\documentclass[11pt,reqno]{amsart}

\renewcommand{\Im}{\operatorname{Im}}

\usepackage{amssymb}
\usepackage{amsthm}
\usepackage{amsxtra}

\usepackage{tikz}
\usepackage{fancyhdr}
\usepackage{caption}

\usepackage{graphicx}
\usepackage{color}
\usepackage{amsmath}
\usepackage{listings} 
\usepackage{float}
\usepackage{cases} 
\usepackage{threeparttable}
\usepackage{dcolumn}
\usepackage{multirow}
\usepackage{booktabs}
\usepackage{comment}
\usepackage{varwidth}
\usepackage{mathtools}
\usepackage{subcaption}

\usepackage{cancel}

\newcommand{\R}{\mathbb R}

\newcommand\norm[1]{\left\lVert#1\right\rVert}
\newcommand{\vertiii}[1]{{\left\vert\kern-0.25ex\left\vert\kern-0.25ex\left\vert #1 
    \right\vert\kern-0.25ex\right\vert\kern-0.25ex\right\vert}}

\newtheorem{theorem}{Theorem}[section]

\newtheorem{proposition}[theorem]{Proposition}
\newtheorem{lemma}[theorem]{Lemma}
\newtheorem{corollary}[theorem]{Corollary}

\theoremstyle{remark}

\newtheorem{remark}[theorem]{Remark}

\numberwithin{equation}{section}

\title[Blow-up in 1D combined NLS]{Blow-up criteria for the 1D NLS with combined nonlinearities}

\author[Alex D. Rodriguez]{Alex D. Rodriguez}
\address{Department of Mathematics  \& Statistics\\Florida International University,  Miami, FL, USA}
\curraddr{}
\email{arodr1128@fiu.edu}

\subjclass{35B44, 35Q40, 35Q55, 35A21}

\keywords{nonlinear Schr\"odinger equation, combined nonlinearities, exponential nonlinearity, blow-up criteria}

\begin{document}

\begin{abstract}
    In this paper we develop two different types of criteria for the finite time blow-up solutions to the combined nonlinear Schr\"odinger equation in 1D. The first one is a negative energy criterion developed for triple combined nonlinearity and then for a (convergent) infinite sum of combined nonlinearities, including an exponential nonlinearity. The second one is a positive energy criterion for the double nonlinearity with the defocusing-focusing coefficients. We also provide examples of initial data that satisfy either of the criteria.
\end{abstract}

\maketitle

\section{Introduction}\label{S:Intro}

We study blow-up criteria for the initial value problem for the 1D nonlinear Schr\"odinger equation
\begin{equation}{\label{ClassicNLS}}
    \begin{cases}
        &iu_{t} + u_{xx} + \mathcal{N}(u)u = 0, \quad (x,t) \in \R\times \R,\\
        &u(x,0) = u_{0}, \\
    \end{cases}
\end{equation}
where $\mathcal{N}(u)$ is a nonlinear term, typically presented as $\mathcal{N}(u) = \epsilon|u|^\alpha$ with $\epsilon \in
\mathbb{C}$ and $\alpha > 0$. In this paper, we consider a linear combination of several such nonlinear terms, including an {\it infinite} sum. With the single nonlinearity, the equation \eqref{ClassicNLS} formally conserves mass and energy 
\begin{equation*}\label{E:Mass_single}
	M[u](t) = \int_{-\infty}^\infty|u(x,t)|^2 \, dx  = M[u_0],
\end{equation*}

\begin{equation*}\label{E:Energy_single}
	E[u](t) = \frac{1}{2}\int_{-\infty}^\infty|u_x(x,t)|^2dx - \frac{\epsilon}{\alpha + 2}\int_{-\infty}^\infty|u(x,t)|^{\alpha + 2} \, dx = E[u_0].
\end{equation*}

We say a solution $u(x,t)$ \textit{blows-up} in finite time, if for a time $0<T^*<\infty$, $\lim_{t\to T^*} \| u_x(t) \|_{L^2} = \infty$. To study this behavior, we use the so-called \textit{virial identity}, for which we first define a variance 
\begin{equation}\label{E:Variance_single}
    V[u](t) =  \int_{-\infty}^\infty|xu(x,t)|^2 \, dx =\|xu\|_{L^2}^2.
\end{equation}
For a finite variance $V[u](t) < \infty$, the solution $u(x,t)$ satisfies the virial identity 
\begin{align}\label{E:singleVar}
    V_{tt}[u](t) = 4\alpha E[u] + 2(4-\alpha) \| u_x(t) \|_{L^2}^2.
\end{align}
(For results on infinite variance, see Ogawa-Tsutsumi \cite{OT1991} for a radial case and negative energy, Holmer-Platte-Roudenko \cite{HRP2010} for a radial case and positive energy.)

From \eqref{E:singleVar}, we obtain the following cases: 
\begin{enumerate}
    \item[(1)] if $\alpha \geq 4$ and $E[u] \leq 0$, then $V_{tt} \leq 0$ and finite time blow up can be deduced, see \cite{VPT1971,Glassey1977};
    \item[(2)] if $\alpha > 4$ and $E[u] > 0$, then the sign of $V_{tt}$ is unclear and further study is required, see \cite{HR2007, Lushnikov1995, HRP2010, DR2015};
    \item[(3)] if $\alpha < 4$, then \eqref{ClassicNLS} is mass-subcritical and is globally well-posed, e.g., in $H^1(\R)$, \cite{GV1979, Caz-book}.
\end{enumerate}

For case (3), there are a variety of well-posedness results in the literature with many local and global results available in $H^s$ for $s\leq 1$ (for example, see \cite{GV1979}, \cite{GV1985}, \cite{K1987}, \cite{T1987}, \cite{B1993}, \cite{Iteam2002}). Global behavior for the NLS of this type is well understood for different powers of a single nonlinearity (in higher dimensions as well), see, for instance, books \cite{LPIntroToNDE2014}, \cite{Caz-book}, \cite{TaoBook}, \cite{SS1999}, \cite{F2015} for various local and global results, including scattering, blow-up and long-term dynamics for the focusing NLS.

What we have yet to introduce is the case of combined nonlinear terms,  $\mathcal{N}(u)=\sum_{j}\lambda_j|u|^{\alpha_j}$, as results for many nonlinear terms are relatively sparse in the literature. Well-posedness, scattering and negative energy blow-up for {\it two} nonlinear terms have been studied in depth in \cite{TVZ2005}. Some of the physically relevant combined nonlinearities such as cubic-quintic have been investigated more thoroughly. In that case (of the cubic-quintic nonlinearity), unstable solutions described as \textit{bubbles} in Bose condensates were studied in \cite{BM1988}; scattering was studied in 3D in \cite{KMV2021} and in the 2D radial case in \cite{M2021}; the orbital stability and scattering in two- and three-dimensional settings in \cite{CS2021} and the instability of solitary waves in 2D and 3D in \cite{CKS2023}. 
For more than two nonlinear terms, well-posedness, solitary waves and other global results are investigated very little, see recent works on {\it triple} nonlinearities \cite{Liu2021}, \cite{Phan2024}. For a very recent result, on the well-posedness and scattering in weighted Sobolev spaces for an {\it infinite} sum of nonlinear terms for the NLS \eqref{ClassicNLS} equation, see \cite{RRR2024}. This paper seeks to be a natural extension to \cite{RRR2024}, investigating blow-up, since such solutions were observed numerically in \cite[Section 6]{RRR2024}. 

The main goal of this paper is to obtain blow-up criteria for solutions to the NLS equation \eqref{ClassicNLS} with {\it finitely} or {\it infinitely} many combined nonlinear terms, including some known analytical nonlinearities, such as exponential. We obtain criteria for negative energy and finite variance in that case. We also explore blow-up criteria for positive energy solutions and obtain a blow-up criterion for the NLS with a double combined potential, i.e., two nonlinear terms. 

We utilize two different methods for developing criteria for finite time blow-up solutions. The {\it first} one is a classical method developed by Vlasov-Petrishchev-Talanov \cite{VPT1971} and by Glassey \cite{Glassey1977}, where the convexity argument relies on the energy of the NLS solution being negative. This method (with various interpolations) was also used by Tao-Visan-Zhang in \cite{TVZ2005} for the combined (two terms) NLS equation, we adapt it here first for the three nonlinear terms and then for an exponential nonlinearity, written as an infinite sum of combined nonlinearities. Motivated by the exponential nonlinearity, we then generalize that case to a finite or infinite sum of \textit{focusing} nonlinearities. The {\it second} method was proposed by Lushnikov \cite{Lushnikov1995}, where he observed that writing the solution in polar representation allows one to include the variance in the second derivative of the virial identity, and using some physical mechanics interpretation, deduce a blow-up criteria for positive energy. This method was further developed by Holmer-Platte-Roudenko in \cite{HRP2010} into an actual criterion and modifying it via some new interpolation inequality (with finding a sharp constant), obtaining another criterion for finite time blow-up in the cubic NLS in 3D, and later in a more general context by Duyckaerts-Roudenko in \cite{DR2015}.
\smallskip

We begin by presenting the first two theorems, which rely on the assumption that the energy is negative (for a precise definition of the energy in this case, see Section \ref{S:MEV}, also for conciseness, we abbreviate the mass $M[u_0]$ and the energy $E[u_0]$ as $M$ and $E$, correspondingly).

\begin{theorem}[Negative energy blow-up for three nonlinear terms]\label{NegE_Bup_3term}
Let $u_0 \in H^1(\mathbb{R})$ such that $V[u_0] < \infty$. Let $u(x,t)$ be a solution to \eqref{ClassicNLS} with the given initial condition $u_0$ and the nonlinearity
$$
\mathcal{N}(u) = -\lambda_1 |u|^{\alpha_1}-\lambda_2 |u|^{\alpha_2} -\lambda_3 |u|^{\alpha_3},
$$
such that $\lambda_3 < 0$, $\alpha_3>4$ (i.e., the largest nonlinearity is always focusing and greater than 4), $\lambda_1, \lambda_2 \in \R$ and $\alpha_1,\alpha_2 \in \R^+$. 
For $\theta_1 \in (\frac4{\alpha_3},1)$ and $\theta_2 \in (0,1)$, let $\delta>0$ satisfy conditions \eqref{E:delta-2B}, \eqref{E:delta-2C}, \eqref{E:delta-3C}, \eqref{E:delta-4A}, \eqref{E:delta-4B}, \eqref{E:delta-4C}, correspondingly, for a given range of $\lambda$'s and $\alpha$'s as indicated in the table below with the respective condition (in the third column) satisfied, where $C(\delta)$ is given by \eqref{E:C_delta}:
\begin{table}[h!]
    \centering
    \renewcommand{\arraystretch}{1.3}
    \begin{tabular}{c|c|c}
     Coefficients & Powers & Condition \\
    \hline
        $\lambda_1, \lambda_2 > 0$ & $0< \alpha_1 < \alpha_2 < \alpha_3$ & $E < 0$ \\
        $\lambda_1, \lambda_2 < 0$ & $4< \alpha_1 < \alpha_2 < \alpha_3$& $E < 0$ \\
        $\lambda_1, \lambda_2 < 0$ & $\alpha_1 < 4 < \alpha_2 < \alpha_3$& $ E < -\frac{|\lambda_1|C(\delta)(\alpha_2-\alpha_1)}{\alpha_2(\alpha_1 + 2)}M$ \\
        $\lambda_1, \lambda_2 < 0$ & $\alpha_1 < \alpha_2 < 4 < \alpha_3$ & $ E  < -\frac{1}{\alpha_3 \theta_1}\left[\frac{|\lambda_1|C(\delta_1)(\alpha_3\theta_1 - \alpha_1)}{\alpha_1 + 2}+\frac{|\lambda_2|C(\delta_2)(\alpha_3\theta_1 - \alpha_2)}{\alpha_2 + 2} \right]M $\\
        $\lambda_1 > 0, \lambda_2 < 0$ & $4 < \alpha_1 < \alpha_2 < \alpha_3$& $E < 0$ \\
        $\lambda_1 > 0, \lambda_2 < 0$ & $\alpha_1 < 4 < \alpha_2 < \alpha_3$& $E < 0$ \\
        $\lambda_1 > 0, \lambda_2 < 0$ & $\alpha_1 < \alpha_2 < 4 < \alpha_3$& $E  < -  \frac{|\lambda_2|C(\delta)(\alpha_3\theta_1-\alpha_2)}{\alpha_3\theta_1(\alpha_2 + 2)}M$\\
        $\lambda_1 < 0, \lambda_2 > 0$ & $4 < \alpha_1 < \alpha_2 < \alpha_3$& $E < -\frac{\lambda_2(\alpha_2 - \alpha_1)C(\delta)}{\alpha_1(\alpha_2 + 2)}M$ \\
        $\lambda_1 < 0, \lambda_2 > 0$ & $\alpha_1 < 4 < \alpha_2 < \alpha_3$& $E  < - \frac{|\lambda_1|((\alpha_3-\alpha_2)\theta_2 +(\alpha_2 - \alpha_1))C(\delta)}{\left[(\alpha_3-\alpha_2)\theta_2 + \alpha_2\right](\alpha_1 + 2)}M$\\
        $\lambda_1 < 0, \lambda_2 > 0$ & $\alpha_1 < \alpha_2 < 4  < \alpha_3$& $ E   < - \frac{|\lambda_1|C(\delta)(\alpha_3-\alpha_1)}{\alpha_3(\alpha_1 + 2)}M$
    \end{tabular}
\end{table}
\newpage

Then the solution $u(x,t)$ blows up in finite time, i.e., there exists a finite time $T^\ast>0$ such that
    $$\lim_{t\to T^*} \| u_x(t) \|_{L^2} = \infty.$$
\end{theorem}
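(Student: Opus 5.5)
We use the Vlasov--Petrishchev--Talanov--Glassey convexity argument, adapted to three nonlinear terms. With $\mathcal N(u)=-\sum_j\lambda_j|u|^{\alpha_j}$, the conserved energy is
\[
E=\tfrac12\|u_x\|_{L^2}^2+\sum_{j=1}^3\tfrac{\lambda_j}{\alpha_j+2}\|u\|_{L^{\alpha_j+2}}^{\alpha_j+2},
\]
and mass is also conserved. The standard computation (multiply by $x^2\bar u$ and by $x\bar u_x$, then integrate by parts) gives the virial identity
\[
V_{tt}=8\|u_x\|_{L^2}^2+\sum_{j=1}^3\frac{4\alpha_j\lambda_j}{\alpha_j+2}\|u\|_{L^{\alpha_j+2}}^{\alpha_j+2}.
\]
The goal is to show $V_{tt}\le -c<0$ for all $t$ in the lifespan. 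Then $V$ is a nonnegative function lying below a downward parabola, so the solution cannot exist globally. The Heisenberg-type inequality $\|u\|_{L^2}^2\le 2\|xu\|_{L^2}\|u_x\|_{L^2}$ then forces $\|u_x\|_{L^2}\to\infty$ at a finite $T^*$.

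The first step is to eliminate the kinetic term using the energy. We write $8\|u_x\|^2=16E-\sum_j\frac{16\lambda_j}{\alpha_j+2}\|u\|^{\alpha_j+2}_{\alpha_j+2}$, or more flexibly we subtract $4\alpha_3\theta_1E$ (or $4\alpha_2E$, $4\alpha_1E$, depending on the case). This yields
\[
V_{tt}=4\beta E+(8-2\beta)\|u_x\|^2+\sum_j\frac{4\lambda_j(\alpha_j-\beta)}{\alpha_j+2}\|u\|_{\alpha_j+2}^{\alpha_j+2},
\]
with $\beta\ge4$ chosen per case. For example, take $\beta=\alpha_3\theta_1>4$ with $\theta_1\in(4/\alpha_3,1)$. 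Then the coefficient of $\|u_x\|^2$ is nonpositive, and the $\lambda_3$ term is $\le0$ since $\lambda_3<0$ and $\alpha_3>\beta$. Each remaining $\lambda_j$ term is then handled by sign. A term with $\lambda_j>0$ and $\alpha_j<\beta$ is nonpositive, so it is harmless. A focusing term with $\alpha_j>\beta$ also has a good sign. The rows with $E<0$ alone correspond exactly to the choices of $\beta$ for which all terms have good sign. In rows such as $\lambda_1<0,\lambda_2>0$ with $4<\alpha_1<\alpha_2$, we take $\beta=\alpha_1$ or a convex combination $\beta=(\alpha_3-\alpha_2)\theta_2+\alpha_2$ to place $\beta$ between the exponents, which makes the good signs work out.

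The bad terms are the $|\lambda_j|\|u\|_{\alpha_j+2}^{\alpha_j+2}$ with $\alpha_j<\beta$ and $\lambda_j<0$. We control them by interpolating between mass and a term with good sign. By Hölder (or Young), for $\alpha_j<\alpha_k$,
\[
\|u\|_{\alpha_j+2}^{\alpha_j+2}\le \delta\|u\|_{\alpha_k+2}^{\alpha_k+2}+C(\delta)\|u\|_{L^2}^2,
\]
since $|u|^{\alpha_j+2}\le\delta|u|^{\alpha_k+2}+C(\delta)|u|^2$ pointwise, with $C(\delta)$ as in \eqref{E:C_delta}. We pick $\delta$ so that $\delta$ times the bad coefficient is absorbed by the good coefficient of the higher power ($\lambda_3$, or $\lambda_2$ if focusing). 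These absorption inequalities are \eqref{E:delta-2B}--\eqref{E:delta-4C}. If the good term present is only $\|u_x\|^2$, one instead uses Gagliardo--Nirenberg, but for $\alpha_j<4$ the pointwise route with $\alpha_3$ suffices. What remains is $V_{tt}\le 4\beta E+c\,|\lambda_j|C(\delta)M$. Requiring this to be negative gives exactly the thresholds in the table, for example $E<-\frac{|\lambda_1|C(\delta)(\alpha_3-\alpha_1)}{\alpha_3(\alpha_1+2)}M$ when $\beta=\alpha_3$.

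The main obstacle is the bookkeeping across the ten sign/exponent cases. In each case one must choose $\beta$ (and $\theta_1,\theta_2$) so that only subcritical defocusing-type terms are bad, and then check that the absorption constraints on $\delta$ are simultaneously satisfiable. The most delicate rows are those where two bad terms are present, such as $\lambda_1,\lambda_2<0$ with $\alpha_1<\alpha_2<4$. There both must be absorbed by the single $\lambda_3$ term, splitting its coefficient between $\delta_1$ and $\delta_2$, which requires $\theta_1<1$ to leave strict room.
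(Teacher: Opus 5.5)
Your proposal is correct and is essentially the paper's proof. The paper's case-by-case energy substitutions are exactly your weights $\beta\in\{\alpha_1,\alpha_2,\alpha_3,\alpha_3\theta_1,(\alpha_3-\alpha_2)\theta_2+\alpha_2\}$, followed by absorbing bad terms via $a^p\le C(\delta)a^2+\delta a^q$ into a higher-power term of good sign, which gives $V_{tt}\le 4\beta E+c\,C(\delta)M<0$. The paper closes through $y'\ge\epsilon\|u_x\|_{L^2}^2$ and the Tao--Visan--Zhang argument, but its lower bound on $y'$ carries the same positive constant, so your Glassey-type ending works equally well.

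Two small bookkeeping fixes are needed. With $\beta=\alpha_1$ in row 4A, the bad term is the defocusing $\lambda_2$ term lying above $\beta$, not a focusing term below $\beta$, and it is absorbed by $\lambda_3$. With $\beta=\alpha_3$ in row 4C, the $\lambda_3$ term vanishes, so the $\lambda_1$ term must be absorbed by the defocusing $\lambda_2$ term, which has good sign since $\alpha_2<\beta$; this is what \eqref{E:delta-4C} encodes.
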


Before presenting the next result, it is worth noting that certain \textit{analytic} nonlinearities such as the exponential, $\mathcal{N}(u) = e^{|u|}$, may be represented as an infinite sum of terms through a Taylor expansion. To our knowledge, this is the first result on the blow-up criteria for the NLS with an infinite sum of nonlinear terms. We use ideas from the proof of Theorem \ref{NegE_Bup_3term} to obtain the following criterion for exponential nonlinearity. 

\begin{theorem}[Negative energy blow-up for exponential nonlinearity]\label{NegE_Bup_Exp}
Let $u_0 \in H^1(\mathbb{R})$ such that $V[u_0] < \infty$ and the following condition holds 
\begin{equation}\label{E:condition-exp}
E[u_0]+ \kappa M[u_0] < 0,
\end{equation}
where $\kappa$ is defined in \eqref{E:kappa}.

Let $u(x,t)$ be a solution to \eqref{ClassicNLS} with the initial condition $u_0$  and nonlinearity
$$
\mathcal{N}(u) = \sum_{k=0}^\infty \frac{|u|^k}{k!} \equiv e^{|u|}.
$$
Then the solution $u(x,t)$ blows up in finite time. 
\end{theorem}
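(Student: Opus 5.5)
We follow the Vlasov--Petrishchev--Talanov--Glassey convexity argument: show that $V_{tt}[u](t)$ is bounded above by a negative constant on the lifespan. Since $V\ge 0$, this forces the maximal existence time to be finite, and by the $H^1$ blow-up alternative we get $\|u_x(t)\|_{L^2}\to\infty$. Write the nonlinearity as $\mathcal N(u)=\sum_{k\ge0}|u|^k/k!$, i.e. $\lambda_k=-1/k!$ with $\alpha_k=k$ in the notation of Theorem \ref{NegE_Bup_3term}, so every term is focusing. The conserved energy is then
\[
E=\tfrac12\|u_x\|_{L^2}^2-\sum_{k\ge0}\frac{1}{k!\,(k+2)}\int|u|^{k+2}\,dx .
\]
Applying the virial computation termwise to $\sum_k\lambda_k|u|^{\alpha_k}$ gives
\[
V_{tt}=8\|u_x\|_{L^2}^2-\sum_{k\ge0}\frac{4k}{k!\,(k+2)}\int|u|^{k+2}\,dx .
\]
The $k=0$ term is just a phase rotation $e^{it}$ and contributes nothing; here $\int|u|^2=M$. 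This step has to be justified for an infinite sum: the series converges for $u\in H^1\subset L^\infty$ because $\sum_k\|u\|_\infty^k\|u\|_2^2/k!\le e^{\|u\|_\infty}M$. The virial identity itself is obtained by the usual truncation of the weight $x^2$ together with dominated convergence, as in Section \ref{S:MEV}.

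Next we eliminate $\|u_x\|^2$ using the energy, $8\|u_x\|^2=16E+\sum_k\frac{16}{k!(k+2)}\int|u|^{k+2}$. This yields
\[
V_{tt}=16E+\sum_{k\ge0}\frac{4(4-k)}{k!\,(k+2)}\int|u|^{k+2}\,dx .
\]
Every term with $k\ge5$ is nonpositive and can be dropped (or partially kept). The problem is the finitely many mass-subcritical terms $k=0,1,2,3$ with positive coefficient; the $k=4$ term vanishes.

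These terms are handled exactly as in the proof of Theorem \ref{NegE_Bup_3term}. By the Gagliardo--Nirenberg inequality $\|u\|_{p}^{p}\le C\|u_x\|_2^{(p-2)/2}\|u\|_2^{(p+2)/2}$ and Young's inequality with parameter $\delta$, each $\int|u|^{k+2}$ with $k<4$ is bounded by $\delta\int|u|^{m+2}+C(\delta)M$, for a fixed supercritical $m$ (e.g. $m=5$ or $6$). Alternatively one can interpolate in Lebesgue spaces between $L^2$ and $L^{m+2}$, which is how the constant $C(\delta)$ in \eqref{E:C_delta} arises. Choosing $\delta$ small enough that $\frac{4(m-4)}{m!(m+2)}$ absorbs the $\delta$-multiples, we arrive at
\[
V_{tt}\le 16\bigl(E+\kappa M\bigr),\qquad \kappa=\frac14\sum_{k=0}^{3}\frac{(4-k)\,C_k(\delta)}{k!\,(k+2)} ,
\]
which is how we expect \eqref{E:kappa} is defined. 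The $k=0$ term contributes exactly $\frac{16}{2}M$ with no interpolation needed. Under \eqref{E:condition-exp}, $V_{tt}\le-c<0$, so $V(t)\le V(0)+V_t(0)t-\tfrac c2t^2$ becomes negative in finite time. This is a contradiction, so blow-up occurs.

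The main obstacle is the rigorous justification of the infinite-sum virial identity and energy conservation. This requires uniform control of the tail $\sum_{k\ge5}$ in $L^1$, which follows from the $L^\infty$ bound via $H^1$ on the lifespan, and local well-posedness for the analytic nonlinearity, for which we would cite \cite{RRR2024}. A secondary point is matching the constants precisely to $\kappa$ in \eqref{E:kappa}, which only affects bookkeeping.
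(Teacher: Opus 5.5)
Your argument is correct, and it takes a different route from the paper. You use the energy to eliminate $\|u_x\|_{L^2}^2$ completely, obtaining $V_{tt}=16E+\sum_k\frac{4(4-k)}{k!(k+2)}\|u\|_{L^{k+2}}^{k+2}$. The critical term $k=4$ then drops out, only $k=1,2,3$ need the $L^2$--$L^{m+2}$ interpolation, and a single supercritical term absorbs all the errors.

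The paper instead solves the energy for the $k=5$ term $\|u\|_{L^7}^7$. This leaves $y'=-\frac14V_{tt}$ with a residual $\frac12\|u_x\|_{L^2}^2$ and coefficients $\frac{k-5}{(k+2)k!}$. So four terms $k=1,\dots,4$ must be interpolated into $L^8$ and absorbed by the $k=6$ term, giving $y'\ge\frac12\|u_x\|_{L^2}^2-5(E+\kappa M)$.

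The paper's version keeps a coercive gradient term. That is exactly the input $y'\ge\epsilon\|u_x\|_{L^2}^2$ for the Tao--Visan--Zhang ODE argument and the bound \eqref{E:time} on $T^*$, and it follows the template of Theorem \ref{NegE_Bup_3term} and Corollary \ref{Generalization}. Yours is shorter, needs only Glassey's convexity argument plus the $H^1$ blow-up alternative, and gives a noticeably smaller constant.

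There are two points to fix.

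First, \eqref{E:kappa} is not your formula. The paper's constant is $\kappa=\frac15\bigl(\sum_{k=1}^4\frac{5-k}{(k+2)k!}C_k+\frac52\bigr)\approx 14.797$, with $C_k$ coming from the specific $\delta_k$ chosen there. So the ``bookkeeping'' you defer is a required step: you must exhibit an admissible choice for which your constant is at most $14.797$. Then $E+\kappa M<0$ implies your hypothesis, since $M>0$. This does hold with room to spare. Take $m=5$ and $4\delta_1=\delta_2=\frac{2}{15}\delta_3=\frac1{630}$, so the errors total $\frac1{210}\|u\|_{L^7}^7$. The sharp constants \eqref{E:C_delta} are then about $3.8$, $23.9$ and $143.1$, and your $\kappa\approx(8+58.2)/16\approx 4.1$.

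Second, Gagliardo--Nirenberg plus Young does not give $\delta\int|u|^{m+2}+C(\delta)M$. It gives $\delta\|u_x\|_{L^2}^2+C(\delta)M^{(k+4)/(4-k)}$. That does not have the form you claim, and the $\delta\|u_x\|_{L^2}^2$ term cannot be absorbed once you have eliminated the gradient term. Only your ``alternative'' works: integrate the pointwise inequality of Lemma \ref{L:1} and Remark \ref{R:2.2} with $a=|u(x)|$.
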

\begin{remark}
A more precise bound for \eqref{E:condition-exp} can be given by
    \begin{equation*}
    E[u_0] +  \frac15\left(\sum_{k=1}^4\frac{ (5-k)}{(k+2)k!}C_k + \frac{5}{2} \right)M[u_0] < 0,
    \end{equation*}
    for $C_k  = C(\delta_k)> 0$ given in \eqref{E:1}-\eqref{E:2a}.
\end{remark}

A straightforward generalization of blow-up criteria from Theorem \ref{NegE_Bup_Exp} to a finite sum or even an infinite sum of nonlinear terms in $\mathcal N(u)$ is for the case when all coefficients in the sum are non-negative (and provided the sum converges), which we state next. 

\begin{corollary}[Negative energy blow-up for finite or  infinite number of nonlinear terms]\label{Generalization}
Consider the equation \eqref{ClassicNLS} with nonlinearity $$
\mathcal{N}(u) = \sum_{k=0}^N d_k|u|^{\alpha_k},
$$ 
where $0 < N \leq \infty$ (i.e., finite or infinite number of terms).
Suppose that there exists $k^\ast \in \mathbb{N}$ such that $\alpha_{k^\ast} > 4$ and the corresponding coefficient $d_{k^\ast} > 0$ (and the rest of the coefficients $d_k \geq 0$). 

Let $u_0 \in H^1(\mathbb{R})$ be such that $V[u_0] < \infty$ and 
\begin{equation}\label{E:condition-infinite}
E[u_0]  + \left( \frac1{\alpha_{k^*}}\sum_{k=1}^{k^*-1}d_k \frac{ \alpha_{k^*}-\alpha_k }{\alpha_k + 2 }C_k \right) M[u_0] < 0,
\end{equation}
where $C_k$ is defined in \eqref{E:1Inf}-\eqref{E:2inf}. Then the solution $u(x,t)$ to the equation \eqref{ClassicNLS} with given $\mathcal N(u)$ and the initial $u_0$, blows up in finite time. 
\end{corollary}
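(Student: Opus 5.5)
We follow the Glassey convexity argument, as in the proofs of Theorems \ref{NegE_Bup_3term} and \ref{NegE_Bup_Exp}, and show that $V_{tt}$ stays below a negative constant on the lifespan of the solution. Throughout, write $\|\cdot\|_p$ for the $L^p(\R)$ norm. With $\mathcal N(u)=\sum_k d_k|u|^{\alpha_k}$ and all $d_k\ge0$, the conserved energy is
$$E=\tfrac12\|u_x\|_2^2-\sum_k \tfrac{d_k}{\alpha_k+2}\|u\|_{\alpha_k+2}^{\alpha_k+2}.$$
The virial identity (computed term by term, as in Section \ref{S:MEV}) is
$$V_{tt}=8\|u_x\|_2^2-\sum_k \tfrac{4d_k\alpha_k}{\alpha_k+2}\|u\|_{\alpha_k+2}^{\alpha_k+2}.$$
Let $\beta=\alpha_{k^*}>4$. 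Eliminating $\|u_x\|_2^2$ through the energy gives
$$V_{tt}=4\beta E-(2\beta-8)\|u_x\|_2^2+\sum_k \tfrac{4d_k(\beta-\alpha_k)}{\alpha_k+2}\|u\|_{\alpha_k+2}^{\alpha_k+2}.$$
After ordering the indices so that the powers increase, every term with $\alpha_k\ge\beta$ has a nonpositive coefficient, since $d_k\ge0$. We drop these terms; they only help. This is the point where the sign assumption on all $d_k$ is used, and it is also why the infinite tail causes no difficulty.

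The remaining terms are those with $k<k^*$, i.e.\ $\alpha_k<\beta$. For each of them we use the Gagliardo--Nirenberg inequality
$$\|u\|_{\alpha_k+2}^{\alpha_k+2}\le C_{GN}\,\|u_x\|_2^{\alpha_k/2}\,M^{(\alpha_k+4)/4},$$
and then Young's inequality with parameter $\delta_k$, exactly as in \eqref{E:1Inf}--\eqref{E:2inf}. This yields
$$\|u\|_{\alpha_k+2}^{\alpha_k+2}\le \delta_k\|u_x\|_2^2+C_k M,$$
with $C_k=C(\delta_k)$. The precise power of $M$ is the one fixed in the paper's definition of $C_k$, possibly absorbing a mass factor into $C_k$. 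Substituting, we get
$$V_{tt}\le 4\beta\Big(E+\tfrac1\beta\sum_{k<k^*}\tfrac{d_k(\beta-\alpha_k)}{\alpha_k+2}C_kM\Big)-\Big(2\beta-8-\sum_{k<k^*}\tfrac{4d_k(\beta-\alpha_k)}{\alpha_k+2}\delta_k\Big)\|u_x\|_2^2.$$
We now choose the $\delta_k$ small enough that the last bracket is nonnegative; this is possible because $\beta>4$. For infinitely many $k$ there is a subtlety, but the set $\{k<k^*\}$ is finite once the powers are ordered, so no summability issue arises. The term $k=0$ with $\alpha_0=0$ (a linear potential term) contributes only a multiple of $M$, or nothing, depending on the paper's convention in \eqref{E:condition-infinite}, where the sum starts at $k=1$. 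We then discard that bracket and conclude $V_{tt}\le 4\beta\,(E+\text{const}\cdot M)<0$ by \eqref{E:condition-infinite}. This constant is conserved in time.

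With $V_{tt}\le -c<0$, the function $V(t)$ is bounded above by a downward parabola. It would therefore become negative in finite time, which is impossible since $V\ge0$. So the maximal time $T^*$ is finite, and by the $H^1$ blow-up alternative $\|u_x(t)\|_2\to\infty$ as $t\to T^*$.

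The main obstacle I expect is justifying the rearrangement and the term-by-term virial identity when $N=\infty$. One must check that $\sum_k d_k\|u\|_{\alpha_k+2}^{\alpha_k+2}$ converges for $H^1$ solutions, which uses $\|u\|_\infty\lesssim\|u\|_{H^1}$ and convergence of $\sum_k d_k s^{\alpha_k}$. One must also check that the variance identity holds, which follows by truncation or weighted-regularization as in Theorem \ref{NegE_Bup_Exp} and \cite{RRR2024}, and that dropping the tail $\alpha_k\ge\beta$ is legitimate. Once that is in place, the estimate above is routine.
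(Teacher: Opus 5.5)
There is a genuine gap in how you control the terms with $k<k^*$. Having dropped every term with $\alpha_k\ge\beta$, the only coercive quantity left is $(2\beta-8)\|u_x\|_{L^2}^2$, so you must go through Gagliardo--Nirenberg and Young. Young's inequality applied to $\|u_x\|_{L^2}^{\alpha_k/2}M^{(\alpha_k+4)/4}$, with a small weight on $\|u_x\|_{L^2}^2$, requires $\alpha_k<4$. The corollary assumes nothing of the sort for indices below $k^*$.

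In the motivating example $e^{|u|}$, whatever $k^*\ge5$ you pick, the critical term $|u|^4/4!$ lies below it. There Gagliardo--Nirenberg gives $\|u\|_{L^6}^6\le C_{GN}M^2\|u_x\|_{L^2}^2$ with no free small parameter. Your bracket is then nonnegative only under a smallness condition on the mass (this single term already needs $d_4C_{GN}M^2<3$), which is not among the hypotheses. If $4<\alpha_k<\beta$ for some $k<k^*$ (allowed, since $k^*$ need not be the first supercritical index), the power $\|u_x\|_{L^2}^{\alpha_k/2}$ cannot be absorbed at all.

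Even when all lower powers are subcritical, Young produces $C(\delta_k)M^{(\alpha_k+4)/(4-\alpha_k)}$ rather than $C_kM$, so you prove a different criterion. ``Absorbing a mass factor into $C_k$'' makes the constant depend on $u_0$, and it is then no longer the $C_k$ of \eqref{E:1Inf}--\eqref{E:2inf}.

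The missing idea is that \eqref{E:1Inf} is not a Gagliardo--Nirenberg bound. It is the pointwise interpolation of Lemma \ref{L:1} and Remark \ref{R:2.2}, $|u|^{\alpha_k+2}\le C(\delta_k)|u|^2+\delta_k|u|^{\alpha_{k^*+1}+2}$, integrated in $x$. It holds for every $0<\alpha_k<\alpha_{k^*+1}$, regardless of how $\alpha_k$ compares with $4$, and it is linear in $M$.

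To use it you must not discard the whole tail. Keep the term $\frac{4d_{k^*+1}(\beta-\alpha_{k^*+1})}{\alpha_{k^*+1}+2}\|u\|_{L^{\alpha_{k^*+1}+2}}^{\alpha_{k^*+1}+2}$, which is strictly negative when $d_{k^*+1}>0$. Choose the $\delta_k$ as in \eqref{E:2inf} so that this term swallows all the $\delta_k$-pieces. The coefficient $2\beta-8$ of $\|u_x\|_{L^2}^2$ is then untouched, and you land exactly on \eqref{E:condition-infinite}.

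If no positive term lies above $k^*$, split the $k^*$ term as $\theta+(1-\theta)$ with $4/\alpha_{k^*}<\theta<1$. Substitute the energy only into the $\theta$-part and absorb the $\delta_k$-pieces into the $(1-\theta)$-part, as in Section \ref{Special}. Your final convexity step ($V_{tt}\le -c<0$ contradicting $V\ge0$) is fine once this bound is in place.
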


\begin{remark}
If $\alpha_{k^*}$ is the smallest supercritical power in a given nonlinearity, then \eqref{E:condition-infinite} gives a more optimal condition for that nonlinearity. 
\end{remark}

\begin{remark}
    Corollary \ref{Generalization} is new for $N \geq 4$ as results for $N = 1 \mbox{ and } 2$ are well studied in the literature, and the case $N = 3$ follows from one of the cases in Theorem \ref{NegE_Bup_3term}.
\end{remark}

Our next result differs from the previous two theorems by establishing a criterion for {\it positive} energy blow up, in the spirit of Lushnikov \cite{Lushnikov1995}, Holmer-Roudenko-Platte \cite{HRP2010}, and Duyckaerts-Roudenko \cite{DR2015}. 
For that we recall the variance $V[u](t)$ from \eqref{E:Variance_single} and compute its first derivative $V_t(t) = 4 \mbox{Im} \int x u_x \bar{u} \, dx$. (Here we write the two combined nonlinearities with a more classic notation, denoting the powers by $p$, $q$ and coefficients by $\epsilon_1,\epsilon_2$.) 

\begin{theorem}[Positive energy blow up for two nonlinear terms]\label{DR_Bup}
Suppose that $u_0 \in H^1$ and $V[u_0] < \infty$. Assume that $\epsilon_1 < 0$, $\epsilon_2>0$ (defocusing-focusing case), $1 <  p < q$, and $q>5$. The following is a sufficient condition for blow-up in finite time for the solution $u(x,t)$ to \eqref{ClassicNLS} with positive energy $E[u_0] > 0$ and nonlinearity 
$$
\mathcal{N}(u) = \epsilon_1|u|^{p-1} + \epsilon_2|u|^{q-1}.
$$
\underline{Case $p \neq 5$:}
\begin{equation}\label{blowupcrit_two_pneq5}
   \frac{V_t(0)}{\left( \frac{|\epsilon_1|(q-p)}{(p+1)(q-1)C^*}\right)^{\frac{2}{p-1}}M^{\frac{3p+1}{2(p-1)}}} < 4\sqrt{2E^{\frac{p-5}{p-1}}}g\left( \left( \frac{(p+1)(q-1)C^*E}{|\epsilon_1|(q-p)M^{\frac{p-1}{4} + \frac{p+1}{2}}}\right)^{\frac{4}{p-1}}V[u_0] \right)
\end{equation}
where
\begin{equation}\label{DRIneqConstthm}
    C^* = \left(\frac{3p+1}{2(p+1)}\right)
    \left(\frac{\pi(3p+1)}{p-1}\right)^{\frac{p-1}{4}}\left(\frac{\Gamma(\frac{p+1}{p-1})}{\Gamma(\frac{p+1}{p-1} + \frac{1}{2})}\right)^{\frac{p-1}{2}},
\end{equation}
and

\begin{equation}
f(X)=
    \begin{cases}
        ~~\sqrt{\frac{p-1}{q-p}X^{-\frac{q-5}4} + X - \frac{q-1}{q-p}X^{\frac{5-p}{4}}} & \text{if } 0<X<1,\\
        -\sqrt{\frac{p-1}{q-p}X^{-\frac{q-5}4} + X - \frac{q-1}{q-p}X^{\frac{5-p}{4}}} & \text{if } X\geq1.
    \end{cases}
\end{equation}
\smallskip

\underline{Case $p = 5$:}
\begin{equation}\label{bup_p5}
    \frac{V_t(0)}{2M\sqrt{H}} < \sqrt{\frac{q-5}{q-1}}g\left(\frac{8(q-1)E V[u_0]}{(q-5)M^2H}\right),
\end{equation}
where $H = \left[1 + \frac{|\epsilon_1|M^2}{\pi^2}\right],$  and 

\begin{equation}
f(X)=
    \begin{cases}
        ~~\sqrt{\frac{4}{(q-5)}X^{-\frac{q-5}{4}} + X - \frac{q-1}{q-5}} & \text{if } 0<X< 1,\\
        -\sqrt{\frac{4}{(q-5)}X^{-\frac{q-5}{4}} + X - \frac{q-1}{q-5}} & \text{if } X\geq1.
    \end{cases}
\end{equation}
\end{theorem}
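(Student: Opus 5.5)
We follow the Lushnikov / Holmer--Platte--Roudenko / Duyckaerts--Roudenko scheme: reduce the blow-up question to a one-dimensional "particle in a potential" problem for the variance $V(t)$. The starting point is the virial identity for the double nonlinearity. With $\mathcal N(u)=\epsilon_1|u|^{p-1}+\epsilon_2|u|^{q-1}$, the energy is
\[
E=\tfrac12\|u_x\|_2^2-\tfrac{\epsilon_1}{p+1}\|u\|_{p+1}^{p+1}-\tfrac{\epsilon_2}{q+1}\|u\|_{q+1}^{q+1}.
\]
We will compute
\[
V_{tt}=8\|u_x\|_2^2-\tfrac{4\epsilon_1(p-1)}{p+1}\|u\|_{p+1}^{p+1}-\tfrac{4\epsilon_2(q-1)}{q+1}\|u\|_{q+1}^{q+1}.
\]
Next we eliminate the focusing term using energy conservation, which gives
\[
V_{tt}=4(q-1)E-2(q-5)\|u_x\|_2^2+\tfrac{4|\epsilon_1|(q-p)}{p+1}\|u\|_{p+1}^{p+1}.
\]
Since $q>5$ the gradient term carries a favorable sign. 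The defocusing term is the only obstruction, and it is controlled by interpolation.

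\textbf{Lushnikov step.} Write $u=Ae^{i\varphi}$. Then $\|u_x\|^2=\|A_x\|^2+\|A\varphi_x\|^2$ and $V_t=4\int xA^2\varphi_x$. Cauchy--Schwarz gives $V_t^2\le 16\,V\,\|A\varphi_x\|_2^2$. Moreover $\|A_x\|_2^2\ge \tfrac{M^2}{4V}$ by Heisenberg (the case $M^2\le 4V\|A_x\|^2$).

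For $p\neq5$ we use a sharp Gagliardo--Nirenberg-type inequality in 1D involving the variance, of the form
\[
\|f\|_{p+1}^{p+1}\le C^*\,\|f\|_2^{\frac{3p+1}{2}}\,\|f_x\|_2^{\frac{p-1}{2}}\cdot(\ldots),
\]
with an interpolation weight arranged so that the bound closes in terms of $M$, $V$ and $\|A_x\|$ only. The constant $C^*$ in \eqref{DRIneqConstthm}, built from Gamma functions, suggests the extremizer is a compactly supported power profile obtained by minimizing a weighted functional. I would prove this first as a lemma: rescale the functional, solve the Euler--Lagrange ODE, and evaluate the Beta integrals.

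For $p=5$ the defocusing term is mass-critical. I would instead absorb it into the gradient via the 1D sharp inequality $\|f\|_6^6\le \frac{4}{\pi^2}\|f\|_2^4\|f_x\|_2^2$ (the Weinstein constant). This gives the factor $H=1+|\epsilon_1|M^2/\pi^2$, which replaces $\|A_x\|^2$ by a modified coefficient.

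Combining these estimates, we obtain a differential inequality of the form
\[
V_{tt}\le 4(q-1)E-\frac{V_t^2}{cV}\cdot(\ldots)-F(V),
\]
where $F(V)$ is explicit in $M$ and $E$. Setting $z=V^{\theta}$ with an appropriate power (as in \cite{DR2015}, $z=\sqrt V$ or a power chosen to cancel the $V_t^2/V$ term) yields $z_{tt}\le -U'(z)$ for an explicit potential $U$. Multiplying by $z_t$ then gives the energy-like quantity
\[
\tfrac12 z_t^2+U(z)
\]
as a monotonicity statement.

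After scaling $X=\text{(const)}\cdot V$, the critical curve is exactly the function $f(X)$ in the theorem. We then run the standard HPR/DR trajectory argument. If $z_t(0)$ lies below the separatrix branch $g$ (positive for $X<1$, negative for $X\ge1$, where $X=1$ is the maximum point of $U$), then $z$ never turns around. Indeed, a comparison/continuity argument shows $z_t$ stays below $-$const, so $V$ reaches $0$ in finite time. This is impossible for an $H^1$-global solution, since $M^2\le 4V\|u_x\|^2$; hence $\|u_x\|\to\infty$.

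Verifying the monotone trapping requires the case analysis $X<1$ versus $X\ge1$ and the use of $E>0$. The normalizations then produce the powers of $E$ and $M$ in \eqref{blowupcrit_two_pneq5}.

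The main obstacle I expect is the first interpolation step for $p\neq5$: finding and proving the sharp inequality with constant $C^*$ that bounds $\|A\|_{p+1}^{p+1}$ by a combination of $M$, $V$ and $\|A_x\|$, in exactly the homogeneity needed to close the ODE. I would first try to prove it by minimizing $\|f_x\|^a\|xf\|^b\|f\|^c/\|f\|_{p+1}^{p+1}$, expecting a Barenblatt-type extremizer. The remaining steps are bookkeeping.
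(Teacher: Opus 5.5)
There is a genuine gap, and it starts with a sign error in your reduced virial identity. Eliminating the $\epsilon_2$-term with the energy gives
\[
V_{tt}=4(q-1)E-2(q-5)\|u_x\|_{L^2}^2+\frac{4\epsilon_1(q-p)}{p+1}\|u\|_{L^{p+1}}^{p+1}.
\]
Since $\epsilon_1<0$, the last term equals $-\frac{4|\epsilon_1|(q-p)}{p+1}\|u\|_{L^{p+1}}^{p+1}\le 0$; you wrote it with a plus sign. So the defocusing term is not an obstruction: it pushes $V_{tt}$ down.

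What the argument needs is therefore a \emph{lower} bound on $\|u\|_{L^{p+1}}^{p+1}$, not an upper Gagliardo--Nirenberg-type bound through $\|u_x\|_{L^2}$. The missing ingredient is the derivative-free inequality of Duyckaerts--Roudenko \cite[Prop.~4.3]{DR2015}:
\[
\|u\|_{L^2}\le \tilde C\,(\|xu\|_{L^2}^{\frac{p-1}{2}}\|u\|_{L^{p+1}}^{p+1})^{\frac{2}{3p+1}},
\]
that is, $\|u\|_{L^{p+1}}^{p+1}\ge M^{\frac{3p+1}{4}}/(C^*V^{\frac{p-1}{4}})$. Its sharp constant is exactly the Gamma-function $C^*$ in the statement. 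Your guess of a compactly supported extremizer is right, but for this functional, which contains no $f_x$. Combined with $\|u_x\|_{L^2}^2\ge\frac14(\frac{M^2}{V}+\frac{V_t^2}{4V})$, it gives
\[
VV_{tt}+\frac{q-5}{8}V_t^2\le 4(q-1)EV-\frac{q-5}{2}M^2-\frac{4|\epsilon_1|(q-p)M^{\frac{3p+1}{4}}}{(p+1)C^*}V^{\frac{5-p}{4}}.
\]
Only after this does one substitute $B=V^{\frac{q+3}{8}}$ and run the particle-in-a-potential argument.

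Your route cannot produce the stated criterion, for three reasons.
\begin{itemize}
\item For $p=5$, the Weinstein bound $\|u\|_{L^6}^6\le\frac{4}{\pi^2}M^2\|u_x\|_{L^2}^2$, applied to your wrongly signed term, yields a factor $1-\frac{4|\epsilon_1|M^2}{3\pi^2}$ in front of $\|u_x\|_{L^2}^2$. That needs a smallness condition and is not $H=1+\frac{|\epsilon_1|M^2}{\pi^2}$. In the paper, $H$ arises because the lower bound with $C^*=\frac43\pi^2$ adds $\frac{2|\epsilon_1|(q-5)}{3}\frac{M^4}{C^*V}$ to the Heisenberg contribution $\frac{(q-5)M^2}{2V}$.
\item For $p>5$, the Gagliardo--Nirenberg bound $\|u\|_{L^{p+1}}^{p+1}\le C_{GN}\|u_x\|_{L^2}^{\frac{p-1}{2}}M^{\frac{p+3}{4}}$ is superquadratic in $\|u_x\|_{L^2}$. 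It cannot be absorbed by $-2(q-5)\|u_x\|_{L^2}^2$ at all.
\item For $p\neq5$, the explicit $f(X)$ (with its $X^{\frac{5-p}{4}}$ term) and the $|\epsilon_1|$-dependent normalizations of $X$ and $V_t(0)$ come from the paper discarding the term $-\frac{q-5}{2}M^2$, so that $\partial_B U=0$ can be solved explicitly. The hump of $U$, and hence the attraction of $V$ toward $0$, is then produced entirely by the defocusing term you were trying to neutralize.
\end{itemize}

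The rest of your plan matches the paper: the modulus--phase Heisenberg bound, the power substitution removing $V_t^2/V$, and the separatrix/trajectory argument.
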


{
\begin{corollary}\label{Cor:RealData}
If the initial data $u_0$ are real-valued, then the conditions \eqref{blowupcrit_two_pneq5} and \eqref{bup_p5} respectively reduce to 
\begin{equation}\label{E:RV1}
V[u_0] < \left(\frac{|\epsilon_1|(q-p)M^{ \frac{3p+1}{4}}}{C^*(q-1)(p+1)E}\right)^{\frac{4}{p-1}} \qquad 
\mbox{and} \qquad 
V[u_0] < \frac{(q-5) H M^2}{ 8(q-1) E}.
\end{equation}
\end{corollary}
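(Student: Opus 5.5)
For real-valued data the phase of $u_0$ is constant, so the momentum-type quantity vanishes: $V_t(0)=4\,\mathrm{Im}\int x\,\partial_x u_0\,\bar u_0\,dx=0$, because $\partial_x u_0\,\bar u_0$ is real. The plan is to substitute $V_t(0)=0$ into \eqref{blowupcrit_two_pneq5} and \eqref{bup_p5}. Each condition then becomes
\[
0<g(X_0),
\]
where $X_0$ is the rescaled variance appearing as the argument of $g$ (the scalar prefactors $4\sqrt{2E^{\frac{p-5}{p-1}}}$ and $\sqrt{\frac{q-5}{q-1}}$ are positive, since $E>0$ and $q>5$). Here $g$ is the function written $f$ in the statement.

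The key step is to identify where $g$ is positive. By its definition, $g(X)$ is the nonnegative square root on $0<X<1$ and the nonpositive square root on $X\ge 1$.

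\textbf{Radicand at $X=1$.} In the case $p\neq5$ it equals
\[
\frac{p-1}{q-p}+1-\frac{q-1}{q-p}=\frac{(p-1)+(q-p)-(q-1)}{q-p}=0 .
\]
In the case $p=5$ it equals $\frac{4}{q-5}+1-\frac{q-1}{q-5}=0$.

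\textbf{Radicand on $(0,1)$.} Call the radicand $h(X)$. Its derivative is
\[
h'(X)=-\frac{(p-1)(q-5)}{4(q-p)}X^{-\frac{q-1}{4}}+1-\frac{(q-1)(5-p)}{4(q-p)}X^{\frac{1-p}{4}},
\]
and at $X=1$ the numerator combines to $-(p-1)(q-5)+4(q-p)-(q-1)(5-p)=0$. So $X=1$ is a critical point with $h(1)=0$. The function $h$ is convex there: the terms $X^{-(q-5)/4}$ and $X$ are convex, and the $X^{(5-p)/4}$ term enters with a sign that keeps $h$ convex in either case $p<5$ or $p>5$. Hence $X=1$ is the global minimum and $h$ is strictly positive on $(0,1)$. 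This justifies the definition of $g$ by a real square root, and on $(0,1)$ it gives $g(X)>0$. At $X=1$ we have $g=0$, and for $X>1$ we have $g\le 0$.

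Therefore $0<g(X_0)$ holds exactly when $0<X_0<1$.

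\textbf{Unwinding $X_0<1$.} In the case $p\neq5$,
\[
\left(\frac{(p+1)(q-1)C^*E}{|\epsilon_1|(q-p)M^{\frac{p-1}{4}+\frac{p+1}{2}}}\right)^{\frac4{p-1}}V[u_0]<1 .
\]
Note that $\frac{p-1}{4}+\frac{p+1}{2}=\frac{3p+1}{4}$. Solving for $V[u_0]$ gives the first inequality of \eqref{E:RV1}. In the case $p=5$, $\frac{8(q-1)EV[u_0]}{(q-5)M^2H}<1$ is equivalent to $V[u_0]<\frac{(q-5)HM^2}{8(q-1)E}$. The condition $X_0>0$ is automatic for nontrivial data, since then $V[u_0]>0$.

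The only nontrivial step is the sign analysis of $g$, in particular showing that $h>0$ on $(0,1)$. I would handle it via the convexity and critical-point argument above. Alternatively, one can invoke how $g$ arises in the proof of Theorem \ref{DR_Bup}, as the separatrix of the associated mechanical energy, which is nonnegative with its only zero at $X=1$.
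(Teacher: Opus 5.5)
Your route is the one the paper implicitly intends; the corollary is stated there without a separate proof. You substitute $V_t(0)=0$, observe that the branch $X_0\ge 1$ would require $0<-f(X_0)$ and so cannot hold, and unwind $X_0<1$. That last inequality is literally \eqref{BupA2_p5}/\eqref{BupA2_genp}, and your algebra is correct, including $\frac{p-1}{4}+\frac{p+1}{2}=\frac{3p+1}{4}$. In the paper's mechanical language, positivity of $f$ on $(0,1)$ says that $U$ increases strictly up to its unique maximum $B_{max}$. So a particle at rest with $B(0)<B_{max}$ automatically satisfies the energy condition.

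There is, however, an error in the step you single out as the only nontrivial one. When $p>5$ the exponent $\frac{5-p}{4}$ is negative, so $X^{\frac{5-p}{4}}$ is convex and $-\frac{q-1}{q-p}X^{\frac{5-p}{4}}$ is \emph{concave}, not convex. Indeed
\[
h''(X)=\frac{(p-1)(q-1)}{16(q-p)}\Big[(q-5)X^{-\frac{q+3}{4}}+(5-p)X^{-\frac{p+3}{4}}\Big],
\]
which is negative for large $X$ when $p>5$. So $h$ is not convex on $(0,\infty)$, and your ``global minimum by convexity'' argument fails in that range.

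The conclusion is still true and easy to repair. For $0<X\le 1$ the bracket is at least $(q-p)X^{-\frac{p+3}{4}}>0$, so $h$ is strictly convex on $(0,1]$; together with $h(1)=h'(1)=0$ this gives $h>0$ on $(0,1)$. A cleaner argument, uniform in $1<p<q$ and including $p=5$, uses
\[
(q-p)X^{\frac{q-5}{4}}\,h(X)=(p-1)+(q-p)X^{\frac{q-1}{4}}-(q-1)X^{\frac{q-p}{4}}.
\]
Apply Bernoulli's inequality $Z^{\lambda}\le 1+\lambda(Z-1)$ with $Z=X^{\frac{q-1}{4}}$ and $\lambda=\frac{q-p}{q-1}\in(0,1)$; this is equivalent to Lemma \ref{L:1}. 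It shows the right-hand side is $\ge 0$, with equality only at $X=1$. This gives $g>0$ on $(0,1)$ and $g\le 0$ (and real) on $X\ge 1$, with no case split on $p$.
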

}

\begin{remark}
    It is important to note that when $\epsilon_1 = 0$  the inequality \eqref{bup_p5} reduces to the same criterion as in \cite{DR2015}.
\end{remark}

\begin{remark}
    We specifically consider the \textit{defocusing-focusing} case in Theorem \ref{DR_Bup}, since the criteria for the \textit{focusing-focusing} case would trivially follow by dropping the smaller nonlinear term.
\end{remark}

In Section \ref{S:Examples} we show examples that confirm our findings above: we consider several cases of initial data, which satisfy the criteria developed in Theorems \ref{NegE_Bup_3term}, \ref{NegE_Bup_Exp} and \ref{DR_Bup}. 
In particular, we consider data with negative energy and show examples of solutions to the cNLS with triple nonlinearity and to the NLS with an exponential nonlinearity in \S \ref{S:NegE_Gaussian}, then to the NLS with infinitely many terms in the nonlinearity and polynomially decaying data in \S \ref{S:NegE_Weighted}. 
The numerical simulations suggest that the solutions do indeed blow up in finite time. For the positive energy criterion, in subsection \S \ref{S:PosE_Gaussian} we consider Gaussian data (thus, exponentially decaying at infinity) and in \S \ref{S:PosE_Polynomial} polynomially decaying data (such as an inverse square) and construct examples of data with positive energy, satisfying conditions of Theorem \ref{DR_Bup}, and thus, their time evolutions blow up in finite time. These examples indicate that blow up tends to occurs for small positive energy (see Figures \ref{fig:p5q7_criteria} and \ref{fig:poly_criteria}).   
For various other numerical simulations for the NLS with combined nonlinearities (not necessarily including blow-up), we refer the reader to \cite[Section 6]{RRR2024}.
\smallskip

The remainder of the article is structured as follows: Section \ref{S:Prelim} provides an outline of useful inequalities and conserved quantities, which is referenced throughout the article. Sections \ref{S:Proof_3term} and \ref{S:Proof_exp} contain proofs for Theorems \ref{NegE_Bup_3term} and \ref{NegE_Bup_Exp}, respectively, both of which are criteria for negative energy. Section \ref{S:Proof_posEnergy} contains the proof of Theorem \ref{DR_Bup}, which is the criterion for the positive energy blow-up. Finally, Section \ref{S:Examples} provides various examples of initial data that satisfy the conditions for the blow-up criteria.
\smallskip

\section{Preliminaries}
\label{S:Prelim}
In the following section, we define conserved quantities, the virial identity, and other inequalities that are used for the equation \eqref{ClassicNLS} with nonlinearity $\mathcal{N}(u) = \sum_{k=0}^N d_k|u|^{\alpha_k}$, where $0 < N \leq \infty$. 

\subsection{Mass, energy, and the virial argument}\label{S:MEV}
We begin by stating some conserved quantities for combined nonlinear terms, mass and energy
\begin{align}\label{E:Mass}
	M[u](t) & = \int_{-\infty}^\infty|u(x,t)|^2 \, dx = M[u_0], \\
\label{E:Energy}
	E[u](t) & = \frac{1}{2}\int_{-\infty}^\infty| u_x(x,t)|^2dx -\sum_{k=0}^{N}\frac{d_k}{\alpha_k+2} \int_{-\infty}^\infty|u(x,t)|^{\alpha_k + 2} \, dx = E[u_0],
\end{align}
(at least formally in the last expression for the case of $N=\infty$, since the convergence of series would be needed as $N\to \infty$). We also utilize the variance
\begin{equation}\label{Variance}
    V[u](t) = \int_{-\infty}^\infty|xu(x,t)|^2dx,
\end{equation}
which dictates how \textit{spread out} our solution $u(x,t)$ is in space. The goal in general for the negative energy is to show that $V_{tt} < 0$ ($V$ is concave down), which would imply that for some finite time $T^*>0$, the (positive) variance becomes 0, thus leading to a contradiction. The collapse or blow-up of the solution can be thought of as it is culminating to a single \textit{blow-up} point.

To give an idea of the argument, we differentiate our variance \eqref{Variance} in time, obtaining
\begin{align}\label{Vdt}
    V_t[u](t) &= 2i\int_{-\infty}^\infty x \left[\bar{u}_xu - u_x\bar{u}\right]dx = 4\Im\int_{-\infty}^\infty xu_x\bar{u}dx =: -4y(t),
\end{align}
and
\begin{align}
    V_{tt}[u](t)= -4y^\prime(t) &= 8\int_{-\infty }^\infty|u_x|^2dx - 4\sum_{k=0}^N d_k\left(\frac{\alpha_k}{\alpha_k + 2}\right) \int_{-\infty}^\infty |u|^{\alpha_k + 2}dx \\ \label{Vdt2}
    &= 8 \| u_x \|_{L^2}^2 - 4\sum_{k=0}^N d_k\left(\frac{\alpha_k}{\alpha_k + 2}\right)\|u\|^{\alpha_k + 2}_{L^{\alpha_k+2}}.
\end{align}
Through our definition of $y(t)$, by the Cauchy-Schwarz inequality we obtain the following bound 
\begin{equation*}
    y(t) \leq \int_{-\infty}^\infty |xu_x\bar{u}|dx \leq \|u_x\|_{L^2} \|xu\|_{L^2}, 
\end{equation*}
and thus,
\begin{equation}\label{E:yineq}
    \frac{y(t)}{\|xu\|_{L^2} } \leq \|u_x\|_{L^2}.
\end{equation}
Utilizing the inequalities \eqref{E:yineq} and $y^\prime(t) \geq \epsilon \|u_x\|_{L^2}^2$ (which we show for each case in the proof of Theorem \ref{NegE_Bup_3term} below), an argument in \cite[Section 6]{TVZ2005} yields an ODE that implies the existence of a finite time $T^\ast$ :
\begin{equation}\label{E:time} 
0 < T^* <\frac{\, \,\|xu_0\|_{L^2}^2}{C y_0}
\end{equation}
such that
$$\lim_{t\to T^*} \|u_x(t) \|_{L^2} = \infty.$$

We also note that if the initial condition $u_0$ is real-valued, then the first derivative of the variance $V_t[u](0)=0$. This is useful later when we consider examples of the real-valued initial data.

We recall a simple yet crucial interpolation lemma for the negative energy estimates. This inequality is mentioned in passing in \cite[Section 6]{TVZ2005}, we state it here for convenience, outlining a proof.

\begin{lemma}\label{L:1}
    For positive constants $ a> 0$, we have for $2 < p < q$,
    \begin{equation}\label{E:Youngs}
        a^p \leq \left(\frac{q-p}{q-2}\right) a^2 + \left(\frac{p-2}{q-2}\right) a^q.
    \end{equation}
\end{lemma}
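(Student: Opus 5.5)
The plan is to prove \eqref{E:Youngs} by a convexity argument: write $a^p$ as a convex combination of $a^2$ and $a^q$ through the exponents, then apply the weighted arithmetic--geometric mean inequality. The case $a=0$ is trivial, so assume $a>0$.

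The exponent $p$ sits strictly between $2$ and $q$, so we write
\[
p=\theta\cdot 2+(1-\theta)\, q, \qquad \theta=\frac{q-p}{q-2}\in(0,1), \qquad 1-\theta=\frac{p-2}{q-2}.
\]
Then $a^p=(a^2)^{\theta}(a^q)^{1-\theta}$. The weighted AM--GM (Young) inequality $X^{\theta}Y^{1-\theta}\le \theta X+(1-\theta)Y$ for $X,Y\ge 0$ immediately yields
\[
a^p\le \frac{q-p}{q-2}\,a^2+\frac{p-2}{q-2}\,a^q,
\]
which is exactly \eqref{E:Youngs}. Equivalently, this is the convexity of $s\mapsto a^s=e^{s\ln a}$, evaluated at the convex combination $p=\theta\cdot 2+(1-\theta)q$.

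There is no real obstacle. The only point needing care is to check that the weights are positive and sum to one, which holds because $2<p<q$. I will also note the form in which the lemma is used later: applied pointwise with $a=|u(x)|$ and integrated, it gives $\|u\|_{L^p}^p\le \frac{q-p}{q-2}\|u\|_{L^2}^2+\frac{p-2}{q-2}\|u\|_{L^q}^q$. This is how intermediate nonlinear terms are absorbed into the mass and the highest focusing term.
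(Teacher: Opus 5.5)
Your proposal is correct and is essentially the paper's argument. The paper writes $a^p = a^{2\theta}\cdot a^{q(1-\theta)}$ with the same $\theta=\frac{q-p}{q-2}$ and then uses concavity of the logarithm and exponentiates, which is precisely the weighted AM--GM inequality you apply.
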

\begin{proof}
    For $2 < p < q$, we write,
    \begin{equation*}
        a^p = a^{2\theta}\cdot a^{q(1-\theta)} \;\mbox{ for } \; \theta = \frac{q-p}{q-2}. 
    \end{equation*}
    Notice that by definition $0 < \theta < 1$. Then, by concavity of the logarithm
    \begin{equation*}
        \ln(a^p) \leq \theta\ln(a^{2}) + (1-\theta)\ln(a^{q}) \leq \ln(\theta a^{2} + (1-\theta)a^{q}).
    \end{equation*}
Exponentiating both sides, we obtain our desired result.
\end{proof}

\begin{remark}\label{R:2.2} 
One may choose $\delta>0$ in Lemma \ref{L:1}, then there exists $C(\delta)>0$ such that the inequality \eqref{E:Youngs} generalizes as follows:
    \begin{equation}\label{E:Youngs_general}
        a^p \leq C(\delta) a^2 + \delta a^q,
    \end{equation}
    where $C(\delta)$ is appropriately large if $\delta$ is small (and vise versa). In fact, the sharp $C(\delta)$ is given by  
    \begin{equation}\label{E:C_delta}
       C(\delta) = \left(\frac{q-p}{q-2}\right) \left(\delta^{-1}\frac{p-2}{q-2}\right)^\frac{p-2}{q-p}.
    \end{equation} This version of the inequality is used in proving the blow-up criteria for two nonlinear terms by Tao-Visan-Zhang \cite[Section 6]{TVZ2005}.
\end{remark}

\subsection{Initial data with a quadratic phase}
For Theorems \ref{NegE_Bup_3term} and \ref{NegE_Bup_Exp}, we explicitly require the energy \eqref{E:Energy} to be negative. Simultaneously, we would also like to have some connection between blow-up and the exponent of some quadratic phase as described in the simulations of \cite[Section 6]{RRR2024}. We can ensure this is the case by choosing initial data 
\begin{equation}\label{E:InitialDataNegE}
u_0(x) = e^{\frac{ib|x|^2}{4}}v_0, \quad \mbox{with} \quad b < 0.
\end{equation}
Then
\begin{equation}\label{E:Energy_sub_negE}
    E[u_0] = E[v_0] + \frac{b^2}{8}\int_{-\infty}^\infty|x v_0(x)|^2dx + \frac{b}{2} \Im \int_{-\infty}^\infty x\,  \partial_x(v_0(x)) \, \overline{v_0(x)} \, dx.
\end{equation}
We pick initial condition $v_0$ such that $E[v_0] < 0$ and also the following inequality holds 
\begin{equation}\label{E:NegE_requirement}
    \frac{|b|}{4}\int_{-\infty}^\infty|x v_0(x)|^2dx <  \Im \int_{-\infty}^\infty x \partial_x(v_0(x))\overline{v_0(x)}dx,
\end{equation}
ensuring that the energy of $u_0$ in \eqref{E:Energy_sub_negE} is negative ($E[u_0]<0$). For examples of such initial data that satisfy this condition, see Section \ref{S:Examples} below.

\section{Blow-up for three nonlinearities}\label{S:Proof_3term}

In this section we obtain a blow-up criteria in the style of \cite{Glassey1977} and \cite{TVZ2005} for the following combined NLS with three terms 
\begin{equation}\label{3CNLS}
    \begin{cases}
        & iu_t + u_{xx} = \lambda_1 |u|^{\alpha_1}u + \lambda_2 |u|^{\alpha_2}u + \lambda_3 |u|^{\alpha_3}u \\
        &u(x,0) = u_{0} \in H^1(\mathbb R), \\
    \end{cases}
\end{equation}
where $(x,t) \in \R\times \R^+$, $\lambda_j \in \R$ and $\alpha_j \in \R^+$.

From Section \ref{S:Prelim}, we take the virial identity \eqref{Vdt2} with $(d_1, d_2, d_3,...,d_N) = (-\lambda_1, -\lambda_2,-\lambda_3,0, ...,0)$ and $(\alpha_1, \alpha_2,\alpha_3, ...,\alpha_N) = (\alpha_1, \alpha_2,\alpha_3,0,...,0)$, which becomes
\begin{equation}\label{E:Vdt2_3nonlinear}
    V_{tt} = 8 \|u_x\|^2_{L^2} + \frac{4\lambda_1\alpha_1}{\alpha_1 + 2}\|u\|^{\alpha_1 + 2}_{L^{\alpha_1 + 2}} + \frac{4\lambda_2\alpha_2}{\alpha_2 + 2}\|u\|^{\alpha_2 + 2}_{L^{\alpha_2 + 2}} + \frac{4\lambda_3\alpha_3}{\alpha_3 + 2}\|u\|^{\alpha_3 + 2}_{L^{\alpha_3 + 2}},
\end{equation}
and the energy \eqref{E:Energy} in this case is given by
\begin{equation}\label{E:Energy_3nonlinear}
    E[u] = \frac{1}{2} \|u_x\|^2_{L^2} + \frac{\lambda_1}{\alpha_1 + 2}\|u\|^{\alpha_1 + 2}_{L^{\alpha_1 + 2}} + \frac{\lambda_2}{\alpha_2 + 2}\|u\|^{\alpha_2 + 2}_{L^{\alpha_2 + 2}} + \frac{\lambda_3}{\alpha_3 + 2}\|u\|^{\alpha_3 + 2}_{L^{\alpha_3 + 2}}.
\end{equation}

For the rest of the proof we recall that $\alpha_3>4$, $0 < \alpha_1 < \alpha_2 < \alpha_3$ and $\lambda_3 < 0$.

\subsection{Proof of Theorem \ref{NegE_Bup_3term}}
Our goal is to show the finite time blow-up through a convexity argument. Thus, we aim to show that $V_{tt}(t) \leq 0$ at some time $t = T^*$. Our approach takes ideas from \cite{TVZ2005} with delicate splittings, interpolations and appropriate norm bounds. We begin as in \eqref{Vdt2} by setting, 
$V_{tt} = -4y^\prime(t),$
where 
\begin{equation}\label{yprime}
    y^\prime(t) = -2\|u_x\|^2_{L^2} - \frac{\lambda_1\alpha_1}{\alpha_1 + 2}\|u\|^{\alpha_1 + 2}_{L^{\alpha_1 + 2}} - \frac{\lambda_2\alpha_2}{\alpha_2 + 2}\|u\|^{\alpha_2 + 2}_{L^{\alpha_2 + 2}} - \frac{\lambda_3\alpha_3}{\alpha_3 + 2}\|u\|^{\alpha_3 + 2}_{L^{\alpha_3 + 2}}.
\end{equation}

Then it is sufficient to show that $y^\prime(t) > 0$. Notice, however, that the signs of the coefficients and the magnitude of the exponents influence which direction we take throughout the proof, therefore, we split the argument into the cases as shown in Table \ref{tab:Bup_cases}.  
{\small 
\begin{table}[h!]
    \centering
    \begin{tabular}{c|c|c}
    Case Number & Coefficients & Powers ($\alpha_3 > 4$)\\
    \hline
        Case 1~ & $\lambda_1, \lambda_2 > 0$ & $0< \alpha_1 < \alpha_2 < \alpha_3$\\
        Case 2A & $\lambda_1, \lambda_2 < 0$ & $4< \alpha_1 < \alpha_2 < \alpha_3$\\
        Case 2B & $\lambda_1, \lambda_2 < 0$ & $\alpha_1 < 4 < \alpha_2 < \alpha_3$\\
        Case 2C & $\lambda_1, \lambda_2 < 0$ & $\alpha_1 < \alpha_2 < 4 < \alpha_3$ \\
        Case 3A & $\lambda_1 > 0, \lambda_2 < 0$ & $4 < \alpha_1 < \alpha_2 < \alpha_3$\\
        Case 3B & $\lambda_1 > 0, \lambda_2 < 0$ & $\alpha_1 < 4 < \alpha_2 < \alpha_3$\\
        Case 3C & $\lambda_1 > 0, \lambda_2 < 0$ & $\alpha_1 < \alpha_2 < 4 < \alpha_3$\\
        Case 4A & $\lambda_1 < 0, \lambda_2 > 0$ & $4 < \alpha_1 < \alpha_2 < \alpha_3$\\
        Case 4B & $\lambda_1 < 0, \lambda_2 > 0$ & $\alpha_1 < 4 < \alpha_2 < \alpha_3$\\
        Case 4C & $\lambda_1 < 0, \lambda_2 > 0$ & $\alpha_1 < \alpha_2 < 4  < \alpha_3$
     \end{tabular}
    \caption{\small Cases of coefficients and powers for three nonlinear terms in the proof of Theorem \ref{NegE_Bup_3term}.}
    \label{tab:Bup_cases}
\end{table}}

For each case we start by writing \eqref{yprime}, solving for the $\|u\|_{L^{\alpha_j+2}}^{\alpha_j+2}$ term in \eqref{E:Energy_3nonlinear} (which will depend on each case),  and replacing the associated term in \eqref{yprime}, from there the cases differ. In some cases the computation is straightforward, but in others we apply some interpolation estimates. When a new technique is introduced, it will be described; otherwise, the computation will be brief.
\smallskip

\textbf{Case 1: } ($\lambda_1, \lambda_2 > 0$, $0< \alpha_1 < \alpha_2 < \alpha_3$ and $E <0$).

Starting from \eqref{yprime}, notice that in this case, the terms that correspond to $\alpha_1$ and $\alpha_2$ and $u_x$ have negative coefficient, to correct for this, we make the substitution (using \eqref{E:Energy_3nonlinear}),

\begin{equation}\label{a3_energy}
    - \frac{\lambda_3\alpha_3}{\alpha_3 + 2}\|u\|^{\alpha_3 + 2}_{L^{\alpha_3 + 2}} = \alpha_3\left[\frac{1}{2} \|u_x\|^2_{L^2} + \frac{\lambda_1}{\alpha_1 + 2}\|u\|^{\alpha_1 + 2}_{L^{\alpha_1 + 2}} + \frac{\lambda_2}{\alpha_2 + 2}\|u\|^{\alpha_2 + 2}_{L^{\alpha_2 + 2}} - E\right].
\end{equation}
Then,
\begin{align*}
    y^\prime(t) &= -2\|u_x\|^2_{L^2} - \frac{\lambda_1\alpha_1}{\alpha_1 + 2}\|u\|^{\alpha_1 + 2}_{L^{\alpha_1 + 2}} - \frac{\lambda_2\alpha_2}{\alpha_2 + 2}\|u\|^{\alpha_2 + 2}_{L^{\alpha_2 + 2}}+ \\ &\hspace{2.3in}+ \alpha_3\left[\frac{1}{2} \|u_x\|^2_{L^2} + \frac{\lambda_1}{\alpha_1 + 2}\|u\|^{\alpha_1 + 2}_{L^{\alpha_1 + 2}} + \frac{\lambda_2}{\alpha_2 + 2}\|u\|^{\alpha_2 + 2}_{L^{\alpha_2 + 2}} - E\right] \\
    &= \left[\frac{\alpha_3}{2} - 2\right]\|u_x\|^2_2 + \frac{\lambda_1(\alpha_3 - \alpha_1)}{\alpha_1 + 2}\|u\|^{\alpha_1 + 2}_{L^{\alpha_1 + 2}} + \frac{\lambda_2(\alpha_3-\alpha_2)}{\alpha_2 + 2}\|u\|^{\alpha_2 + 2}_{L^{\alpha_2 + 2}} - E\alpha_3 \\&\geq \frac{\alpha_3 -4}{2}\|u_x\|^2_{L^2} - E\alpha_3 > 0,
\end{align*}
provided that $E<0$ (and $\alpha_3>4$).
\smallskip

\textbf{Case 2A: } ($\lambda_1, \lambda_2 < 0$, $4< \alpha_1 < \alpha_2 < \alpha_3$ and $E <0$)

In this case, all nonlinear terms are focusing, and thus, only the term corresponding to $u_x$ has negative coefficient. Thus, we use a substitution similar to \eqref{a3_energy} (but with $\alpha_1$), reducing this case to the same result and condition on the energy as in \textbf{Case 1}.
\smallskip

\textbf{Case 2B: } ($\lambda_1, \lambda_2 < 0$, $ \alpha_1 < 4 < \alpha_2 < \alpha_3$ and $E <0$). In this case, $\alpha_1 < 4$, thus, we cannot proceed as in case 2A. Instead, we apply \eqref{E:Youngs_general} to the $\alpha_2$ term and then proceed as usual,
{\small
\begin{align*}
    y^\prime(t) &= -2\|u_x\|^2_{L^2} - \frac{\lambda_1\alpha_1}{\alpha_1 + 2}\|u\|^{\alpha_1 + 2}_{L^{\alpha_1 + 2}}  - \frac{\lambda_2\alpha_2}{\alpha_2 + 2}\|u\|^{\alpha_2 + 2}_{L^{\alpha_2 + 2}} - \frac{\lambda_3\alpha_3}{\alpha_3 + 2}\|u\|^{\alpha_3 + 2}_{L^{\alpha_3 + 2}} \\
    &= -2\|u_x\|^2_{L^2} - \frac{\lambda_1\alpha_1}{\alpha_1 + 2}\|u\|^{\alpha_1 + 2}_{L^{\alpha_1 + 2}}  + \alpha_2 \left[\frac{1}{2} \|u_x\|^2_2 + \frac{\lambda_1}{\alpha_1 + 2}\|u\|^{\alpha_1 + 2}_{L^{\alpha_1 + 2}} + \frac{\lambda_3}{\alpha_3 + 2}\|u\|^{\alpha_3 + 2}_{L^{\alpha_3 + 2}} - E\right] \\ &\hspace{4in}- 
     \frac{\lambda_3\alpha_3}{\alpha_3 + 2}\|u\|^{\alpha_3 + 2}_{L^{\alpha_3 + 2}} \\
    & = \left[\frac{\alpha_2- 4}{2}\right]\|u_x\|^2_{L^2} + \underbrace{\underbrace{\frac{\lambda_1(\alpha_2-\alpha_1)}{\alpha_1 + 2}}_{A}\|u\|^{\alpha_1 + 2}_{L^{\alpha_1 + 2}}}_{*} + \underbrace{\frac{\lambda_3(\alpha_2-\alpha_3)}{\alpha_3 + 2}\|u\|^{\alpha_3 + 2}_{L^{\alpha_3 + 2}}}_{\mathclap{\substack{\text{positive} \\ \text{($\lambda_3 <0 $ and $\alpha_2-\alpha_3 < 0$)}}}} - E\alpha_2.
\end{align*}}
For any $\delta > 0$, we can apply 

\eqref{E:Youngs_general} to $|*|$ to get 
$$
|*| \leq |A| \big(C(\delta)\|u\|_{L^2}^{2} + \delta\|u\|^{\alpha_3+2}_{L^{\alpha_3+2}} \big),
$$
and thus, we choose $\delta=\delta_{2B}$ such that the following condition holds

\begin{equation}\label{E:delta-2B}
\delta_{2B} \leq \frac{|\lambda_3|}{|\lambda_1|}\frac{|\alpha_2 - \alpha_3|}{(\alpha_2 - \alpha_1)}\frac{(\alpha_1+2)}{(\alpha_3 + 2)}. 
\end{equation}

This leads to the lower bound
\begin{align*}
    y^\prime(t) &\geq \left[\frac{\alpha_2 - 4}{2}\right]\|u_x\|^2_{L^2} + \frac{\lambda_1(\alpha_2-\alpha_1)}{\alpha_1 + 2}(C(\delta_{2B})\|u\|_{L^2}^{2} + \delta_{2B}\|u\|^{\alpha_3+2}_{L^{\alpha_3+2}}) +\frac{\lambda_3(\alpha_2-\alpha_3)}{\alpha_3 + 2}\|u\|^{\alpha_3 + 2}_{L^{\alpha_2 + 2}} - E\alpha_2 \\
    &\geq \left[\frac{\alpha_2 - 4}{2}\right]\|u_x\|^2_{L^2} + \frac{\lambda_1C(\delta_{2B})(\alpha_2-\alpha_1)}{\alpha_1 + 2}M - E\alpha_2 >0,
\end{align*}
provided $E < -\frac{|\lambda_1|C(\delta_{2B})(\alpha_2-\alpha_1)}{\alpha_2(\alpha_1 + 2)}M$ and $\alpha_2 >4$ by the assumption in this case. 
\smallskip

\textbf{Case 2C: } ($\lambda_1, \lambda_2 < 0$, $ \alpha_1 < \alpha_2 < 4 < \alpha_3$ and $E <0$). As in \textbf{Case 2B}, we cannot substitute the $\alpha_1$ (and $\alpha_2$) term since it does not resolve the negative coefficient for the $u_x$ term. Directly substituting for $\alpha_3$ is also not enough as the coefficients $\lambda_1$ and $\lambda_2$ are negative. Thus, we split the $\alpha_3$ into parts and utilize \eqref{E:Youngs_general}, for any $\theta \in (0,1)$, thus, obtaining
{\small 
\begin{align*}
    y^\prime(t) &= -2\|u_x\|^2_{L^2} - \frac{\lambda_1\alpha_1}{\alpha_1 + 2}\|u\|^{\alpha_1 + 2}_{L^{\alpha_1 + 2}} - \frac{\lambda_2\alpha_2}{\alpha_2 + 2}\|u\|^{\alpha_2 + 2}_{L^{\alpha_2 + 2}} - \frac{\lambda_3\alpha_3\theta}{\alpha_3 + 2}\|u\|^{\alpha_3 + 2}_{L^{\alpha_3 + 2}} - \frac{\lambda_3\alpha_3(1-\theta)}{\alpha_3 + 2}\|u\|^{\alpha_3 + 2}_{L^{\alpha_3 + 2}} \\
    & = -2\|u_x\|^2_{L^2} - \frac{\lambda_1\alpha_1}{\alpha_1 + 2}\|u\|^{\alpha_1 + 2}_{L^{\alpha_1 + 2}} - \frac{\lambda_2\alpha_2}{\alpha_2 + 2}\|u\|^{\alpha_2 + 2}_{L^{\alpha_2 + 2}} + \\
    &+ \alpha_3\theta \left[\frac{1}{2} \|u_x\|^2_{L^2} + \frac{\lambda_1}{\alpha_1 + 2}\|u\|^{\alpha_1 + 2}_{L^{\alpha_1 + 2}} + \frac{\lambda_2}{\alpha_2 + 2}\|u\|^{\alpha_2 + 2}_{L^{\alpha_2 + 2}} - E\right] - \frac{\lambda_3\alpha_3(1-\theta)}{\alpha_3 + 2}\|u\|^{\alpha_3 + 2}_{L^{\alpha_3 + 2}}\\
    &= \left[\frac{\alpha_3\theta}{2} - 2\right]\|u_x\|^2_2 + \underbrace{\underbrace{\frac{\lambda_1(\alpha_3\theta - \alpha_1)}{\alpha_1 + 2}}_{A}\|u\|^{\alpha_1 + 2}_{L^{\alpha_1 + 2}}}_{*} + \underbrace{\underbrace{\frac{\lambda_2(\alpha_3\theta - \alpha_2)}{\alpha_2 + 2}}_{B}\|u\|^{\alpha_2 + 2}_{L^{\alpha_2 + 2}}}_{**} - \frac{\lambda_3\alpha_3(1-\theta)}{\alpha_3 + 2}\|u\|^{\alpha_3 + 2}_{L^{\alpha_3 + 2}} - E\alpha_3\theta.
\end{align*}
}
Note that to keep a positive sign of the first term, we require $\theta > \frac{4}{\alpha_3}$. Then for any $\delta_1,\delta_2>0$ an application of \eqref{E:Youngs_general} on $|*|$ and $|**|$ yields
\begin{equation}\label{E:delta-2C-b}
|*| \leq |A| \big(C(\delta_1)\|u\|_{L^2}^2 + \delta_1\|u\|^{\alpha_3+2}_{L^{\alpha_3+2}} \big) 
\quad \mbox{and} \quad
|**| \leq |B|\big( C(\delta_2)\|u\|_{L^2}^2 + \delta_2\|u\|^{\alpha_3+2}_{L^{\alpha_3+2}} \big).
\end{equation}
Choose $\delta_1 = \delta_{2C,1}>0$ and $\delta_2 = \delta_{2C,2}>0$ such that
\begin{equation}\label{E:delta-2C-c}
\delta_{2C,1}|A| \leq \frac{|\lambda_3|\alpha_3(1-\theta)}{2(\alpha_3 + 2)} \quad \mbox{and} \quad
\delta_{2C,2}|B| \leq \frac{|\lambda_3|\alpha_3(1-\theta)}{2(\alpha_3 + 2)}.
\end{equation}
Simplifying, we obtain the following condition: fix $\theta \in (\frac{4}{\alpha_3},1)$, then choose $\delta_{2C,j}$, $j=1,2$, such that
\begin{equation}\label{E:delta-2C}
0< \delta_{2C,~j} \leq  \frac{~~|\lambda_3| (\alpha_j+2)}{2|\lambda_j| (\alpha_3+2)}  \frac{(1-\theta)}{(\theta - {\alpha_j}/{\alpha_3})}, ~~j=1,2.
\end{equation}
Continuing with the lower bound, we obtain
\begin{align*}
    y^\prime(t) &\geq \frac{\alpha_3\theta - 4}{2} \|u_x\|^2_{L^2} + \frac{\lambda_1(\alpha_3\theta - \alpha_1)}{\alpha_1 + 2}(C(\delta_{2C,1})\|u\|_{L^2}^2 + \delta_{2C,1}\|u\|^{\alpha_3+2}_{L^{\alpha_3+2}}) \\ 
    &\qquad+\frac{\lambda_2(\alpha_3\theta - \alpha_2)}{\alpha_2 + 2}(C(\delta_{2C,2})\|u\|_{L^2}^2 + \delta_{2C,2}\|u\|^{\alpha_3+2}_{L^{\alpha_3+2}}) - \frac{\lambda_3\alpha_3(1-\theta)}{\alpha_3 + 2}\|u\|^{\alpha_3 + 2}_{L^{\alpha_3 + 2}} - E\alpha_3\theta \\
    &\geq \frac{\alpha_3\theta - 4}{2}\|u_x\|^2_{L^2} - E\theta \alpha_3 + \left[\frac{\lambda_1C(\delta_{2C,1})(\alpha_3\theta - \alpha_1)}{\alpha_1 + 2}+\frac{\lambda_2C(\delta_{2C,2})(\alpha_3\theta - \alpha_2)}{\alpha_2 + 2} \right]M > 0,
\end{align*}
provided $E < -\frac{1}{\alpha_3 \theta}\left[\frac{|\lambda_1|C(\delta_{2C,1})(\alpha_3\theta - \alpha_1)}{\alpha_1 + 2}+\frac{|\lambda_2|C(\delta_{2C,2})(\alpha_3\theta - \alpha_2)}{\alpha_2 + 2} \right]M $ and $\theta >4/\alpha_3$ as assumed above.

\textbf{Case 3A: } ($\lambda_1 > 0, \lambda_2 < 0$, $  4 < \alpha_1 < \alpha_2 < \alpha_3$ and $E <0$).

A substitution similar to \eqref{a3_energy} but for $\alpha_1$ reduces this case to similar calculation and result as \textbf{Cases 1} and \textbf{2A}.
\smallskip

\textbf{Case 3B: } ($\lambda_1 > 0, \lambda_2 < 0$, $ \alpha_1 < 4 < \alpha_2 < \alpha_3$ and $E <0$). 

A substitution of \eqref{a3_energy} but for $\alpha_2$ reduces this case to a similar calculation and result as in \textbf{Cases 1}, \textbf{2A}, and \textbf{3A}.
\smallskip

\textbf{Case 3C: } ($\lambda_1 > 0, \lambda_2 < 0$, $  \alpha_1 < \alpha_2 < 4 < \alpha_3$ and $E <0$). This case is treated similarly to that of \textbf{Case 2B} and \textbf{Case 2C}, since $\lambda_1 > 0$, and thus, that term is easily bounded away, yielding
\begin{align*}
    y^\prime(t) &= -2\|u_x\|^2_{L^2} - \frac{\lambda_1\alpha_1}{\alpha_1 + 2}\|u\|^{\alpha_1 + 2}_{L^{\alpha_1 + 2}} - \frac{\lambda_2\alpha_2}{\alpha_2 + 2}\|u\|^{\alpha_2 + 2}_{L^{\alpha_2 + 2}} - \frac{\lambda_3\alpha_3\theta}{\alpha_3 + 2}\|u\|^{\alpha_3 + 2}_{L^{\alpha_3 + 2}} - \frac{\lambda_3\alpha_3(1-\theta)}{\alpha_3 + 2}\|u\|^{\alpha_3 + 2}_{L^{\alpha_3 + 2}} \\
    & = -2\|u_x\|^2_2 - \frac{\lambda_1\alpha_1}{\alpha_1 + 2}\|u\|^{\alpha_1 + 2}_{L^{\alpha_1 + 2}} - \frac{\lambda_2\alpha_2}{\alpha_2 + 2}\|u\|^{\alpha_2 + 2}_{L^{\alpha_2 + 2}} + \\
    &+ \alpha_3\theta \left[\frac{1}{2} \|u_x\|^2_{L^2} + \frac{\lambda_1}{\alpha_1 + 2}\|u\|^{\alpha_1 + 2}_{L^{\alpha_1 + 2}} + \frac{\lambda_2}{\alpha_2 + 2}\|u\|^{\alpha_2 + 2}_{L^{\alpha_2 + 2}} - E\right] - \frac{\lambda_3\alpha_3(1-\theta)}{\alpha_3 + 2}\|u\|^{\alpha_3 + 2}_{L^{\alpha_3 + 2}}\\
    &= \left[\frac{\alpha_3\theta}{2} - 2\right]\|u_x\|^2_2 + \underbrace{\frac{\lambda_1(\alpha_3\theta - \alpha_1)}{\alpha_1 + 2}\|u\|^{\alpha_1 + 2}_{L^{\alpha_1 + 2}}}_{positive} + \underbrace{\underbrace{\frac{\lambda_2(\alpha_3\theta - \alpha_2)}{\alpha_2 + 2}}_{B}\|u\|^{\alpha_2 + 2}_{L^{\alpha_2 + 2}}}_{*} - \frac{\lambda_3\alpha_3(1-\theta)}{\alpha_3 + 2}\|u\|^{\alpha_3 + 2}_{L^{\alpha_3 + 2}} - E\alpha_3\theta.
\end{align*}
We again require $\theta > \frac{4}{\alpha_3}$ to keep the first term positive. Then for any $\delta > 0$, we apply \eqref{E:Youngs_general} to $|*|$ to obtain
\begin{equation}
    |*| \leq |B|(C(\delta)\|u\|_{L^2}^2 + \delta\|u\|^{\alpha_3+2}_{L^{\alpha_3+2}}),
\end{equation}
and thus, we choose $\delta=\delta_{3C}>0$ such that the following condition holds 
\begin{equation}\label{E:delta-3C}
\delta_{3C} \leq \frac{|\lambda_3|}{|\lambda_2|}\frac{(\alpha_2 + 2)}{(\alpha_3 + 2)}\frac{(1-\theta)}{(\theta - \alpha_2/\alpha_3)}. 
\end{equation}

Thus,
\begin{align*}
    y^\prime(t) &\geq \left[\frac{\alpha_3\theta}{2} - 2\right]\|u_x\|^2_{L^2} + \frac{\lambda_2(\alpha_3\theta-\alpha_2)}{\alpha_2 + 2}(C(\delta_{3C})\|u\|_{L^2}^2 + \delta_{3C}\|u\|^{\alpha_3+2}_{L^{\alpha_3+2}}) -\frac{\lambda_3\alpha_3(1-\theta)}{\alpha_3 + 2}\|u\|^{\alpha_3 + 2}_{L^{\alpha_3 + 2}} - E\alpha_3\theta \\
    &\geq \left[\frac{\alpha_3\theta}{2} - 2\right]\|u_x\|^2_{L^2} + \frac{\lambda_2C(\delta_{3C})(\alpha_3\theta-\alpha_2)}{\alpha_2 + 2}M - E\alpha_3\theta>0,
\end{align*}
provided $E < -  \frac{|\lambda_2|C(\delta_{3C})(\alpha_3\theta-\alpha_2)}{\alpha_3\theta(\alpha_2 + 2)}M$ and $\theta >4/\alpha_3$ as assumed above. 
\begin{remark}
    Note that in this case, $\theta$ is specifically required, as we need a residual positive term to cancel out with \eqref{E:delta-3C} and $L^{\alpha_3 + 2}$ term. Unlike other cases, we cannot use the \textit{positive} $L^{\alpha_1 + 2}$ term since $\alpha_1 < \alpha_2$. 
\end{remark}
\textbf{Case 4A: } ($\lambda_1 < 0, \lambda_2 > 0$, $ 4 < \alpha_1 < \alpha_2 < \alpha_3$ and $E <0$). This case is treated similar to that of \textbf{Case 1}, but since $\lambda_2 > 0$ it cannot simply be bounded away, so we use it to apply \eqref{E:Youngs_general},
{\small\begin{align*}
    y^\prime(t) &= -2\|u_x\|^2_{L^2} + \alpha_1\left[\frac{1}{2} \|u_x\|^2_{L^2} + \frac{\lambda_2}{\alpha_2 + 2}\|u\|^{\alpha_2 + 2}_{L^{\alpha_2 + 2}} + \frac{\lambda_3}{\alpha_3 + 2}\|u\|^{\alpha_3 + 2}_{L^{\alpha_3 + 2}} - E\right] - \frac{\lambda_2\alpha_2}{\alpha_2 + 2}\|u\|^{\alpha_2 + 2}_{L^{\alpha_2 + 2}} - \frac{\lambda_3\alpha_3}{\alpha_3 + 2}\|u\|^{\alpha_3 + 2}_{L^{\alpha_3 + 2}} \\
    &= \left[\frac{\alpha_1}{2} - 2\right]\|u_x\|^2_{L^2} \underbrace{\underbrace{-\frac{\lambda_2(\alpha_2 - \alpha_1)}{\alpha_2 + 2}}_{A}\|u\|^{\alpha_2 + 2}_{L^{\alpha_2 + 2}}}_{*} + \underbrace{\frac{\lambda_3(\alpha_1-\alpha_3)}{\alpha_3 + 2}\|u\|^{\alpha_3 + 2}_{L^{\alpha_3 + 2}}}_{\text{positive}} - E\alpha_1.
\end{align*}}
Then for any $\delta > 0$, we can apply 
\eqref{E:Youngs_general} to $|*|$ to get 
\begin{equation}
    |*| \leq |A|(C(\delta)\|u\|_{L^2}^2 + \delta\|u\|^{\alpha_3+2}_{L^{\alpha_3+2}}),
\end{equation}
and thus, we choose $\delta=\delta_{4A}>0$ such that the following condition holds 
\begin{equation}\label{E:delta-4A}
\delta_{4A} \leq \frac{|\lambda_3|}{|\lambda_2|}\frac{|\alpha_1 - \alpha_3|}{(\alpha_2 - \alpha_1)} \frac{(\alpha_2 +2)}{(\alpha_3 + 2)}. 
\end{equation}
Thus,
\begin{align*}
    y^\prime(t) &\geq \left[\frac{\alpha_1}{2} - 2\right]\|u_x\|^2_{L^2} - \left[ E\alpha_1 + \frac{\lambda_2(\alpha_2 - \alpha_1)C(\delta_{4A})}{\alpha_2 + 2}M \right]>0, 
\end{align*}
provided $E < -\frac{\lambda_2(\alpha_2 - \alpha_1)C(\delta_{4A})}{\alpha_1(\alpha_2 + 2)}M$ and $\alpha_1 >4$ by the assumption in this case.

\textbf{Case 4B: } ($\lambda_1 < 0, \lambda_2 > 0$, $  \alpha_1 < 4 < \alpha_2 < \alpha_3$ and $E <0$). This case closely resembles \textbf{Case 2B}. However, the sign of $\lambda_2$ is flipped, which causes a sign mismatch when splitting and using Young's inequality. To avoid this, we apply the substitution \eqref{a3_energy}, then split the $\|u\|_{L^{\alpha_3 + 2}}^{\alpha_3 + 2}$ term into two pieces. Applying \eqref{a3_energy} once again allows us to use the $\lambda_1$ coefficient for Young's inequality and works around the sign mismatch,

{\small\begin{align*}
    y^\prime(t) &= -2\|u_x\|^2_{L^2} - \frac{\lambda_1\alpha_1}{\alpha_1 + 2}\|u\|^{\alpha_1 + 2}_{L^{\alpha_1 + 2}}  + \alpha_2\left[\frac{1}{2} \|u_x\|^2_{L^2} + \frac{\lambda_1}{\alpha_1 + 2}\|u\|^{\alpha_1 + 2}_{L^{\alpha_1 + 2}} + \frac{\lambda_3}{\alpha_3 + 2}\|u\|^{\alpha_3 + 2}_{L^{\alpha_3 + 2}} - E\right] - \frac{\lambda_3\alpha_2}{\alpha_3 + 2}\|u\|^{\alpha_3 + 2}_{L^{\alpha_3 + 2}} \\
     &= \left[\frac{\alpha_2}{2} - 2\right]\|u_x\|^2_{L^2} + \frac{\lambda_1(\alpha_2 - \alpha_1)}{\alpha_1 + 2}\|u\|^{\alpha_1 + 2}_{L^{\alpha_1 + 2}} - \frac{\lambda_3(\alpha_3-\alpha_2)}{\alpha_3 + 2}\|u\|^{\alpha_3 + 2}_{L^{\alpha_3 + 2}} - E\alpha_2 \\
    &= \left[\frac{\alpha_2}{2} - 2\right]\|u_x\|^2_{L^2} + \frac{\lambda_1(\alpha_2 - \alpha_1)}{\alpha_1 + 2}\|u\|^{\alpha_1 + 2}_{L^{\alpha_1 + 2}} - \frac{\lambda_3(\alpha_3-\alpha_2)\theta}{\alpha_3 + 2}\|u\|^{\alpha_3 + 2}_{L^{\alpha_3 + 2}} - \frac{\lambda_3(\alpha_3-\alpha_2)(1-\theta)}{\alpha_3 + 2}\|u\|^{\alpha_3 + 2}_{L^{\alpha_3 + 2}}- E\alpha_2 \\
    &= \left[\frac{\alpha_2}{2} - 2\right]\|u_x\|^2_{L^2} + \frac{\lambda_1(\alpha_2 - \alpha_1)}{\alpha_1 + 2}\|u\|^{\alpha_1 + 2}_{L^{\alpha_1 + 2}} + (\alpha_3-\alpha_2)\theta\left[\frac{1}{2} \|u_x\|^2_{L^2} + \frac{\lambda_1}{\alpha_1 + 2}\|u\|^{\alpha_1 + 2}_{L^{\alpha_1 + 2}} + \frac{\lambda_2}{\alpha_2 + 2}\|u\|^{\alpha_2 + 2}_{L^{\alpha_2 + 2}} - E\right] \\
    &\hspace{1.5in}- \frac{\lambda_3(\alpha_3-\alpha_2)(1-\theta)}{\alpha_3 + 2}\|u\|^{\alpha_3 + 2}_{L^{\alpha_3 + 2}}- E\alpha_2 \\
    &= \left[\frac{(\alpha_3 - \alpha_2)\theta + \alpha_2 -4}{2}\right]\|u_x\|^2_{L^2} + \underbrace{\underbrace{\frac{\lambda_1((\alpha_3-\alpha_2)\theta +(\alpha_2 - \alpha_1))}{\alpha_1 + 2}}_{A}\|u\|^{\alpha_1 + 2}_{L^{\alpha_1 + 2}}}_{*} \\
    &\hspace{1.5in}+ \underbrace{\frac{\lambda_2(\alpha_3-\alpha_2)\theta}{\alpha_2 + 2}\|u\|^{\alpha_2 + 2}_{L^{\alpha_2 + 2}}}_{\text{positive}} - \frac{\lambda_3(\alpha_3-\alpha_2)(1-\theta)}{\alpha_3 + 2}\|u\|^{\alpha_3 + 2}_{L^{\alpha_3 + 2}}- E\left[(\alpha_3-\alpha_2)\theta + \alpha_2\right].
\end{align*}}
Here, the coefficient in front of the first term is positive for any $\theta \in (0,1)$, since in this case $\alpha_2>4$ and also $\alpha_3>\alpha_2$. Therefore, for any $\delta >0$ applying \eqref{E:Youngs_general} to $|*|$, we get
\begin{equation}
    |*| \leq |A|(C(\delta)\|u\|_{L^2}^2 + \delta\|u\|^{\alpha_3+2}_{L^{\alpha_3+2}}).
\end{equation}
Thus, fix $\theta \in (0,1)$ and choose $\delta=\delta_{4B}$ such that the following condition holds 
\begin{equation}\label{E:delta-4B}
\delta_{4B} \leq \frac{|\lambda_3|(\alpha_1+2)  }{|\lambda_1|(\alpha_3 + 2)} \frac{(1-\theta)}{ (\theta + (\alpha_2-\alpha_1)/(\alpha_3-\alpha_2) )}. 
\end{equation}
Then we deduce the following lower bound
\begin{align*}
    y^\prime(t) & \geq \left[\frac{(\alpha_3 - \alpha_2)\theta + \alpha_2 -4}{2}\right]\|u_x\|^2_2 + \frac{\lambda_1((\alpha_3-\alpha_2)\theta +(\alpha_2 - \alpha_1))C(\delta_{4B})}{\alpha_1 + 2}M- E\left[(\alpha_3-\alpha_2)\theta + \alpha_2\right] >0,
\end{align*}
provided $E < - \frac{|\lambda_1|((\alpha_3-\alpha_2)\theta +(\alpha_2 - \alpha_1))C(\delta_{4B})}{\left[(\alpha_3-\alpha_2)\theta + \alpha_2\right](\alpha_1 + 2)}M$ and $\alpha_2 >4$ by the assumption in this case.
\smallskip

\textbf{Case 4C: } ($\lambda_1 < 0, \lambda_2 > 0$, $  \alpha_1 < \alpha_2 < 4  < \alpha_3$ and $E <0$). This case is identical to \textbf{Case 3C} but with the signs of $\lambda_1$ and $\lambda_2$ flipped, thus, we instead apply \eqref{E:Youngs_general} to {$\|u\|_{\alpha_1+2}^{\alpha_1+ 2}$}, obtaining

{\begin{align*}
    y^\prime(t) &= -2\|u_x\|^2_{L^2} - \frac{\lambda_1\alpha_1}{\alpha_1 + 2}\|u\|^{\alpha_1 + 2}_{L^{\alpha_1 + 2}} - \frac{\lambda_2\alpha_2}{\alpha_2 + 2}\|u\|^{\alpha_2 + 2}_{L^{\alpha_2 + 2}} - \frac{\lambda_3\alpha_3}{\alpha_3 + 2}\|u\|^{\alpha_3 + 2}_{L^{\alpha_3 + 2}} \\
    & \geq -2\|u_x\|^2_{L^2} - \frac{\lambda_1\alpha_1}{\alpha_1 + 2}\|u\|^{\alpha_1 + 2}_{L^{\alpha_1 + 2}} - \frac{\lambda_2\alpha_2}{\alpha_2 + 2}\|u\|^{\alpha_2 + 2}_{L^{\alpha_2 + 2}}  \\
    &\hspace{1in}+ \alpha_3 \left[\frac{1}{2} \|u_x\|^2_2 + \frac{\lambda_1}{\alpha_1 + 2}\|u\|^{\alpha_1 + 2}_{L^{\alpha_1 + 2}} + \frac{\lambda_2}{\alpha_2 + 2}\|u\|^{\alpha_2 + 2}_{L^{\alpha_2 + 2}} - E\right]\\
    &= \left[\frac{\alpha_3}{2} - 2\right]\|u_x\|^2_{L^2} + \underbrace{\underbrace{\frac{\lambda_1(\alpha_3 - \alpha_1)}{\alpha_1 + 2}}_{A}\|u\|^{\alpha_1 + 2}_{L^{\alpha_1 + 2}}}_{*} + \underbrace{\frac{\lambda_2(\alpha_3 - \alpha_2)}{\alpha_2 + 2}\|u\|^{\alpha_2 + 2}_{L^{\alpha_2 + 2}}}_{\text{positive}}  - E\alpha_3.
\end{align*}
}
Then for any $\delta > 0$, we can apply 
\eqref{E:Youngs_general} to $|*|$ to get 
\begin{equation}
    |*| \leq |A|(C(\delta)\|u\|_{L^2}^2 + \delta\|u\|^{\alpha_2+2}_{L^{\alpha_2+2}}),
\end{equation}
and thus, we choose $\delta=\delta_{4C}>0$ such that the following condition holds 
\begin{equation}\label{E:delta-4C}
\delta_{4C} \leq \frac{|\lambda_2|}{|\lambda_1|}\frac{(\alpha_3 - \alpha_2)}{(\alpha_3 - \alpha_1)} \frac{(\alpha_1 +2)}{(\alpha_2 + 2)}. 
\end{equation}
Thus,
\begin{align*}
    y^\prime(t) &\geq \left[\frac{\alpha_3}{2} - 2\right]\|u_x\|^2_{L^2} + \frac{\lambda_1(\alpha_3-\alpha_1)}{\alpha_1 + 2}(C(\delta)\|u\|_{L^2}^2 + \delta\|u\|^{\alpha_2+2}_{L^{\alpha_2+2}}) + \frac{\lambda_2(\alpha_3 - \alpha_2)}{\alpha_2 + 2}\|u\|^{\alpha_2 + 2}_{L^{\alpha_2 + 2}} - E\alpha_3 \\
    &\geq \left[\frac{\alpha_3}{2} - 2\right]\|u_x\|^2_{L^2} + \frac{\lambda_1C(\delta)(\alpha_3-\alpha_1)}{\alpha_1 + 2}M - E\alpha_3>0,
\end{align*}
provided $E < - \frac{|\lambda_1|C(\delta_{4C})(\alpha_3-\alpha_1)}{\alpha_3(\alpha_1 + 2)}M$ and $\alpha_3 >4$ by the assumption in this case.

Therefore, we have shown that $y^\prime(t) > 0$ in all cases, which concludes the proof of Theorem \ref{NegE_Bup_3term}. \qed

\begin{remark}\label{RMKGeneralize}
    In the same spirit, \textbf{Case 1} and \textbf{Case 2A} easily generalize for finitely many combined nonlinearities. Other cases can be more challenging to adapt, especially when there are various coefficients with alternating signs.
\end{remark}

\section{Blow-up for an infinite sum}\label{S:Proof_exp}
Upon computing the cases for Theorem \ref{NegE_Bup_3term}, we noticed that techniques from \textbf{Case 2A} and \textbf{Case 2C} could be combined to develop a criterion for an exponential nonlinearity, namely,
\begin{equation}{\label{ExpNLS}}
    \begin{cases}
        &iu_{t} + \partial_{x}^{2}u = - \sum_{k=0}^{\infty}d_k|u|^{\alpha_k}u, \\
        &u(x,0) = u_{0}, \\
    \end{cases}
\end{equation}
 where $(x,t) \in \R\times \R^+$, $d_k = \frac{1}{k!}$ and $\alpha_k = k$.

Using the notation from Section \ref{S:Prelim}, we re-introduce the energy and variance adjusted for the exponential nonlinearity
\begin{equation}
    E[u] = \frac{1}{2}\|u_x\|_{L^2}^2 - \sum_{k=0}^\infty \frac{1}{(k+2)k!} \|u\|_{L^{k+2}}^{k+2},
\end{equation}
and 
\begin{equation}
    V_{tt}[u] = -4y^\prime \equiv 8 \|u_x\|_{L^2}^2 - 4\sum_{k=1}^\infty \frac{k}{(k+2)k!} \|u\|_{L^{k+2}}^{k+2}.
\end{equation}

\subsection{Proof of Theorem \ref{NegE_Bup_Exp}}
The main idea for this argument is to break this sum into parts. The main difficulty with this technique is controlling the subcritical nonlinearities. Grouping and estimating them as in \textbf{Case 2C} of Theorem \ref{NegE_Bup_3term}, the remaining supercritical nonlinearities can be easily worked around as in \textbf{Case 2A}. Splitting the sum into subcritical, critical and supercritical powers, we write
\begin{align*}
    y^\prime &= -2\|u_x\|_{L^2}^2 + \sum_{k=1}^4 \frac{k}{(k+2)k!} \|u\|_{L^{k+2}}^{k+2} + \frac{5}{7(5!)} \|u\|_{L^{7}}^{7} + \frac{6}{8(6!)} \|u\|_{L^{8}}^{8}+ \sum_{k=7}^\infty \frac{k}{(k+2)k!} \|u\|_{L^{k+2}}^{k+2}.
   \end{align*}
Recalling the energy, we separate the $L^7$ norm, and then substitute into the above expression
\begin{equation}\label{L^6_energy}
     \frac{1}{7(5!)}\|u\|^{7}_{L^{7}} =\frac{1}{2} \|u_x\|^2_{L^2} - \frac{1}{2}M - \sum_{k=1}^4 \frac{1}{(k+2)k!} \|u\|_{L^{k+2}}^{k+2} - \frac{1}{8(6!)} \|u\|_{L^{8}}^{8} - \sum_{k=7}^\infty \frac{1}{(k+2)k!} \|u\|_{L^{k+2}}^{k+2} - E,
\end{equation}
\begin{equation}\label{E:y1}
    y^\prime = \frac{5-4}{2}\|u_x\|_{L^2}^2 + \sum_{k=1}^4 \underbrace{\overbrace{\frac{ (k-5)}{(k+2)k!}}^{A_k} \|u\|_{L^{k+2}}^{k+2}}_{\star}  + \frac{6-5}{8(6!)} \|u\|_{L^{8}}^{8} - \frac{5}{2}M +\sum_{k=7}^\infty \frac{ (k-5)}{(k+2)k!} \|u\|_{L^{k+2}}^{k+2} - 5 E,
\end{equation}
where we used the {$k=6$} term of the sum to cancel with $\delta_k\|u\|_8^8$.  
We now use the interpolation \eqref{E:Youngs_general} to upper bound the terms in the first sum
\begin{equation}\label{E:1}
    |\star| \leq |A_k|(C(\delta_k)\|u\|_2^2 + \delta_k\|u\|_8^8).
\end{equation}
For each $k = 1,2,3,4$, choose $\delta_k$ such that  
\begin{equation}\label{E:2a}
\delta_k|A_k| \leq \frac{1}{4} \frac1{8(6!)}.
\end{equation}

Since $A_1 = -\frac{4}{3}, \; A_2 = -\frac{3}{8}, \; A_3 = -\frac{1}{15}, \; A_4 = -\frac{1}{144}$, 
taking $\delta_1 \leq \frac{1}{30720}$, $\delta_2 \leq \frac{1}{8640}$, $\delta_3 \leq \frac{1}{1536}$, $\delta_4 \leq \frac{1}{160}$, would satisfy the inequality \eqref{E:2a}. 
Then for each $\delta_k$ we can take the sharp constant $C_k$ such that $a^{k+2} \leq C_k a^2 + \delta_k a^8$ for any $a > 0$ (see Remark \ref{R:2.2}); thus, we get $C_1 \geq \frac{5(160)^{1/5}}{3},  C_2 \geq 16\sqrt{5}, C_3 \geq  384, \text{ and } C_4 \geq  {\frac{102400}{27}}$.

Substituting these bounds into \eqref{E:y1}, we obtain
\begin{align*}
    y^\prime &\geq \frac{1}{2}\|u_x\|_{L^2}^2 - \sum_{k=1}^4 \frac{1}{4}\frac{1}{8(6!)}\|u\|_{L^8}^8 + \frac{1}{8(6!)} \|u\|_{L^{8}}^{8} 
 +\sum_{k=7}^\infty \frac{(k-5)}{(k+2)k!} \|u\|_{L^{k+2}}^{k+2}\\ 
 &\hspace{4cm} + \sum_{k=1}^4\frac{ (k-5)}{(k+2)k!}C(\delta_k)M -  \frac{5}{2}M - 5 E \\
    &\geq \frac{1}{2}\|u_x\|_{L^2}^2 -\Bigg[ \underbrace{\left(\sum_{k=1}^4\frac{ (5-k)}{(k+2)k!}C(\delta_k) + \frac{5}{2}\right)}_{\kappa^\prime}M + 5 E\Bigg],
\end{align*}

Therefore, if the energy is chosen such that 
\begin{equation}\label{E:EM-kappa1}
E[u_0] < \frac15 \kappa^\prime M[u_0] \equiv -\kappa \, M[u_0],
\end{equation}
then we obtain $y' >0$. Here, 
\begin{equation}\label{E:kappa}
\kappa = \frac{1}{5}\kappa^\prime \approx \frac{(73.986)}{5} \approx 14.797.
\end{equation}
Rewriting \eqref{E:EM-kappa1} with all terms on the left side, we obtain the condition \eqref{E:condition-exp}, thus, finishing the proof of Theorem \ref{NegE_Bup_Exp}.  \qed

\subsection{A variation of the technique.}
Here we show how to use this method not relying on the extra supercritical power. 
In that case, we separate the sum into two parts, with the second one starting from the next smallest supercritical power present in the sum. Using the example of an exponential, we separate into subcritical/critical term, a supercritical term (split into 2 parts), and the rest of the sum:
\begin{align*}
    y^\prime &= -2 \|u_x\|_{L^2}^2 + \sum_{k=1}^\infty \frac{k}{(k+2)k!} \|u\|_{L^{k+2}}^{k+2} \\
    &\geq -2\|u_x\|_{L^2}^2 + \sum_{k=1}^4 \frac{\theta k}{(k+2)k!} \|u\|_{L^{k+2}}^{k+2} + \frac{5\theta}{7(5!)} \|u\|_{L^{7}}^{7} + \frac{5(1-\theta)}{7(5!)} \|u\|_{L^{7}}^{7}+ \sum_{k=6}^\infty \frac{\theta k}{(k+2)k!} \|u\|_{L^{k+2}}^{k+2}.
\end{align*}

Similarly to the proof of Theorem \ref{NegE_Bup_3term}, we use the substitution
\begin{equation}\label{L^7_energy}
     \frac{1}{7(5!)}\|u\|^{7}_{L^{7}} = \left[\frac{1}{2} \|u_x\|^2_{L^2} - \frac{1}{2}M - \sum_{k=1}^4 \frac{1}{(k+2)k!} \|u\|_{L^{k+2}}^{k+2} - \sum_{k=6}^\infty \frac{1}{(k+2)k!} \|u\|_{L^{k+2}}^{k+2} - E\right],
\end{equation}
to obtain

\begin{equation*}
    y^\prime \geq \underbrace{\left[\frac{5\theta}{2}-2\right]}_{\epsilon}\|u_x\|_{L^2}^2 + \theta\sum_{k=1}^4 \underbrace{\overbrace{\frac{ (k-5)}{(k+2)k!}}^{A_k} \|u\|_{L^{k+2}}^{k+2}}_{\star}  + \frac{(1-\theta)}{7(4!)} \|u\|_{L^{7}}^{7} - \frac{5\theta}{2}M +\sum_{k=6}^\infty \frac{\theta (k-5)}{(k+2)k!} \|u\|_{L^{k+2}}^{k+2} - 5\theta E.
\end{equation*}
We require $\frac{5\theta - 4}{2} > 0$, thus, $\theta > \frac{4}{5}$, i.e., $\frac{4}{5} < \theta < 1$. Then notice that, for $k = 1,2,3,4$, we can bound
\begin{equation}\label{E:1b}
    |\star| \leq |A_k|(C(\delta_k)\|u\|_2^2 + \delta_k\|u\|_7^7).
\end{equation}
Since there are 4 terms in the sum with $A_k$'s, we choose each $\delta_k$ such that for $k = 1,2,3,4$, 
\begin{equation}\label{E:2b}
\delta_k|A_k| \leq \frac{1}{4} \frac1{7(4!)}  \frac{(1-\theta)}{\theta}. 
\end{equation} 
Combining the estimates, we obtain

\begin{align*}
    y^\prime &\geq \epsilon\|u_x\|_{L^2}^2 - \sum_{k=1}^4 \frac{1}{4}\frac{(1-\theta)}{7(4!)}\|u\|_{L^7}^7 + \frac{(1-\theta)}{7(4!)} \|u\|_{L^{7}}^{7} 
 +\sum_{k=6}^\infty \frac{\theta (k-5)}{(k+2)k!} \|u\|_{L^{k+2}}^{k+2}\\ 
 &\hspace{4cm} + \sum_{k=1}^4\frac{\theta (k-5)}{(k+2)k!}C(\delta_k)M -  \frac{5\theta}{2}M - 5\theta E \\
    &\geq \epsilon\|u_x\|_{L^2}^2 -\theta\left( \sum_{k=1}^4\frac{ (5-k)}{(k+2)k!}C(\delta_k)M + \frac{5}{2}M + 5 E\right).
\end{align*}

For simplicity of the statement, choose $\kappa_1 = \max\{C(\delta_i), i=1,2,3,4\}$, then 
$$
\sum_{k=1}^4\frac{ (k-5)}{(k+2)k!}C(\delta_k) \leq - \frac{1283}{720} \kappa_1 \approx -1.78 \kappa_1.
$$ 
We conclude that $y^\prime(t) > 0$ for all time $t \in (0,T^*)$, provided the following condition holds
$$ 
E[u_0] < -\left(\frac{1283}{720} \kappa_1 + 2 \right)M[u_0] \quad \mbox{or} \quad E[u_0]+ \left(\frac{1283}{720} \kappa_1 + 2 \right) M[u_0] < 0,
$$
which gives an alternative proof to 
Theorem \ref{NegE_Bup_Exp} with a slightly larger constant, but it shows how to handle the splitting not relying on an extra supercritical term. \qed

\subsection{A proof of Corollary \ref{Generalization}}
With some restriction on coefficients $d_k$, we notice that Theorem \ref{NegE_Bup_Exp} can easily be generalized for any converging infinite sum or a finite number of terms, provided certain conditions on the powers and the coefficients are posed. Indeed, let the nonlinearity $\mathcal{N}(u)  = \sum_{k=1}^N d_k|u|^{\alpha_k}$, $0<N\leq \infty$, be such that $d_k \geq 0 \; \forall k \in \{1,2,...,N \}$ and for some $k^*$, $d_{k^*} > 0$, $\alpha_{k^*} > 4$ and $\alpha_1 < \alpha_2 < ... < \alpha_{k^*} < ... < \alpha_N$. For a finite number of nonlinear terms, it is more optimal to take $\alpha_{k^*}$ to be the first occurrence of a \textit{super-critical} exponent.
 If $\alpha_{k^*} > 4$ and is the last term in the sum, a special case for this proof is outlined in the subsection \ref{Special}. 
We proceed with showing that $y^\prime > 0$ for the Cauchy problem in \eqref{ExpNLS} in the usual way.
\begin{align*}
    y^\prime  &= -2 \|u_x\|_{L^2}^2 + \sum_{k=1}^N d_k \frac{\alpha_k}{\alpha_k + 2 }\|u\|_{L^{\alpha_k+2}}^{\alpha_k + 2} \\ 
    &= -2 \|u_x\|_{L^2}^2 + \sum_{k=1}^{k^*-1} d_k \frac{\alpha_k}{\alpha_k + 2 }\|u\|_{L^{\alpha_k+2}}^{\alpha_k + 2} + d_{k^*} \frac{\alpha_{k^*}}{\alpha_{k^*} + 2 }\|u\|_{L^{\alpha_{k^*}+2}}^{\alpha_{k^*} + 2} + \sum_{k=k^*+1}^N d_k \frac{\alpha_k}{\alpha_k + 2 }\|u\|_{L^{\alpha_k+2}}^{\alpha_k + 2}.
\end{align*}
Recalling the energy, we use the friendly substitution for the norm with $k^\ast$
\begin{equation}\label{kstarsub}
    \frac{d_{k^*} }{\alpha_{k^*} + 2 }\|u\|_{L^{\alpha_{k^*}+2}}^{\alpha_{k^*} + 2} = \frac{1}{2}\|u_x\|_{L^2}^2 - \sum_{k=1}^{k^*-1}  \frac{d_k}{\alpha_k + 2 }\|u\|_{L^{\alpha_k+2}}^{\alpha_k + 2} - \sum_{k=k^*+1}^N \frac{d_k}{\alpha_k + 2 }\|u\|_{L^{\alpha_k+2}}^{\alpha_k + 2} - E
\end{equation}
to write
\begin{equation}\label{general_step1}
    y^\prime = \left[\frac{\alpha_{k^*}-4}{2}\right] \|u_x\|_{L^2}^2 + \sum_{k=1}^{k^*-1} d_k \frac{\alpha_k - \alpha_{k^*}}{\alpha_k + 2 }\|u\|_{L^{\alpha_k+2}}^{\alpha_k + 2} + \sum_{k=k^*+1}^N d_k \frac{\alpha_k- \alpha_{k^*}}{\alpha_k + 2 }\|u\|_{L^{\alpha_k+2}}^{\alpha_k + 2} - E\alpha_{k^*}.
\end{equation}

Note that any finite number of coefficients $d_k$ can be zero (except for $d_{k^\ast}$) and will not change the computation. Also, starting from $k^\ast +2$ index, the coefficients $d_k$ (the tail) can be zero. 
\smallskip

To proceed, we notice that the third term in \eqref{general_step1} is strictly positive, providing the opportunity to bound it below by $0$. However, the second term is strictly negative and we would like to use some of the positive terms from the third part to help control the negative ones. For the finite case, we require at least $N \geq k^*+1$. In the infinite case, we choose to keep the first term of the sum and bound away the rest, but note that the choice is arbitrary and we may choose to keep any finite number of terms. Thus, we have
\begin{align*}
    y^\prime &\geq \left[\frac{\alpha_{k^*}-4}{2}\right] \|u_x\|_{L^2}^2 + \underbrace{\sum_{k=1}^{k^*-1} \overbrace{d_k \frac{\alpha_k - \alpha_{k^*}}{\alpha_k + 2 }}^{A_k}\|u\|_{L^{\alpha_k+2}}^{\alpha_k + 2}}_{(\star)} + d_{k^* + 1} \frac{\alpha_{k^* + 1}- \alpha_{k^*}}{\alpha_{k^* + 1} + 2 }\|u\|_{L^{\alpha_{k^* + 1}+2}}^{\alpha_{k^* + 1} + 2} - E\alpha_{k^*}
\end{align*}
For $k = 1,2,...,k^*-1$, we bound the $(\ast)$ term  via interpolation
\begin{equation}\label{E:1Inf}
    |(\star)| \leq |A_k|(C(\delta_k)\|u\|_2^2 + \delta_k\|u\|_{L^{\alpha_{k^* + 1}+2}}^{\alpha_{k^* + 1} + 2}).
\end{equation}
Choose each $\delta_k$ such that for $k = 1,2,...,k^*-1$, we get
\begin{equation}\label{E:2inf}
\delta_k|A_k| \leq \frac{1}{k^*-1}  \frac{d_{k^* + 1}(\alpha_{k^* + 1}- \alpha_{k^*})}{\alpha_{k^* + 1} + 2 }. 
\end{equation} 
Using the fact that $\alpha_k - \alpha_{k^*} < 0$ for $k < k^*$, we deduce
\begin{align*}
    y^\prime &\geq \left[\frac{\alpha_{k^*}-4}{2}\right] \|u_x\|_{L^2}^2 - \sum_{k=1}^{k^*-1} \frac{1}{k^*-1}  \frac{d_{k^* + 1}(\alpha_{k^* + 1}- \alpha_{k^*})}{\alpha_{k^* + 1} + 2 } + d_{k^* + 1} \frac{\alpha_{k^* + 1}- \alpha_{k^*}}{\alpha_{k^* + 1} + 2 }\|u\|_{L^{\alpha_{k^* + 1}+2}}^{\alpha_{k^* + 1} + 2}\\ &\hspace{4.5cm} + \sum_{k=1}^{k^*-1}d_k \frac{\alpha_k - \alpha_{k^*}}{\alpha_k + 2 }C(\delta_k)M- E\alpha_{k^*} \\
    &= \left[\frac{\alpha_{k^*}-4}{2}\right] \|u_x\|_{L^2}^2 + \sum_{k=1}^{k^*-1}d_k \frac{\alpha_k - \alpha_{k^*}}{\alpha_k + 2 }C(\delta_k)M- E\alpha_{k^*}.
\end{align*}
Therefore, $y^\prime > 0$, provided
\begin{equation*}
     E\alpha_{k^*} + \sum_{k=1}^{k^*-1}d_k \frac{ \alpha_{k^*}-\alpha_k }{\alpha_k + 2 }C(\delta_k)M < 0, 
\end{equation*}
concluding our proof. \qed

\subsubsection{The special case.}\label{Special}
The computation proceeds differently in the case where $\alpha_{k^*} > 4$ and is the last term in the sum. Here, we have
\begin{align*}
    y^\prime  &= -2 \|u_x\|_{L^2}^2 + \sum_{k=1}^{k^*} d_k \frac{\alpha_k}{\alpha_k + 2 }\|u\|_{L^{\alpha_k+2}}^{\alpha_k + 2} \\ 
    &= -2 \|u_x\|_{L^2}^2 + \sum_{k=1}^{k^*-1} d_k \frac{\alpha_k}{\alpha_k + 2 }\|u\|_{L^{\alpha_k+2}}^{\alpha_k + 2} + d_{k^*} \frac{\alpha_{k^*}\theta}{\alpha_{k^*} + 2 }\|u\|_{L^{\alpha_{k^*}+2}}^{\alpha_{k^*} + 2} + d_{k^*} \frac{\alpha_{k^*}(1-\theta)}{\alpha_{k^*} + 2 }\|u\|_{L^{\alpha_{k^*}+2}}^{\alpha_{k^*} + 2} .
    \end{align*}
Then using \eqref{kstarsub}, for $\frac{4}{\alpha_{k^*}} < \theta < 1$,
\begin{equation}\label{general_step1b}
    y^\prime = \left[\frac{\alpha_{k^*}\theta-4}{2}\right] \|u_x\|_{L^2}^2 + \sum_{k=1}^{k^*-1} \underbrace{\overbrace{d_k \frac{(\alpha_k - \alpha_{k^*}\theta)}{\alpha_k + 2 }}^{A_k}\|u\|_{L^{\alpha_k+2}}^{\alpha_k + 2}}_{(\star)} + d_{k^*} \frac{\alpha_{k^*}(1-\theta)}{\alpha_{k^*} + 2 }\|u\|_{L^{\alpha_{k^*}+2}}^{\alpha_{k^*} + 2} - E\alpha_{k^*}\theta.
    \end{equation}
Note that, for $k = 1,2,...,k^*-1$, we interpolate to bound the $(\ast)$
\begin{equation}\label{E:1Infb}
    |(\star)| \leq |A_k|(C(\delta_k)\|u\|_2^2 + \delta_k\|u\|_{L^{\alpha_{k^*}+2}}^{\alpha_{k^*} + 2}).
\end{equation}
Choose each $\delta_k$ such that for $k = 1,2,...,k^*-1$, we have
\begin{equation}\label{E:2infb}
\delta_k|A_k| \leq \frac{d_{k^*}}{k^*-1}  \frac{\alpha_{k^*}(1-\theta)}{\alpha_{k^*} + 2 }. 
\end{equation}
Using the fact that $\alpha_k - \alpha_{k^*}\theta < 0$ for $k < k^*$, we deduce
\begin{align*}
    y^\prime &\geq  \left[\frac{\alpha_{k^*}-4}{2}\right] \|u_x\|_{L^2}^2 + \sum_{k=1}^{k^*-1}d_k \frac{\alpha_k - \alpha_{k^*}\theta}{\alpha_k + 2 }C(\delta_k)M- E\alpha_{k^*}\theta,
\end{align*}
and hence, $y^\prime > 0$, provided
\begin{equation*}
     E + \frac{1}{\alpha_{k^*}\theta}\sum_{k=1}^{k^*-1}d_k \frac{ \alpha_{k^*}\theta -\alpha_k }{\alpha_k + 2 }C(\delta_k)M < 0, 
\end{equation*}
concluding the proof of Corollary \ref{Generalization}. \qed

\section{Blow-up with positive energy}\label{S:Proof_posEnergy}
In this section, we develop a blow-up criterion in the style of {Holmer-Platte-Roudenko} \cite{HRP2010}, or its generalization by Duyckaerts-Roudenko \cite{DR2015}. We consider a double nonlinearity
\begin{equation}{\label{CombinedNLS}}
    \begin{cases}
        &iu_{t} + \partial_{x}^{2}u + \epsilon_1|u|^{p-1}u + \epsilon_2|u|^{q-1}u = 0, \\
        &u(x,0) = u_{0} \in H^1(\mathbb R), \\
    \end{cases}
\end{equation}
 where $(x,t) \in \R\times \R$, $\epsilon_1 < 0$ (the smaller nonlinearity is defocusing), $\epsilon_2 > 0$ (the larger nonlinearity is focusing), $q>5$ and $1 < p < q$. We begin with some useful inequalities, then proceed with the simpler case of $p=5$. For brevity, we write $M = M[u_0]$ and $E = E[u_0]$ (as these quantities are conserved for a given $u_0$), we also will occasionally use abbreviation $V$ for the variance $V[u](t)$.

\subsection{Useful Inequalities}

The first one is the Gagliardo-Nirenberg inequality in 1D for $p > 1$, 
\begin{equation}\label{GNIneq}
    \norm{u}^{p+1}_{L^{p+1}} \leq C_{GN}\norm{u_x}^{\frac{p-1}{2}}_{L^2}\norm{u}^{\frac{p-1}{2}+2}_{L^2}, 
\end{equation}
where $C_{GN}$ is a sharp constant (e.g., see \cite{W1983}, \cite{DR2015}).

The next inequality is obtained from \cite[Proposition 4.3]{DR2015} where it is presented and directly proved for dimensions $N \geq 1$. In \cite{HRP2010}, for the case $p = 3$. For our purposes, it is sufficient to consider the case for one dimension,
\begin{equation}\label{DRIneq}
    \norm{u}_{L^2} \leq \tilde C\left(\norm{xu}^{\frac{p-1}{2}}_{L^2}\norm{u}^{p+1}_{L^{p+1}}\right)^{\frac{2}{3p+1}},
\end{equation}
where $\tilde C$ denotes
{\small
\begin{equation}\label{DRIneqConst}
    \tilde C = \left(\frac{3p+1}{2(p+1)}\right)^{\frac{2}{3p+1}}
    \left(\frac{\pi(3p+1)}{p-1}\right)^{\frac{p-1}{2(3p+1)}}\left(\frac{\Gamma(\frac{p+1}{p-1})}{\Gamma(\frac{p+1}{p-1} + \frac{1}{2})}\right)^{\frac{p-1}{3p+1}}.
\end{equation}
}
Note that from \eqref{DRIneq}, the definitions of mass \eqref{E:Mass} and variance \eqref{Variance} and denoting $C^* = \tilde C^{\frac{3p+1}{2}}$, we obtain
\begin{equation}\label{DRIneqChange}
    \frac{M^{\frac{p-1}{4} + \frac{p+1}{2}}}{V(t)^\frac{p-1}{4}C^*} \leq \|u(t)\|^{p+1}_{L^{p+1}}. 
\end{equation}
We also make use of the uncertainty principle for the solution $u(t)$
\begin{equation}\label{GradMassVar}
    \norm{u_x(t)}^2_{L^2} \geq \frac{1}{4}\left[\frac{M^2}{V(t)} + \frac{V_t^2(t)}{4V(t)}\right],
\end{equation}
the derivation of which is given, for example, in \cite[Section 4.1]{DR2015}. 

\subsection{\textbf{Proof of Theorem \ref{DR_Bup}}}

 \subsubsection{\bf \underline{Critical $p=5$}}
 Recall the virial identity, which we deduce from \eqref{Vdt2} with $\alpha_1 = p-1$, $\alpha_2 = q-1$, $d_1 = \epsilon_1$ and $d_2 = \epsilon_2$ (the rest of $\alpha_j, d_j = 0$), and for conciseness we drop the time dependence to obtain
 \begin{equation}
    V_{tt} = 8\lVert u_x \rVert^2_{L^2} - \frac{4\epsilon_1(p-1)}{p+1}\lVert  u \rVert^{p+1}_{L^{p+1}} - \frac{4\epsilon_2(q-1)}{q+1}\lVert u \rVert^{q+1}_{L^{q+1}}.
\end{equation}

We first study the case with $p=5, \; q>5, \; \epsilon_1 < 0,$ and $\epsilon_2 > 0$. We re-write the energy and second derivative of the variance as follows
\begin{equation}\label{p5Energy}
E[u] = \frac{1}{2}\| u_x\|_{L^2}^2 + \frac{|\epsilon_1|}{6}\|u\|_{L^{6}}^{6} - \frac{\epsilon_2}{q+1}\|u\|_{L^{q+1}}^{q+1}
\end{equation}
and 
\begin{equation}\label{p5Variancett}
V_{tt}[u] = 8\| u_x\|_{L^2}^2 + \frac{8|\epsilon_1|}{3}\|u\|_{L^{6}}^{6} - \frac{4\epsilon_2(q-1)}{q+1}\|u\|_{L^{q+1}}^{q+1}.
\end{equation}
(Notice that if we would have $\epsilon_2 < 0$ (besides $\epsilon_1 <0$) and $q > 1$, then $V_{tt}$ would never be negative.) 

Re-scaling the energy to a factor of $4(q-1)$ and solving for the $\|u\|_{L^{q+1}}$ term, we obtain
\begin{equation*}
     \frac{4(q-1)\epsilon_2}{q+1}\|u\|_{L^{q+1}}^{q+1}= 2(q-1)\| u_x\|_{L^2}^2 + \frac{4(q-1)|\epsilon_1|}{6}\|u\|_{L^{6}}^{6}-4(q-1)E[u].
\end{equation*}
Substituting this into \eqref{p5Variancett} and simplifying, we get
\begin{equation*}
V_{tt}[u] = 2\left[5- q\right]\| u_x\|_{L^2}^2 + \frac{2|\epsilon_1|}{3}\left[5-q\right]\|u\|_{L^{6}}^{6} +4(q-1)E[u].
\end{equation*}
Notice that if $E<0$, then $V_{tt}<0$ immediately. Thus, we assume for the following estimates that $E > 0$. Using inequalities \eqref{DRIneqChange}
and \eqref{GradMassVar} with $C^* = \frac43\pi^2$ as defined in \eqref{DRIneqConst}, we obtain
\begin{align*}
    V_{tt} &\leq 4(q-1)E-\frac{(q-5)}{2}\left[\frac{M^2}{V} + \frac{V_t^2}{4V}\right] - \frac{2|\epsilon_1|(q-5)}{3}\left[\frac{M^4}{VC^*}\right] \\
    &= 4(q-1)E - \frac{(q-5)M^2}{2V}\left[1 + \frac{4|\epsilon_1|M^2}{3C^*}\right] -\frac{(q-5)}{8}\frac{V_t^2}{V}.
\end{align*}
Denoting the expression in the square brackets by $H$, i.e., $H:=1 + \frac{4|\epsilon_1|M^2}{3C^*}$, we re-write
\begin{align}\label{E:Vtt}
    VV_{tt} + \frac{(q-5)}{8}V_t^2 &\leq  4(q-1)EV - \frac{(q-5)}{2} H M^2.
\end{align}
Let $\alpha = \frac{q-5}{8}$ and 
observe that if $V^{\alpha+1} =B$, then $B_{tt} = (V^{\alpha+1})_{tt} = (\alpha+1) V^{\alpha-1} [VV_{tt} + \alpha V_t^2]$, which is the left-hand side in \eqref{E:Vtt} multiplied by $(\alpha+1) V^{\alpha-1}$.
Thus, we can re-write \eqref{E:Vtt} with the 
rescaling
\begin{itemize}
    \item []
    \begin{equation}\label{B}
        V = B^{\frac{1}{\alpha + 1}},
    \end{equation}
\item[]
\begin{equation}\label{E:zeros}
B_{tt} \leq (\alpha+1) B^{\frac{\alpha-1}{\alpha+1}} 
\Big(4(q-1)E B^{\frac1{\alpha+1}} - 4\alpha H M^2 \Big),
\end{equation}

\end{itemize}
and with some simplification, it yields
\begin{align}\label{BttUpperBound}
    B_{tt}  &\leq  4(\alpha + 1)(q-1)EB^{\frac{\alpha}{\alpha + 1}} - 4 \alpha(\alpha + 1) M^2HB^{\frac{\alpha -1}{\alpha + 1}}.
\end{align}

Notice that if $B_{tt}$ is bounded above by the RHS of \eqref{BttUpperBound}, then we may introduce a non-negative perturbation function $-g^2(t)$ (which is squared to {ensure the non-negativity}) that yields the equality
\begin{align}\label{BttEquality}
    B_{tt}  &=  4(\alpha + 1)(q-1)EB^{\frac{\alpha}{\alpha + 1}} - 4\alpha(\alpha + 1) M^2HB^{\frac{\alpha -1}{\alpha + 1}} - g^2(t).
\end{align}

There is a relevant physical interpretation that involves Newtons second law $F_{net} = ma$. Indeed take the mass of a particle $m = 1$, and recall that acceleration is simply the second derivative of the position of that particle $x_{tt}$. Then we have
\begin{equation}\label{Newton2ndLaw}
    x_{tt} = F_1 + ... + F_n, 
\end{equation}
where we write out $F_{net}$ as the sum of forces for emphasis. Since (in our case) $B_{tt}$ is a re-scaling of the variance $V$, we view the variance as the position of the particle, and hence, we have two forces seemingly contributing to its \textit{acceleration}, namely, 
\begin{equation}\label{Force1}
    F_1 = 4(\alpha + 1)(q-1)EB^{\frac{\alpha}{\alpha + 1}} - 4\alpha(\alpha + 1) M^2HB^{\frac{\alpha -1}{\alpha + 1}},
\end{equation}
and
\begin{equation}\label{Force2}
    F_2 = -g^2(t).
\end{equation}

Recall again that $B$ is a rescaling of the variance $V$, whereas if $V \to 0$, then $B \to 0$. We consider that $B \to 0$ if $B_{tt} < 0$, which depends on the signs of \eqref{Force1} and \eqref{Force2}. Notice that if $F_1 < 0$, then by our construction of $F_2$ (i.e., $g^2(t)$ is {nonnegative}, thus, $F_2$ always pulls the particle to the origin), $B \to 0$ surely. However, if $F_1 > 0$, then we must introduce a criterion to ensure $B \to 0$. To do so, we write $F_1$ in terms of the potential energy 
$$F_1 = -\frac{\partial U}{\partial B},$$
which acts like an attracting force towards the origin. We then write an analogous version of \eqref{Newton2ndLaw},
\begin{equation}\label{BversionNewtons2ndLaw}
    B_{tt} + \frac{\partial U}{\partial B} = -g^2(t).
\end{equation}
Multiplying by $B_t$ and integrating in time, we get
\begin{equation}\label{ParticleEnergyBehavior}
   \frac{ B_{t}^2}{2} + U(B) = -\int g^2(t)B_t dt.
\end{equation}
We then write that the energy $\mathcal{E}$ of the particle is given by
\begin{equation}\label{ParticleEnergy}
    \mathcal{E}(t) = \frac{ B_{t}^2}{2} + U(B). 
\end{equation}
Recall, the second force \eqref{Force2} only aids in pulling our particle towards the origin, thus it is sufficient to analyze only $B_{tt} = F_1$, for which \eqref{ParticleEnergy} is conserved by \eqref{ParticleEnergyBehavior}.
Integrating over $B$ in $F_1$, we obtain
\begin{equation}\label{Potential}
    U(B) = -16(\alpha+1)^2 E B^{\frac{2\alpha + 1}{\alpha + 1}} + 2(\alpha+1)^2M^2HB^{\frac{2\alpha}{\alpha + 1}},  
\end{equation}
where any extra constant gained is zero, since we consider $U(0) = 0$. 

We then write the energy of the particle $B$ in \eqref{ParticleEnergy} as
\begin{equation}\label{ParticleEnergy_B_p5}
    \mathcal{E}(t) = \frac{ B_{t}^2}{2} 
    - 16 (\alpha+1)^2
    E B^{\frac{2\alpha + 1}{\alpha + 1}} + 2(\alpha+1)^2M^2HB^{\frac{2\alpha}{\alpha + 1}}. 
\end{equation}

We proceed by outlining the motivation for our blow up criteria as suggested in \cite{Lushnikov1995} and discussed with further details in \cite{HRP2010}, \cite{DR2015}.

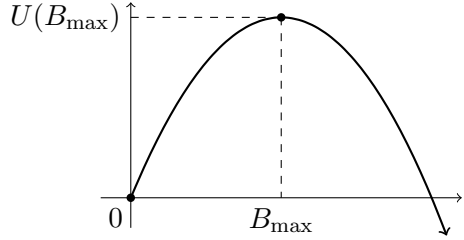
\begin{figure}[H]
    \centering
    \begin{tikzpicture}[scale=4]
        
        \draw[->] (-0.1,0) -- (1.1,0) node[right] {};
        \draw[->] (0,-0.1) -- (0,0.65) node[above] {};

        \draw[thick, domain=0:1.05, smooth, variable=\x, ->] 
            plot ({\x}, {0.6*(1 - (2*\x - 1)^2)});

        \fill (0,0) circle (0.4pt);
        
        \fill (0.5,0.6) circle (0.4pt);

        \draw[dashed] (0.5,0.6) -- (0.5,0);
        \draw[dashed] (0,0.6) -- (0.5,0.6);

        \node[below] at (-.05,0) {$0$};
        \node[below] at (1,0) {};
        \node[below] at (0.5,0) {$B_{\text{max}}$};
        \node[left] at (0,0.6) {$U(B_{\text{max}})$};

    \end{tikzpicture}
    \caption{Sketch of $U(B)$ with maximum at $B_{\text{max}}$}
    \label{F:PotentialEnergyGraph}
\end{figure}
Figure \ref{F:PotentialEnergyGraph} is a sketch of the potential \eqref{Potential}. If the particle starts to the left of the bump, $B < B_{max}$, it would never overcome the maximum point, if it does not have enough energy: $\mathcal{E}(0) < U(B_{max})$, and thus, $B \to 0$. However, if the particle starts from the right of the bump, $B > B_{max}$, we have two possibilities:
\begin{enumerate}
    \item If we have enough energy $\mathcal{E}(0) > U(B_{max})$ and $B_t < 0$ (that is, the particle is moving to the left), then the particle can overcome the bump, and $B \to 0$. 
    \item If the particle does not have enough energy $\mathcal{E}(0) {\leq} U(B_{max})$, or an incorrect direction of motion $B_t  { > } 0$, we cannot conclude that collapse occurs.
\end{enumerate}
We can organize the above conditions to obtain {three} sufficient criteria for collapse:
\begin{center}
\begin{varwidth}{\textwidth}
    \begin{enumerate}
  \item[{\bf(I)}]$\mathcal{E}(0) < U(B_{max})$ and $B(0) < B_{max}$, \hspace{4cm}
    \item[{\bf (II)}] $\mathcal{E}(0) > U(B_{max})$ and $B_t(0) < 0,$ \hspace{4cm}
    {\item[{\bf (III)}] $\mathcal{E}(0) = U(B_{max})$, $B(0) < B_{max}$, and $B_t(0) < 0.$}
    \end{enumerate}
\end{varwidth}
\end{center}

To write out conditions explicitly, we must first compute $B_{max}$, which can be done by finding the critical points of \eqref{Potential} in the typical fashion (e.g., finding zeros of \eqref{Force1} or \eqref{E:zeros}), for which we obtain
\begin{align*}
 4(\alpha + 1) B^{\frac{\alpha-1}{\alpha + 1}}\left( (q-1)EB^{\frac{1}{\alpha + 1}} - \alpha M^2H\right) = 0,
\end{align*}
and thus, the maximum value is attained at
\begin{align}\label{E:B_max_p5}
    B_{max} = \left(\frac{\alpha H M^2}{(q-1)E}\right)^{\alpha + 1}.
\end{align}
Hence, the maximum value for our potential $U$ is given by
\begin{align}\label{MaxPotential_p5}
    U(B_{max}) 
    & = 8 (\alpha + 1)^2 \frac{M^2 H}{q-1} \left(\frac{\alpha M^2H}{(q-1)E}\right)^{2\alpha}.  
\end{align}

Rolling back the substitution \eqref{B} in \eqref{ParticleEnergy_B_p5}, gives
\begin{equation}\label{ParticleEnergy_p5}
    \mathcal{E}(t) = (\alpha+1 )^2 V^{2\alpha} \left[\frac12{ V_t^2} -{16 EV} +2M^2 H\right]. 
\end{equation}

Thus, to write the criteria (I) and (II) explicitly,
we plug in \eqref{MaxPotential_p5}, \eqref{E:B_max_p5} and ensure that $(2\alpha+1),H,M[u] \neq 0$, with $\alpha = \frac{q-5}8$, we get

\begin{align}\tag{A.1}\label{BupA1_p5}
V[u_0]^{\frac{q-5}4}\left[\frac14\frac{ V_t(0)^2}{M^2H} - 8 \frac{EV[u_0]}{M^2H} + 1\right] & < \frac{4}{q-1}\left(\frac{(q-5)}{8(q-1)} \frac{M^2H}{E}\right)^{\frac{q-5}4} \\
\tag{A.2}\label{BupA2_p5}
\qquad \mbox{with} \quad  V[u_0] & < \frac{(q-5)}{8(q-1)} \frac{M^2H}{E},
\end{align}      
\begin{align}\tag{B.1}\label{BupB1_p5} 
    V[u_0]^{\frac{q-5}4}\left[\frac14\frac{ V_t(0)^2}{M^2H} - 8 \frac{EV[u_0]}{M^2H} + 1\right] & > \frac{4}{q-1}\left(\frac{(q-5)}{8(q-1)} \frac{M^2H}{E}\right)^{\frac{q-5}4} \\
\quad\tag{B.2}\label{BupB2_p5}
\mbox{with} \quad    V[u_0]^\alpha V_t(0) &< 0,
\end{align}
and
{
\begin{align}\tag{C.1}\label{BupC1_p5} 
    V[u_0]^{\frac{q-5}4}\left[\frac14\frac{ V_t(0)^2}{M^2H} - 8 \frac{EV[u_0]}{M^2H} + 1\right] & = \frac{4}{q-1}\left(\frac{(q-5)}{8(q-1)} \frac{M^2H}{E}\right)^{\frac{q-5}4} \\
\quad\tag{C.2}\label{BupC2_p5}
\mbox{with} \quad  V[u_0] & < \frac{(q-5)}{8(q-1)} \frac{M^2H}{E} \; \mbox{and } \; V[u_0]^\alpha V_t(0) < 0.
\end{align}
}

To consolidate the results above into a straight-forward estimate, we consider the re-scaling $$B = B_{max} \,b(s),$$ where the time is rescaled as 
$$ 
s=\gamma t, \quad \gamma = \frac{E(q-1)2\sqrt{2}}{M\sqrt{\alpha(2\alpha+1)H}}. 
$$
Consequently, we apply this re-scaling  
into \eqref{ParticleEnergy_B_p5} and factor, yielding 
\begin{align*}
    \mathcal{E}(t)
    &=U(B_{max})\left[\frac{2\alpha}{(\alpha+1)^2}b_s^2 - 2\alpha b(s)^{\frac{2\alpha + 1}{\alpha + 1}} + (2\alpha+1)b(s)^\frac{2\alpha}{\alpha+1}\right].
\end{align*}

Setting {$\tilde{U}(b(s)) = -2\alpha b(s)^{\frac{2\alpha + 1}{\alpha + 1}} + (2\alpha+1)b(s)^\frac{2\alpha}{\alpha+1}$} and $\mathcal E(t) = U(B_{max}) \tilde{\mathcal E}(s)$, we have \eqref{ParticleEnergy} transform into
\begin{equation*}
    \tilde{\mathcal E}(s) = \frac{2\alpha}{(\alpha+1)^2}b_s^2 + \tilde{U}(b(s)).
\end{equation*}

Next, define $\tilde{V} = b^\frac{1}{\alpha + 1}$. Then we have $b_s = (\alpha+1)\tilde{V}^{\alpha}\tilde{V}_s$, and hence,
\begin{align*}
    \tilde{\mathcal E}(s) &= \frac{2\alpha}{(\alpha+1)^2}(\alpha+1)^2\tilde{V}^{2\alpha}\tilde{V}_s^{2} -2\alpha \tilde{V}^{2\alpha + 1} + (2\alpha+1)\tilde{V}^{2\alpha} 
    = {2}\alpha \tilde{V}^{2\alpha}(\tilde{V}_s^2 - \tilde{V} + \frac{2\alpha+1}{2\alpha}).
\end{align*}

For brevity, we introduce the function

\begin{equation}\label{CriteriaFunction}
f(X) = \sqrt{\frac{1}{2\alpha X^{2\alpha}} + X - \frac{2\alpha + 1}{2\alpha}}. 
\end{equation}

Then the conditions \eqref{BupA1_p5}-\eqref{BupA2_p5}, {\eqref{BupB1_p5}-\eqref{BupB2_p5}, and \eqref{BupC1_p5}-\eqref{BupC2_p5}} can be written as
\begin{center}
{\begin{varwidth}{\textwidth}
    \begin{enumerate}
        \item[(A)] $\tilde{\mathcal E}(0) < 1 \; \iff \; |\tilde{V}_s(0)| < f(\tilde{V}(0))$
        \item[(B)] $\tilde{\mathcal E}(0) > 1 \; \iff \; |\tilde{V}_s(0)| > f(\tilde{V}(0))$
        \item[(C)] $\tilde{\mathcal E}(0) = 1 \; \iff \; |\tilde{V}_s(0)| = f(\tilde{V}(0))$,
    \end{enumerate}
\end{varwidth}}
\end{center}
which is equivalent to 
\begin{center}
\begin{varwidth}{\textwidth}
    \begin{enumerate}
        \item $\tilde{V}(0) < 1$ and $-f(\tilde{V}(0)) < \tilde{V}_s(0) < f(\tilde{V}(0))$
        \item $\tilde{V}_s(0) {<} -f(\tilde{V}(0))$
        {\item $\tilde{V}(0) < 1$ and $\tilde{V}_s(0) =-f(\tilde{V}(0))$.}
    \end{enumerate}
\end{varwidth}
\end{center}
Combining the two cases into one, we obtain
\begin{equation*}
    \tilde{V}_s(0) < \begin{cases}
        f(\tilde{V}(0)) & \text{if} \; \tilde{V}(0) < 1 \\
        -f(\tilde{V}(0)) & \text{if} \; \tilde{V}(0) \geq 1.
    \end{cases}
\end{equation*}
Then, retracing our steps through the rescaling to solve for $V(t)$, we have 
\begin{equation}\label{E:V_rescaled_p5}
    V(t) = \frac{\alpha M^2H}{E(q-1)}\tilde{V}\left(\frac{2\sqrt{2}E(q-1)t}{M\sqrt{\alpha(2\alpha+1)H}}\right).
\end{equation}

Thus, the criterion for blow-up is
\begin{equation}\label{E:BUP_criteria_p5}
    \frac{V_t(0)}{2M\sqrt{H}} < \sqrt{\frac{q-5}{q-1}}g\left(\frac{8(q-1)E V[u_0]}{(q-5)M^2H}\right),
\end{equation}

where
\begin{equation}
g(X)=
    \begin{cases}
        f(X) & \text{if } 0<X< 1\\
        -f(X) & \text{if } X\geq1.
    \end{cases}
\end{equation}

\subsubsection{\underline{\bf General $p\neq 5$}} 

In this case (recalling $\epsilon_2>0$), from \eqref{Vdt2} and \eqref{E:Energy} we have
\begin{equation}\label{genpEnergy}
E[u] = \frac{1}{2}\|u_x\|_{L^2}^2 + \frac{|\epsilon_1|}{p+1}\|u\|_{L^{p+1}}^{p+1} - \frac{\epsilon_2}{q+1}\|u\|_{L^{q+1}}^{q+1},
\end{equation}
and 
\begin{equation}\label{genpVariancett}
V_{tt}[u] = 8\|u_x\|_{L^2}^2 + \frac{4|\epsilon_1|(p-1)}{p+1}\|u\|_{L^{p+1}}^{p+1} - \frac{4\epsilon_2(q-1)}{q+1}\|u\|_{L^{q+1}}^{q+1}.
\end{equation}
Re-scaling \eqref{genpEnergy} to a factor of $4(q-1)$ and solving for the $\|u\|_{L^{q+1}}$ term, we obtain
\begin{equation*}
     \frac{4(q-1)\epsilon_2}{q+1}\|u\|_{L^{q+1}}^{q+1}= 2(q-1)\| u_x\|_{L^2}^2 + \frac{4|\epsilon_1|(q-1)}{p+1}\|u\|_{L^{p+1}}^{p+1}-4(q-1)E[u].
\end{equation*}
Substituting the above into \eqref{genpVariancett} and simplifying, we obtain
\begin{equation*}
V_{tt}[u] = 2\left[5- q\right]\|u_x\|_{L^2}^2 - \frac{4|\epsilon_1|(q-p)}{p+1}\|u\|_{L^{p+1}}^{p+1} +4(q-1)E[u].
\end{equation*}
As in the case of $p = 5$ we utilize the inequalities \eqref{DRIneq} and \eqref{GradMassVar} and reorder appropriate terms to get
\begin{equation}\label{Gen_polynomial}
    VV_{tt} + \frac{(q-5)}{8}V_t^2 \leq  4(q-1)EV - \frac12{(q-5)}M^2 - \frac{4|\epsilon_1|(q-p)}{p+1}\left[\frac{M^{\frac{p-1}{4} + \frac{p+1}{2}}}{V^\frac{p-5}{4}C^*}\right].
\end{equation}
From here we split our reasoning into two cases: (\textbf{Case I}) keeping  the term $- \frac12{(q-5)}M^2$, (\textbf{Case II}) dropping this term by bounding it as $- \frac12{(q-5)}M^2 < 0$.
\medskip

\noindent \underline{\textbf{Case I:}} Multiplying \eqref{Gen_polynomial} by $(\alpha+1)V^{\alpha - 1}$, where $\alpha = \frac{q-5}{8}$,
{\small
\begin{align*}
(\alpha+1) V^{\alpha-1}    [VV_{tt} + \frac{(q-5)}{8}V_t^2] &\leq  
(\alpha+1) \left(4(q-1)E V^\alpha - \frac12 {(q-5) M^2} V^{\alpha-1} - \frac{4|\epsilon_1|(q-p)}{p+1} \frac{M^{\frac{p-1}{4} + \frac{p+1}{2}}}{C^*} V^{\alpha - \frac{p-1}{4}}\right),
\end{align*}
}
we obtain a {sum of powers} $\alpha$ in $V$, with the second term having larger power of $V$ than the third one if $p>5$, and vice versa, the third term having larger power than the second, if $p<5$. Then using the substitution as in \eqref{B} we obtain 
{
\begin{align*}
    B_{tt} \leq (\alpha+1) \left( 4(q-1)EB^{\frac{\alpha}{\alpha + 1}}  -
    \frac12 (q-5) M^2 B^{\frac{\alpha-1}{\alpha+1}} 
    - \frac{4|\epsilon_1|(q-p)}{(p+1)C^*}M^{\frac{p-1}{4} + \frac{p+1}{2}}B^{\frac{\alpha - \frac{p - 1}4}{(\alpha+1)}} \right).
\end{align*}
}
The argument follows from the physical motivation outlined in the case for $p=5$. Since it is identical, we omit the motivation for the technique and only apply the methods and estimates here. Thus, we define 

{
\begin{equation}\label{Long_potential_deriv}
    -\frac{\partial U}{\partial B} = 
    (\alpha+1) \left( 4(q-1)EB^{\frac{\alpha}{\alpha + 1}}  -
    \frac12 (q-5) M^2 B^{\frac{\alpha-1}{\alpha+1}} 
    - \frac{4|\epsilon_1|(q-p)}{(p+1)C^*}M^{\frac{p-1}{4} + \frac{p+1}{2}}B^{\frac{\alpha - \frac{p - 1}4}{(\alpha+1)}} \right),
\end{equation}
then integrating with respect to $B$,
{
\begin{equation}\label{Potential_genp_long}
    U(B) =  - (\alpha+1)^2
    \left( 16 EB^{\frac{2\alpha+1}{\alpha + 1}}
    {-} 2 M^2 B^{\frac{2\alpha}{\alpha + 1}}
    {-
     \frac{16 |\epsilon_1|}{(p+1){C^*}}{M^{\frac{p-1}{4}+\frac{p+1}{2}}} B^{\frac{q-p}{4(\alpha+1)}}} \right).
\end{equation}
}

Our goal now becomes finding the roots of {\cancel{the polynomial}} \eqref{Long_potential_deriv}. In the $p=5$ case this was straightforward. However, notice that without specific $p$ or $q$ we cannot express $B_{max}$ explicitly. Instead, by the intermediate value theorem, we know that it must {lie in a range} for two cases $p > 5$ and $1 < p < 5$ leading to the proposition.

\begin{proposition}\label{ZerosOfGenP}
Define the maximum between $\mathcal{L}$ and $\mathcal{M}$ as the lower bound of $B_{max}$ such that $$\mathcal{M} :=\left(\frac{|\epsilon_1|(q-p)M^{\frac{p-1}{4} + \frac{p+1}{2}}}{(p+1)(q-1)C^*E}\right)^\frac{4(\alpha+1)}{(p-1)} \mbox{ and }\; \mathcal{L}:= \left(\frac{(q-5) M^2}{8(q-1)E}\right)^{\alpha+1}.$$
Then, if $ 1 < p < 5$  
$$\max\{\mathcal{L},\mathcal{M}\} < B_{max} < \max\left\{\mathcal{M}\left(\Big(\frac{\mathcal{L}}{\mathcal{M}}\Big)^{\frac{1}{\alpha + 1}}   
    + 1\right)^{\frac{4(\alpha + 1)}{p-1}},\left( \mathcal{L}^{\frac{p-1}{4(\alpha+1)}} 
    + \mathcal{M}^{\frac{p-1}{4(\alpha+1)} }\right)^{\frac{4(\alpha + 1)}{p-1}}\right\}.$$
If $p > 5$, 
$$\max\{\mathcal{L},\mathcal{M}\} < B_{max} {\leq} \max\left\{2^{\frac{4(\alpha+1)}{p-1}}\mathcal{M},2^{\alpha+1}\mathcal{L}\right\}.$$
\end{proposition}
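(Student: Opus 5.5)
The plan is to reduce the equation for the critical point of $U$ to a scalar equation in $V=B^{\frac1{\alpha+1}}$, and then trap its root between explicit values using monotonicity.

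\textbf{Step 1: reduce to a scalar equation.} By construction, $B_{max}$ is the positive zero of the right-hand side of \eqref{Long_potential_deriv}. Write $X=B^{\frac{1}{\alpha+1}}$ and divide the bracket by $(\alpha+1)B^{\frac{\alpha-1}{\alpha+1}}=(\alpha+1)X^{\alpha-1}$. The zero condition becomes
\begin{equation*}
4(q-1)E\,X-\tfrac12(q-5)M^2-\tfrac{4|\epsilon_1|(q-p)}{(p+1)C^*}M^{\frac{p-1}{4}+\frac{p+1}{2}}X^{\frac{5-p}{4}}=0 .
\end{equation*}
Dividing by $4(q-1)E X$ and setting $\ell:=\mathcal{L}^{\frac1{\alpha+1}}$, $m:=\mathcal{M}^{\frac1{\alpha+1}}$, I expect this to reduce to
\begin{equation*}
h(X):=\frac{\ell}{X}+\Big(\frac{m}{X}\Big)^{\frac{p-1}{4}}=1 .
\end{equation*}
This uses the identity $m^{\frac{p-1}{4}}=\frac{|\epsilon_1|(q-p)M^{\frac{p-1}{4}+\frac{p+1}{2}}}{(p+1)(q-1)C^*E}$, which is exactly how $\mathcal{M}$ was defined. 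Since $E>0$ and $p>1$, $h$ is continuous and strictly decreasing on $(0,\infty)$, with $h\to\infty$ as $X\to0$ and $h\to0$ as $X\to\infty$. So there is a unique root $X^*$, and $B_{max}=(X^*)^{\alpha+1}$.

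\textbf{Step 2: confirm that $X^*$ is a maximum of $U$.} For $X<X^*$ the force $-\partial U/\partial B$ is negative, because the negative constant and $X^{\frac{5-p}{4}}$ terms dominate. For $X>X^*$ it is positive. Hence $U$ increases and then decreases, matching Figure \ref{F:PotentialEnergyGraph}.

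\textbf{Step 3: bounds on $X^*$.} By monotonicity, $X>X^*$ if and only if $h(X)<1$, so each bound reduces to evaluating $h$ at a candidate point.
\begin{itemize}
\item \emph{Lower bound.} Both terms of $h$ are positive, so at the root each is $<1$. This gives $X^*>\ell$ and $X^*>m$, i.e.\ $B_{max}>\max\{\mathcal L,\mathcal M\}$.
\item \emph{Upper bound for $p>5$ (in fact for any $p>1$).} Take $X_0=\max\{2\ell,\,2^{\frac4{p-1}}m\}$. Each term of $h(X_0)$ is at most $\frac12$, so $h(X_0)\le1$ and hence $X^*\le X_0$. Raising to the power $\alpha+1$ gives the stated bound $\max\{2^{\frac{4(\alpha+1)}{p-1}}\mathcal M,\,2^{\alpha+1}\mathcal L\}$, with non-strict inequality.
\item \emph{Upper bound for $1<p<5$.} Take $X_1=m(1+\ell/m)^{\frac4{p-1}}$. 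Then $(m/X_1)^{\frac{p-1}{4}}=\frac{m}{m+\ell}$. Since $\frac4{p-1}>1$, Bernoulli's inequality gives $X_1\ge m+\ell$, so $\frac{\ell}{X_1}\le\frac{\ell}{m+\ell}$. Therefore $h(X_1)\le1$, and strictness should follow from strict Bernoulli when $\ell>0$. This yields the first entry of the stated maximum. The second entry, $\big(\ell^{\frac{p-1}4}+m^{\frac{p-1}4}\big)^{\frac4{p-1}}$, can be checked the same way, and since $X^*$ lies below each candidate it lies below their maximum.
\end{itemize}

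\textbf{Main obstacle.} The only delicate point is the algebra in Step 1: verifying that the coefficients in \eqref{Long_potential_deriv} combine exactly into $\ell$ and $m^{\frac{p-1}{4}}$ with the exponents $\frac{4(\alpha+1)}{p-1}$ appearing in $\mathcal M$. Once the scalar equation $h(X)=1$ is in hand, every bound is a one-line monotonicity check.
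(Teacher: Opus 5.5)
Your proof is correct. Its skeleton matches the paper's: both work from the zero equation \eqref{reorganize}, both get the lower bound by discarding one of the two positive terms on the right, and both get the $p>5$ bound from the observation that a sum of two positive terms is at most twice the larger one. Your evaluation of $h$ at $X_0=\max\{2\ell,2^{4/(p-1)}m\}$ is just the contrapositive of the paper's step.

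The real difference is in the case $1<p<5$.
\begin{itemize}
\item The paper \emph{derives} the two candidate upper bounds by a bootstrap. It feeds the lower bounds $B_{max}>\mathcal M$ and $B_{max}>\mathcal L$ into the decreasing factor $B^{\frac{p-5}{4(\alpha+1)}}$ of \eqref{reorganize}. This produces $m(1+\ell/m)^{4/(p-1)}$ and $\bigl(\ell^{(p-1)/4}+m^{(p-1)/4}\bigr)^{4/(p-1)}$ directly.
\item You instead take the candidates from the statement and \emph{verify} them by checking $h<1$ there. For the first you use $(1+\ell/m)^{4/(p-1)}>1+\ell/m$. For the second, which you left as ``the same way,'' the check does go through: it follows from $X_2>\ell$ together with $1-\tfrac{p-1}{4}>0$. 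In both checks this is exactly where $p<5$ enters.
\end{itemize}

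The paper's route explains where the candidates come from. Your normalization $h(X)=\ell/X+(m/X)^{(p-1)/4}=1$ buys more:
\begin{itemize}
\item Strict monotonicity of $h$ gives uniqueness of the critical point and shows it really is a maximum of $U$. The paper only appeals to the intermediate value theorem.
\item It shows the $p>5$ upper bound actually holds for every $p>1$.
\item It makes clear that the paper's split into $\mathcal M<\mathcal L$ versus $\mathcal L<\mathcal M$ is unnecessary. Both candidates bound $B_{max}$ unconditionally, so one could even replace the maximum of the two by their minimum.
\end{itemize}
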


\begin{proof}[Proof of Proposition \ref{ZerosOfGenP}]
    Define the function,
    \begin{equation}\label{factored}
        f(B) = -4(q-1)EB^{\frac{p-1}{4(\alpha + 1)}}  +
    \frac12 (q-5) M^2 B^{\frac{p-5}{4(\alpha+1)}} 
    + \frac{4|\epsilon_1|(q-p)}{(p+1)C^*}M^{\frac{p-1}{4} + \frac{p+1}{2}}.
    \end{equation}
    Setting \eqref{factored} equal to $0$ and reorganizing we obtain,
    \begin{equation}\label{reorganize}
        4(q-1)EB^{\frac{p-1}{4(\alpha + 1)}}  =
    \frac12 (q-5) M^2 B^{\frac{p-5}{4(\alpha+1)}} 
    + \frac{4|\epsilon_1|(q-p)}{(p+1)C^*}M^{\frac{p-1}{4} + \frac{p+1}{2}}.
    \end{equation}
    The lower bound is straightforward, note 
    \begin{equation*}
        4(q-1)EB^{\frac{p-1}{4(\alpha + 1)}}  >
    \frac12 (q-5) M^2 B^{\frac{p-5}{4(\alpha+1)}},
    \end{equation*}
    thus
    \begin{equation*}
        B_{max}  > \left(
     \frac{(q-5) M^2}{8(q-1)E}\right)^{\alpha+1} =: \mathcal{L}.
    \end{equation*}
    We could also choose,
    \begin{equation*}
        4(q-1)EB^{\frac{p-1}{4(\alpha + 1)}}  >
     \frac{4|\epsilon_1|(q-p)}{(p+1)C^*}M^{\frac{p-1}{4} + \frac{p+1}{2}},
    \end{equation*}
    thus
    \begin{equation*}
        B_{max}  > \left(
     \frac{|\epsilon_1|(q-p)M^{\frac{p-1}{4} + \frac{p+1}{2}}}{(p+1)(q-1)C^*E}\right)^{\frac{4(\alpha + 1)}{p-1}} =: \mathcal{M}.
    \end{equation*}
    Then the optimal lower bound is chosen as the largest between $\mathcal{M}$ and $\mathcal{L}$.
    
    To proceed with the upper bound we split into two cases, one where $1 < p < 5$ and the other when $ p > 5$. 

\textbf{Case 1} ($1 < p < 5$):

    Notice that $B^{\frac{p-5}{4(\alpha + 1)}}$ is a decreasing function, then if $\mathcal{M} < \mathcal{L}$ \eqref{reorganize} can be expressed as 
    \begin{equation}
        4(q-1)EB^{\frac{p-1}{4(\alpha + 1)}}  <
    \frac12 (q-5) M^2 \mathcal{M}^{\frac{p-5}{4(\alpha+1)}} 
    + \frac{4|\epsilon_1|(q-p)M^{\frac{p-1}{4} + \frac{p+1}{2}}}{(p+1)C^*},
    \end{equation}
    and
    \begin{align*}
        B_{max}  &<
    \left(\frac{(q-5) M^2}{8(q-1)E}  \mathcal{M}^{\frac{p-5}{4(\alpha+1)}} 
    + \frac{|\epsilon_1|(q-p)M^{\frac{p-1}{4} + \frac{p+1}{2}}}{(p+1)(q-1)C^*E}\right)^{\frac{4(\alpha + 1)}{p-1}}\\
    &=\mathcal{M}\left(\Big(\frac{\mathcal{L}}{\mathcal{M}}\Big)^{\frac{1}{\alpha + 1}}   
    + 1\right)^{\frac{4(\alpha + 1)}{p-1}}.
    \end{align*}
    However, if $\mathcal{L} < \mathcal{M}$,
    \begin{align*}
        B_{max}  &<
    \left(\frac{(q-5) M^2}{8(q-1)E}  \mathcal{L}^{\frac{p-5}{4(\alpha+1)}} 
    + \frac{|\epsilon_1|(q-p)M^{\frac{p-1}{4} + \frac{p+1}{2}}}{(p+1)(q-1)C^*E}\right)^{\frac{4(\alpha + 1)}{p-1}}\\
    &= \left( \mathcal{L}^{\frac{p-1}{4(\alpha+1)}} 
    + \mathcal{M}^{\frac{p-1}{4(\alpha+1)} }\right)^{\frac{4(\alpha + 1)}{p-1}}. 
    \end{align*}
    {C}hoosing the largest between the two yields the upper bound.
    {\begin{remark}
        Note, if $\mathcal{L} = \mathcal{M}$, we have $B_{max} < 2^{\frac{4(\alpha+1)}{p-1}}\mathcal{M}.$
    \end{remark}}
    \textbf{Case 2} ($5 < p < q$):
    For this case we use the maximum between the two terms on the right hand side of \eqref{reorganize} to obtain,

    \begin{equation*}
        4(q-1)EB^{\frac{p-1}{4(\alpha + 1)}}  {\leq}
    2\max\left\{\frac12 (q-5) M^2 B^{\frac{p-5}{4(\alpha+1)}} 
    , \frac{4|\epsilon_1|(q-p)M^{\frac{p-1}{4} + \frac{p+1}{2}}}{(p+1)C^*}\right\}.
    \end{equation*}
    We start by assuming the first term on the right hand side is the maximum, then

    \begin{align*}
        B_{max} {\leq}
    \left(\frac{(q-5) M^2}{4(q-1)E}\right)^{(\alpha + 1)}  = 2^{\alpha+1}\mathcal{L}.
    \end{align*}
    If instead we assume the second term on the right hand side is the maximum, we have
    \begin{align*}
        B_{max} {\leq}
    \left(\frac{2|\epsilon_1|(q-p)M^{\frac{p-1}{4} + \frac{p+1}{2}}}{(p+1)(q-1)C^*E}\right)^{\frac{4(\alpha + 1)}{p-1}}  = 2^{\frac{4(\alpha + 1)}{p-1}}\mathcal{M}.
    \end{align*}
    Taking the maximum between the two choices yields our final upper bound. Completing the proof for all cases.
\end{proof}
}

We have successfully provided a suitable range for the critical point $B_{max}$, however, to proceed with the proof it is needed to have an explicitly calculated $B_{max}$. We encourage future investigations in this range of $B_{max}$ but to stay within the scope of this paper, we continue with a different approach in \textbf{Case II.}
\medskip

\noindent\underline{\textbf{Case II:}}
For this case, notice that $- \frac{(q-5)}{2}M^2$ is constant and helps the RHS of the inequality stay negative. Thus, we bound $- \frac{(q-5)}{2}M^2 < 0$ knowing that the concluding criteria will not be as sharp. Hence,
\begin{align*}
    VV_{tt} + \frac{(q-5)}{8}V_t^2 &\leq  4(q-1)EV - \frac{4|\epsilon_1|(q-p)}{p+1}\left[\frac{M^{\frac{p-1}{4} + \frac{p+1}{2}}}{V^\frac{p-5}{4}C^*}\right].
\end{align*}

Then, using the same rescaling ($\alpha = \frac{q-5}{8}$ again) and substitutions \eqref{B},  
as in the $p = 5$ case, we obtain, after solving for $B_{tt}$

\begin{align*}
    B_{tt}  &\leq  4(\alpha + 1)(q-1)EB^{\frac{\alpha}{\alpha + 1}}  - \frac{4|\epsilon_1|(q-p)(\alpha+1)}{(p+1)C^*}M^{\frac{p-1}{4} + \frac{p+1}{2}}B^{\frac{4\alpha -p + 1}{4(\alpha+1)}}.
\end{align*}

As before we omit the motivation for the technique and only apply the methods and estimates here. Thus, we define 
\begin{equation*}
    -\frac{\partial U}{\partial B} = 4(\alpha + 1)(q-1)EB^{\frac{\alpha}{\alpha + 1}}  - \frac{4|\epsilon_1|(q-p)(\alpha+1)}{(p+1)C^*}M^{\frac{p-1}{4} + \frac{p+1}{2}}B^{\frac{4\alpha -p + 1}{4(\alpha+1)}},
\end{equation*}

and 
\begin{equation}\label{Potential_genp}
    U(B) = \frac{16|\epsilon_1|(\alpha+1)^2}{(p+1)C^*}M^{\frac{p-1}{4} + \frac{p+1}{2}}B^{\frac{q-p}{4(\alpha+1)}} - 16(\alpha+1)^2
    EB^{\frac{2\alpha+1}{\alpha + 1}}.
\end{equation}

In this case, the energy of the particle as expressed in \eqref{ParticleEnergy} is given by
\begin{equation}\label{ParticleEnergy_B_genp}
    \mathcal{E}(t) = \frac{B_t^2}{2} +\frac{16|\epsilon_1|(\alpha+1)^2}{(p+1)C^*}M^{\frac{p-1}{4} + \frac{p+1}{2}}B^{\frac{q-p}{4(\alpha+1)}} - 16(\alpha+1)^2
    EB^{\frac{q-1}{4(\alpha + 1)}},
\end{equation}

Next, we find the maximum for \eqref{Potential_genp} by setting $\frac{\partial U}{\partial B} = 0$ and solving for $B$ to obtain $B_{max}$, which corresponds to the maximum as in Figure \ref{F:PotentialEnergyGraph},
\begin{equation*}
    B_{max} = \left( \frac{|\epsilon_1|(q-p)M^{\frac{p-1}{4} + \frac{p+1}{2}}}{C^*(q-1)(p+1)E}\right)^{\frac{4(\alpha+1)}{p-1}}.
\end{equation*}
We remark that with the adjustment in this case, we are able to obtain an explicit $B_{max}$. Then 
\begin{equation*}
    U(B_{max}) = \frac{16 (\alpha+1)^2(p-1)E}{(q-p)}
    \left( \frac{|\epsilon_1|(q-p)M^{\frac{p-1}{4} + \frac{p+1}{2}}}{(q-1)(p+1)C^* E}\right)^{\frac{q-1}{p-1}}.
\end{equation*}

The energy of the particle $B$ in \eqref{ParticleEnergy_B_genp} after reversing the substitution \eqref{B} and simplifying is
\begin{align*}
    \mathcal{E}(t) = \frac12(\alpha+1)^2V^{2\alpha}V_t^2 
    + 16(\alpha+1)^2 E V^{\frac{q-1}4} \left[\frac{|\epsilon_1| M^{\frac{p-1}{4} + \frac{p+1}{2}}}{(p+1)C^*E} V^{\frac{1-p}{4}}
    -  1 \right].
\end{align*}

Thus, our explicit criterion is
 {\small
    \begin{align}
        \frac1{2}V[u_0]^{2\alpha}V_t(0)^2 + 16 E V{[u_0]}^{\frac{q-1}4}\left[\frac{|\epsilon_1|M^{\frac{p-1}{4} + \frac{p+1}{2}}}{(p+1)C^*E}V[u_0]^{\frac{1-p}{4}} - {1}\right] &< \frac{16(p-1)E}{(q-p)}\left( \frac{|\epsilon_1|(q-p)M^{\frac{p-1}{4} + \frac{p+1}{2}}}{(q-1)(p+1)C^*E}\right)^{\frac{q-1}{p-1}}, \tag{D.1}\label{BupA1_genp}\\
    \tag{D.2}\label{BupA2_genp}
  \mbox{with} \quad      V[u_0] &< \left( \frac{|\epsilon_1|(q-p)M^{\frac{p-1}{4} + \frac{p+1}{2}}}{(q-1)(p+1)C^*E}\right)^{\frac{4}{p-1}}
    \end{align} 
    }
{\small    
\begin{align}
        \frac{1}{2}V[u_0]^{2\alpha}V_t(0)^2 + 16 EV{[u_0]}^{\frac{q-1}{4}}\left[{\frac{|\epsilon_1|M^{\frac{p-1}{4} + \frac{p+1}{2}}}{(p+1)C^*E}V[u_0]^{\frac{1-p}{4}}} - 1\right] &> \frac{16(p-1)E}{(q-p)}\left( \frac{|\epsilon_1|(q-p)M^{\frac{p-1}{4} + \frac{p+1}{2}}}{(q-1)(p+1)C^*E}\right)^{\frac{q-1}{p-1}}\tag{E.1}\label{BupB1_genp} \\
    \tag{E.2}\label{BupB2_genp}
 \mbox{with} \quad    V{[u_0]^\alpha} V_t{(0)} &< 0,
    \end{align}
}
and
{\small    
\begin{align}
        \frac{1}{2}V[u_0]^{2\alpha}V_t(0)^2 + 16 EV[u_0]^{\frac{q-1}{4}}\left[\frac{|\epsilon_1|M^{\frac{p-1}{4} + \frac{p+1}{2}}}{(p+1)C^*E}V[u_0]^{\frac{1-p}{4}} - 1\right] &= \frac{16(p-1)E}{(q-p)}\left( \frac{|\epsilon_1|(q-p)M^{\frac{p-1}{4} + \frac{p+1}{2}}}{(q-1)(p+1)C^*E}\right)^{\frac{q-1}{p-1}}\tag{F.1}\label{BupC1_genp} \\
    \tag{F.2}\label{BupC2_genp}
 \mbox{with} \quad V[u_0] < \left( \frac{|\epsilon_1|(q-p)M^{\frac{p-1}{4} + \frac{p+1}{2}}}{(q-1)(p+1)C^*E}\right)^{\frac{4}{p-1}} &\mbox{ and }    V[u_0]^\alpha V_t{(0)} < 0.
    \end{align}
}

To encapsulate and simplify the criterion, similar to the case for $p=5$, we make the substitution 
\begin{equation*}
B = B_{max}\, b(s) \quad \mbox{with} \quad s = \gamma t, \quad \mbox{where} \quad  
    { \gamma = 4\sqrt{2E} \left[\frac{C^*(q-1)(p+1)E}{|\epsilon_1|(q-p)M^{\frac{p-1}{4} + \frac{p+1}{2}}}\right]^{\frac{2}{p-1}}}
\end{equation*}
Thus, after some simplification \eqref{ParticleEnergy_B_genp} is 
{
\begin{equation*}
    \mathcal{E}(t) = U(B_{max})\left[\frac{64(q-p)}{(p-1)(q+3)^2}b_s^2 + \frac{q-1}{p-1}b^{\frac{2(q-p)}{q+3}} - \frac{q-p}{p-1}b^{\frac{2(q-1)}{q+3}}\right].
\end{equation*}
}
Define
$\tilde{U}(b) = \frac{q-1}{p-1}b^{\frac{2(q-p)}{q+3}} - \frac{q-p}{p-1}b^{\frac{2(q-1)}{q+3}}${,}then the energy for the re-scaled particle $b$ is
{
\begin{equation*}
    \tilde{\mathcal{E}}(s) = \frac{64(q-p)}{(p-1)(q+3)^2}b_s^2 + \tilde{U}(b).
\end{equation*}
}
Next, let $\tilde{V} = b^\frac{1}{\alpha+1}$ and $b_s  = (\alpha+1)\tilde{V}^\alpha \tilde{V}_s$, then
{
\begin{equation*}
    \tilde{\mathcal{E}}(s) = \frac{q-p}{(p-1)}\tilde{V}^{\frac{q-5}{4}}\left(\tilde{V}_s^2 - \tilde{V} + \frac{q-1}{q-p}\tilde{V}^{\frac{5-p}{4}} \right).
\end{equation*}
}

Define the function 
\begin{equation*}
    f(X) = \sqrt{\frac{p-1}{q-p}X^{-\frac{q-5}4} + X - \frac{q-1}{q-p}X^{\frac{5-p}{4}}}.
\end{equation*}

Then the criteria \eqref{BupA1_genp}-\eqref{BupA2_genp}{, \eqref{BupB1_genp}-\eqref{BupB2_genp}, and \eqref{BupC1_genp}-\eqref{BupC2_genp}} are formulated as follows:
\begin{center}
{\begin{varwidth}{\textwidth}
    \begin{enumerate}
        \item[(A)] $\tilde{\mathcal E}(0) < 1 \; \iff \; |\tilde{V}_s(0)| < f(\tilde{V}(0))$
        \item[(B)] $\tilde{\mathcal E}(0) > 1 \; \iff \; |\tilde{V}_s(0)| > f(\tilde{V}(0))$
        \item[(C)] $\tilde{\mathcal E}(0) = 1 \; \iff \; |\tilde{V}_s(0)| = f(\tilde{V}(0))$,
    \end{enumerate}
\end{varwidth}}
\end{center}
which is equivalent to 
\begin{center}
\begin{varwidth}{\textwidth}
    \begin{enumerate}
        \item $\tilde{V}(0) < 1$ and $-f(\tilde{V}(0)) < \tilde{V}_s(0) < f(\tilde{V}(0))$
        \item $\tilde{V}_s(0) {<} -f(\tilde{V}(0))$,
        {\item $\tilde{V}(0) < 1$ and $\tilde{V}_s(0) =-f(\tilde{V}(0))$.}
    \end{enumerate}
\end{varwidth}
\end{center}
and can be expressed as
\begin{equation*}
    \tilde{V}_s(0) < \begin{cases}
        f(\tilde{V}(0)) & \text{if} \; \tilde{V}(0) < 1 \\
        -f(\tilde{V}(0)) & \text{if} \; \tilde{V}(0) \geq 1.
    \end{cases}
\end{equation*}

Working backwards from our substitutions, we obtain

{
\begin{equation*}
    V = \left( \frac{|\epsilon_1|(q-p)M^{\frac{p-1}{4} + \frac{p+1}{2}}}{(p+1)(q-1)C^*E}\right)^{\frac{4}{p-1}}\tilde{V}\left( 4\sqrt{2E}\left( \frac{|\epsilon_1|(q-p)M^{\frac{p-1}{4} + \frac{p+1}{2}}}{(p+1)(q-1)C^*E}\right)^{\frac{-2}{p-1}}t \right),
\end{equation*}
}
and 

{
\begin{equation*}
    V_t = 4\sqrt{2}\left( \frac{|\epsilon_1|(q-p)}{(p+1)(q-1)C^*}\right)^{\frac{2}{p-1}}{M^{\frac{3p+1}{2(p-1)}}}E^{\frac{p-5}{2(p-1)}}\tilde{V}_s\left( s \right),
\end{equation*}
}

Therefore, our final criterion is
{
\begin{equation*}
    \frac{V_t(0)}{\left( \frac{|\epsilon_1|(q-p)}{(p+1)(q-1)C^*}\right)^{\frac{2}{p-1}}{M^{\frac{3p+1}{2(p-1)}}}} < 4\sqrt{2E^{\frac{p-5}{p-1}}}g\left( \left( \frac{(p+1)(q-1)C^*E}{|\epsilon_1|(q-p)M^{\frac{p-1}{4} + \frac{p+1}{2}}}\right)^{\frac{4}{p-1}}V[u_0] \right),
\end{equation*}
}

where
\begin{equation}
g(X)=
    \begin{cases}
        f(X) & \text{if } 0<X< 1\\
        -f(X) & \text{if } X\geq1
    \end{cases}
\end{equation}
which {concludes} the proof of Theorem \ref{DR_Bup}.

\begin{remark}
    It would be interesting to see how this method can be applied to the case of triple nonlinearities. It may be possible with different signs in the coefficients to use techniques from the proof of Theorem \ref{NegE_Bup_3term} and apply them to this method.
\end{remark}

\section{Examples}\label{S:Examples}
We conclude this paper with the section that focuses on several explicit examples that satisfy the negative energy requirement with a quadratic phase and the positive energy blow-up criterion.

\subsection{Negative energy examples}
\subsubsection{Gaussian}\label{S:NegE_Gaussian}
In \eqref{E:InitialDataNegE}, let $v_0 = e^{i\theta x^2}Ae^{-\frac{x^2}{2}}$ with $A>0$. Then, $\partial_x(v_0) = (2i\theta x - x)v_0$. Inserting into \eqref{E:InitialDataNegE} we obtain
\begin{equation*}
    \frac{A^2|b|}{4}\int_{-\infty}^\infty|x e^{\frac{-x^2}{2}}|^2dx < \Im \left(A^2\int_{-\infty}^{\infty}(2i\theta - 1)|xe^{\frac{-x^2}{2}}|^2dx\right), 
\end{equation*}
where we have used the fact that $|e^{i\theta}| = 1$ and $v_0 \overline{v_0} = |v_0|^2$. Canceling $A^2$ from both sides and taking the imaginary part, yields 
\begin{equation}
    \frac{|b|}{4}\int_{-\infty}^\infty|x e^{\frac{-x^2}{2}}|^2dx < 2\theta\int_{-\infty}^{\infty}|xe^{\frac{-x^2}{2}}|^2dx. 
\end{equation}
Thus, we are left with the condition $\frac{|b|}{8} < \theta.$

We next verify that $E[v_0] < 0$ under specific conditions on the parameters $\theta$ and $A$:
\begin{align*}
    E[v_0] &= \frac{1}{2}\int_{-\infty}^\infty|\partial_x v_0|^2dx -\sum_{k=0}^{\infty}\frac{d_k}{\alpha_k+2} \int_{-\infty}^\infty|v_0|^{\alpha_k + 2} \, dx \\
    &= (4\theta^2 + 1)A^2\int_{-\infty}^\infty x^2e^{-x^2}dx -\sum_{k=0}^{\infty}\frac{d_k}{\alpha_k+2} \int_{-\infty}^\infty|Ae^{-\frac{x^2}{2}}|^{\alpha_k + 2} dx \\
    &= \frac{\sqrt{\pi}(4\theta^2 + 1)A^2}{2} - \sqrt{2\pi} \sum_{k=0}^{\infty}\frac{d_k A^{\alpha_k + 2}}{(\alpha_k+2)^\frac{3}{2}}. 
\end{align*}

Our goal is then to establish conditions on $\theta$, $A$, $\alpha_k$ and $d_k$ such that the following inequality holds
\begin{equation}\label{SubCondition2}
    (4\theta^2 + 1) <  2\sqrt2\sum_{k=0}^{\infty}\frac{d_k A^{\alpha_k}}{(\alpha_k+2)^\frac{3}{2}}. 
\end{equation}

Depending on the coefficients $d_k$  and $\alpha_k$, condition \eqref{SubCondition2} can be satisfied by choosing $\theta$ and $A$ appropriately.

\textbf{Three Terms Case:}
One may choose any of the combinations presented in Theorem \ref{NegE_Bup_3term} to obtain an example of initial data that blow up in finite time. For simplicity we choose an example that matches \textbf{Case 1}, say, $\lambda_1 = \lambda_2 = 1, \lambda_3 = -1$ and $\alpha_1 = 3, \alpha_2 = 5,  \alpha_3 = 7$. Then \eqref{SubCondition2} becomes
\begin{equation}\label{SubCondition2_combined}
    (4\theta^2 + 1) <  2\sqrt2A^3\left(\frac{ -1}{5^{\frac{3}{2}}} - \frac{ A^2}{7^{\frac{3}{2}}} + \frac{ A^4}{9^{\frac{3}{2}}}\right). 
\end{equation}
The range of acceptable $\theta$ and $A$ is given by Figure \ref{fig:combined_specific_example}.
\begin{figure}[htb!]
    \centering
    \begin{minipage}{.4\textwidth}
        \centering    \includegraphics[width=0.6\linewidth]{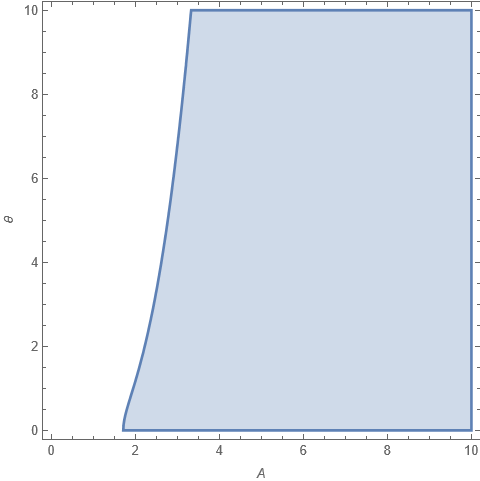}
        \captionof{figure}{\small The inequality \eqref{SubCondition2_combined} holds inside the blue area, with $\theta \in (0,10)$ and $A \in (0,10)$.}
        \label{fig:combined_specific_example}
    \end{minipage}%
    \begin{minipage}{.5\textwidth}
        \centering    \includegraphics[width=0.7\linewidth]{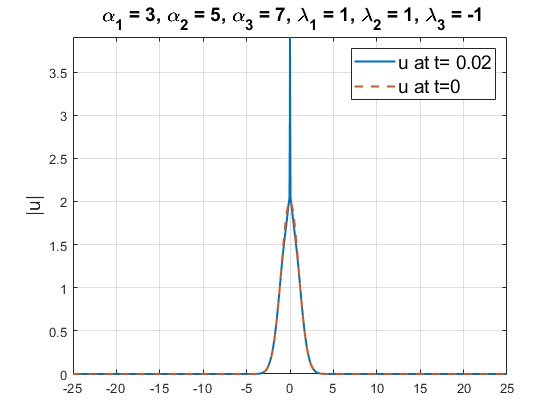}
        \captionof{figure}{\small Blow-up profile at $t=0.02$ with $A = 2$ and $\theta = 0.1$, where the initial condition is of the form $u_0 = e^{i\theta x^2}Ae^{-\frac{x^2}{2}}.$ }
        \label{fig:Blowup_three_term}
    \end{minipage}
\end{figure}
We mention that several cases of combined NLS are studied in \cite{RRR2024} (up to double nonlinearity). This case is not covered in \cite{RRR2024}; we choose $\theta = 0.1$ and $A = 2$ to show a specific simulation, given in Figure \ref{fig:Blowup_three_term}. It suggests a finite time blow-up, confirming the criterion in Theorem \ref{NegE_Bup_3term}. 

\smallskip

\textbf{Exponential Case:}
One can choose the specific case of the exponential, where $d_k = \frac{1}{k!}$ and $\alpha_k = k$. Which corresponds to the criteria of Theorem \ref{NegE_Bup_Exp}. Then \eqref{SubCondition2} becomes
\begin{equation}\label{SubCondition2_exp}
    (4\theta^2 + 1) <  2\sqrt2\sum_{k=0}^{\infty}\frac{ A^{k}}{k!(k+2)^\frac{3}{2}}. 
\end{equation}

An estimated range of $\theta$ and $A$ can be computed numerically, for example, taking the sum to $N = 100$ we obtain the range shown in Figure \ref{fig:gaussian_specific_example}.
\begin{figure}[htb!]
    \centering
    \begin{minipage}{.5\textwidth}
        \centering    \includegraphics[width=0.5\linewidth]{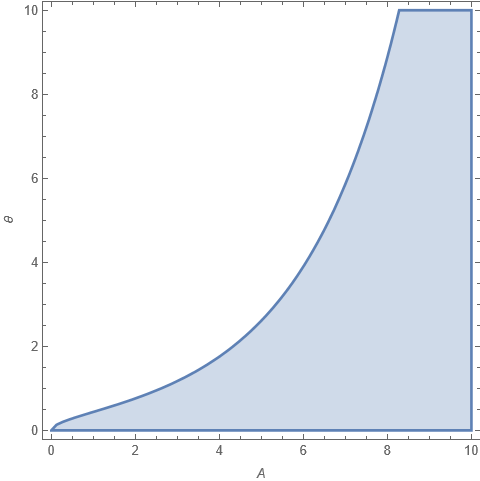}
        \captionof{figure}{\footnotesize The inequality \eqref{SubCondition2_exp} holds inside the blue area, with $\theta \in (0,10)$ and $A \in (0,10)$.}
        \label{fig:gaussian_specific_example}
    \end{minipage}%
    \begin{minipage}{.5\textwidth}
        \centering    \includegraphics[width=0.7\linewidth]{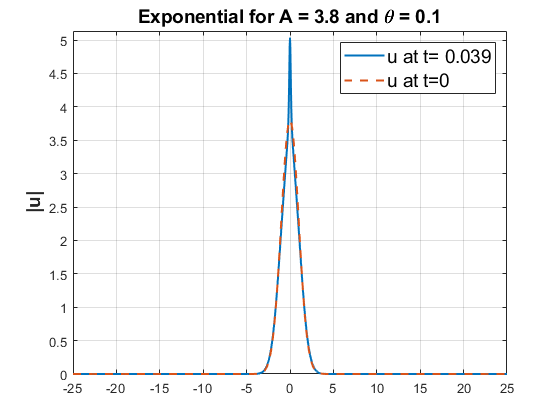}
        \captionof{figure}{\footnotesize Blow-up profile with $A = 3.8$ and $\theta = 0.1$, where the initial condition is of the form $u_0 = e^{i\theta x^2}Ae^{-\frac{x^2}{2}}.$ }
        \label{fig:Blowup_exp}
    \end{minipage}
\end{figure}
\newpage
We take $\theta = 0.1$ and $A = 3.8$ to show a specific simulation of the time evolution of the initial condition $u_0 = e^{i\theta x^2}Ae^{-\frac{x^2}{2}}$ in the exponential NLS, see Figure \ref{fig:Blowup_exp}. It suggests a finite time blow-up, confirming the criterion in Theorem \ref{NegE_Bup_Exp}.

\subsubsection{Weighted example}\label{S:NegE_Weighted}
We now investigate an example that lives inside a particular weighted Sobolev space $\mathcal X$ (see definition in \cite{RRR2024}) with data that have polynomial decay at infinity. In \eqref{E:InitialDataNegE}, choose 
$$v_0 = \frac{A}{\langle x \rangle^n}e^{\frac{-i\theta}{\langle x \rangle}}$$ 
with $\partial_x(v_0) = \frac{2Ai\theta x }{\langle x \rangle^{n+3}} e^{\frac{-i\theta}{\langle x \rangle}} - \frac{nAx }{\langle x \rangle^{n+2}}e^{\frac{-i\theta}{\langle x \rangle}}$, where $\langle x \rangle = (1+x^2)^{\frac{1}{2}}$. We go through a familiar procedure and write

\begin{align*}
     \Im \int_{-\infty}^\infty x \partial_x (v_0(x))\overline{v_0(x)}dx &= \Im \int_{-\infty}^\infty x \left(\frac{2Ai\theta x e^{\frac{-i\theta}{\langle x \rangle}}}{\langle x \rangle^{n+3}} - \frac{nA x e^{\frac{-i\theta}{\langle x \rangle}}}{\langle x \rangle^{n+2}}\right) \left(\frac{Ae^{\frac{i\theta}{\langle x \rangle}}}{\langle x \rangle^n}\right)dx \\
     &= \Im A^2 \int_{-\infty}^\infty x \left(\frac{2i\theta x }{\langle x \rangle^{2n+3}} - \frac{n x }{\langle x \rangle^{2n+2}}\right)dx \\
     &= A^2\int_{-\infty}^\infty x \left(\frac{2\theta x }{\langle x \rangle^{2n+3}}\right)dx = 2\theta A^2 \int_{-\infty}^\infty  \frac{ x^2 }{\langle x \rangle^{2n+3}}dx,
\end{align*}
and
\begin{align*}
    \frac{|b|}{4}\int_{-\infty}^\infty|x v_0(x)|^2dx &= \frac{|b|A^2}{4}\int_{-\infty}^\infty\left| \frac{xe^{\frac{-i\theta}{\langle x \rangle}}}{\langle x \rangle^n}\right|^2dx = \frac{|b|A^2}{4}\int_{-\infty}^\infty \frac{x^2}{\langle x \rangle^{2n}}dx.
\end{align*}
For \eqref{E:NegE_requirement} we require $n > \frac{3}{2}$.
Then,
\begin{equation*}
    \frac{|b|}{4}\int_{-\infty}^\infty \frac{x^2}{\langle x \rangle^{2n}}dx < 2\theta \int_{-\infty}^\infty  \frac{ x^2 }{\langle x \rangle^{2n+3}}dx,
\end{equation*}
which can be written as
\begin{equation*}
    \frac{|b|\sqrt{\pi}\Gamma\left(n - \frac{3}{2}\right)}{8\Gamma\left(n\right)} <   \frac{\theta\sqrt{\pi}\Gamma\left(n\right)}{\Gamma\left(\frac{3}{2} + n\right)}.
\end{equation*}
Thus, we pick $\theta$ such that
\begin{equation*}
    \frac{|b|\Gamma\left(\frac{3}{2} + n\right)\Gamma\left(n - \frac{3}{2}\right)}{8\Gamma\left(n\right)^2} <   \theta.
\end{equation*}
Notice when $n=2$
\begin{equation*}
    \frac{|b|\pi}{8} < \frac{\theta}{2} \implies \theta > \frac{|b|\pi}{4}.
\end{equation*}
Now we would like to verify that the energy $E[v_0]$ is in fact negative. Recall
\begin{align*}
    E[v_0] = \frac{1}{2}\int_{-\infty}^\infty|\partial_x v_0|^2dx -\sum_{k=0}^{\infty}\frac{d_k}{\alpha_k+2} \int_{-\infty}^\infty|v_0|^{\alpha_k + 2} \, dx.
\end{align*}
Looking at it term by term, we get
\begin{align*}
    \frac{A^2}{2}\int_{-\infty}^\infty\left|\frac{2i\theta x e^{\frac{-i\theta}{\langle x \rangle}}}{\langle x \rangle^{n+3}} - \frac{n x e^{\frac{-i\theta}{\langle x \rangle}}}{\langle x \rangle^{n+2}}\right|^2dx &= \frac{A^2}{2}\int_{-\infty}^\infty\frac{x^2}{\langle x \rangle^{2n+4}}\left(\frac{4\theta^2}{\langle x \rangle^2} + n^2\right)dx \\
    &= 2\theta^2A^2 \int_{-\infty}^\infty \frac{x^2}{\langle x \rangle^{2n+6}}dx + \frac{n^2A^2}{2} \int_{-\infty}^\infty \frac{x^2}{\langle x \rangle^{2n+4}}dx
\end{align*}
and
\begin{align*}
    \sum_{k=0}^{\infty}\frac{d_k}{\alpha_k+2} \int_{-\infty}^\infty|v_0|^{\alpha_k + 2} \, dx &= \sum_{k=0}^{\infty}\frac{d_kA^{\alpha_k + 2}}{\alpha_k+2} \int_{-\infty}^\infty\left|\frac{e^{\frac{-i\theta}{\langle x \rangle}}}{\langle x \rangle^n}\right|^{\alpha_k + 2} \, dx \\
    &= \sum_{k=0}^{\infty}\frac{d_kA^{\alpha_k + 2}}{\alpha_k+2} \int_{-\infty}^\infty\frac{1}{\langle x \rangle^{n(\alpha_k + 2)}} \, dx.
\end{align*}
From \eqref{E:NegE_requirement} we obtain the following condition
\begin{align*}
    2\theta^2 \int_{-\infty}^\infty \frac{x^2}{\langle x \rangle^{2n+6}}dx + \frac{n^2}{2} \int_{-\infty}^\infty \frac{x^2}{\langle x \rangle^{2n+4}}dx < \sum_{k=0}^{\infty}\frac{d_kA^{\alpha_k}}{\alpha_k+2} \int_{-\infty}^\infty\frac{1}{\langle x \rangle^{n(\alpha_k + 2)}} \, dx,
\end{align*}
which simplifies to (for $n(\alpha_k + 2) > 1$):
\begin{align*}
    2\sqrt{\pi}\theta^2 \frac{\Gamma\left(\frac{3}{2} + n\right)}{\Gamma\left(3 + n\right)} + \frac{n^2\sqrt{\pi}}{4} \frac{\Gamma\left(\frac{1}{2} + n\right)}{\Gamma\left(2 + n\right)} < \sqrt{\pi}\sum_{k=0}^{\infty}\frac{d_kA^{\alpha_k}}{\alpha_k+2} \frac{\Gamma\left(\frac{(n(\alpha_k + 2)) -1}{2}\right)}{\Gamma\left(\frac{n(\alpha_k + 2)}{2}\right)}.
\end{align*}
\textbf{Exponential case:}  Fix $n=2$ and choose $d_k = \frac{1}{k!}, \alpha_k  = k$ (which corresponds to the criteria of Theorem \ref{NegE_Bup_Exp}.), we have
\begin{align}\label{Weighted_specific_example}
    \frac{5\pi\theta^2+ 8\pi}{64} < \sqrt{\pi}\sum_{k=0}^{\infty}\frac{A^k}{k!(k+2)} \frac{\Gamma\left(\frac{3}{2} + k\right)}{\Gamma\left(3 + k\right)}.
\end{align}
\begin{figure}[htb!]
    \centering
    \begin{minipage}{.5\textwidth}
        \centering    \includegraphics[width=0.5\linewidth]{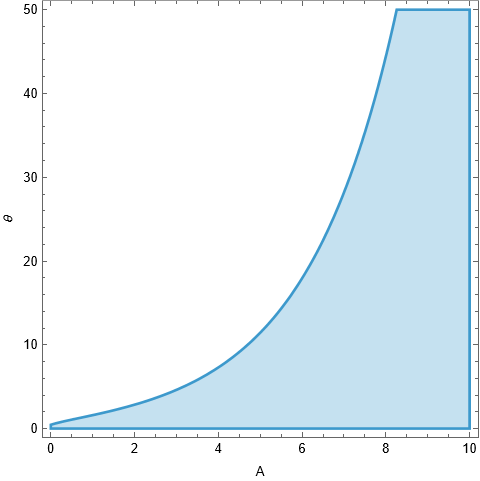}
        \captionof{figure}{\footnotesize Visualization of criteria \eqref{Weighted_specific_example}, where $\theta \in (0,50)$ and $A \in (0,10)$. The inequality \eqref{Weighted_specific_example} holds inside the blue area.}
        \label{fig:Weighted_specific_example}
    \end{minipage}%
    \begin{minipage}{.5\textwidth}
        \centering    \includegraphics[width=0.75\linewidth]{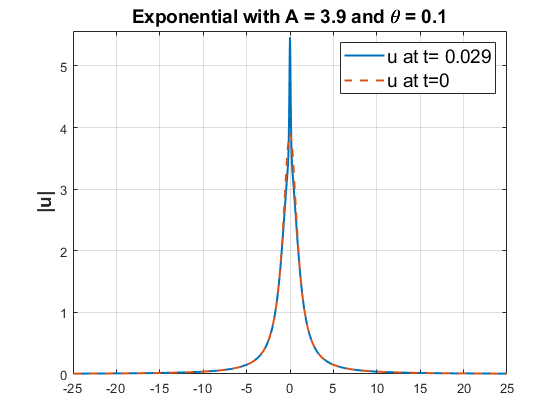}
        \captionof{figure}{\footnotesize Blow-up profile with $A = 3.9$ and $\theta = 0.1$, where the initial condition is of the form $u_0 = A\frac{e^{\frac{-i\theta}{\langle x \rangle}}}{\langle x \rangle^n}.$ }
        \label{fig:Blowup_exp_weighted}
    \end{minipage}
\end{figure}

Note that the RHS of \eqref{Weighted_specific_example} can be interpreted as a function of $A$ that grows as fast as an exponential. The LHS only grows as fast as $\theta^2$, thus, one can choose parameters $\theta$ and $A$ such that \eqref{Weighted_specific_example} holds. Numerically, one can achieve an estimated range (instead of an infinite sum we take it up to $N = 100$) that verifies this assertion, see Figure \ref{fig:Weighted_specific_example}. In Figure \ref{fig:Blowup_exp_weighted} we show an example of a solution with parameters $\theta=0.1$ and $A=3.9$. For more numerical simulations involving the exponential and combined nonlinearity, see \cite[Section 6]{RRR2024}.

\newpage
\subsection{Positive energy examples}

{\subsubsection{Gaussian}\label{S:PosE_Gaussian}
Let $u(x)  = A e^{-x^2}$ with $A > 0$.
Calculating the mass and variance, we get
\begin{equation}\label{E:PosE_Mass_Variance}
    M = \frac{A^2\sqrt{\pi}}{\sqrt{2}} \mbox{ and } V = \frac{A^2\sqrt{\pi}}{2^{\frac{5}{2}}}
\end{equation}
To show a clear example of the blow-up criteria in Theorem \ref{DR_Bup}, for this kind of initial condition, we fix $p =5$, $q = 7$, $\epsilon_1 = -1$ and $\epsilon_2 = 1$ in \eqref{CombinedNLS}. Then the energy is given by
\begin{equation}\label{E:PosE_p5q7}
    E = \frac{A^2\sqrt{\pi}}{2\sqrt{2}} + \frac{A^6\sqrt{\pi}}{6\sqrt{6}} - \frac{A^8\sqrt{\pi}}{16\sqrt{2}}.
\end{equation}

 Then, for real-valued data, the criteria in Corollary \ref{Cor:RealData} (for $p=5$ and $q = 7$) is 
\begin{equation}\label{Pos_E_bup_realdata}
   V[u_0] < \frac{M^2\left[1 + \frac{4|\epsilon_1|M^2}{3C^*}\right]}{24E}, 
\end{equation}
with $C^*$ as defined in \eqref{DRIneqConst}.

Then substituting \eqref{E:PosE_p5q7} and \eqref{E:PosE_Mass_Variance} into \eqref{Pos_E_bup_realdata}, we obtain (see Figures \ref{fig:p5q7_criteria} and \ref{fig:Blowup_posE_gaussain}) 

\begin{figure}[htb!]
    \centering
    \begin{minipage}{.45\linewidth}
        \centering    \includegraphics[width=0.5\linewidth]{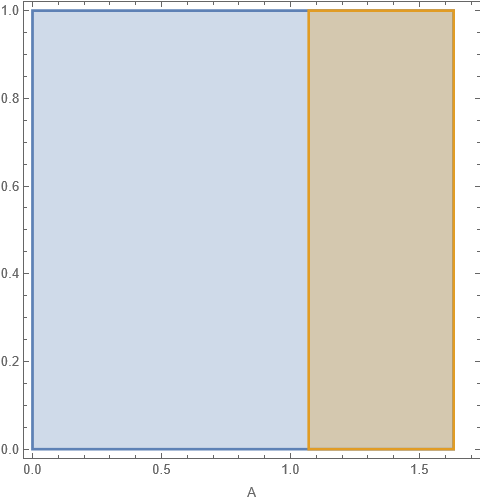}
        \captionof{figure}{\footnotesize The blue area shows the range, for which $A$ ensures positive energy (i.e., \eqref{E:PosE_p5q7} $> 0$), while the orange area (which covers the blue and extends further out) shows the range of $A$ such that \eqref{Pos_E_bup_realdata} holds. Notice the orange region has overlap.  }
        \label{fig:p5q7_criteria}
    \end{minipage}%
    \begin{minipage}{.45\textwidth}
        \centering    \includegraphics[width=0.8\linewidth]{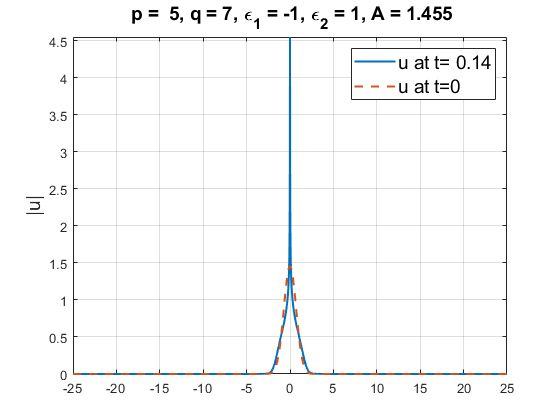}
        \captionof{figure}{\footnotesize Blow-up profile with $A = 1.455$ where the initial condition is of the form $u_0 = Ae^{-x^2}.$ }
    \label{fig:Blowup_posE_gaussain}
    \end{minipage}
\end{figure}

}
{

\subsubsection{Polynomial decaying data }\label{S:PosE_Polynomial}
In this example we take initial data $u(x) = \dfrac{A}{1+x^2}$, where $A > 0$. Deriving the mass, energy and variance, we get
\begin{equation}\label{M_V_poly}
    M = \frac{A^2\pi}{2} = V, \quad
\end{equation}

For this example, which corresponds to the criteria in Theorem \ref{DR_Bup}, we take $\epsilon_1 = -1$, $\epsilon_2 = 1$, $p = 5$ and $q = 7$, then the energy is given by
\begin{equation}\label{E_poly}
    E = \frac{A^2\pi}{8}\left( 1 + \frac{21A^{4}}{64} - \frac{429A^{6}}{2048}\right).
\end{equation}

Substituting \eqref{M_V_poly} and \eqref{E_poly} 
into \eqref{Pos_E_bup_realdata}, we obtain (See Figures \ref{fig:poly_criteria} and \ref{fig:Blowup_posE_weighted})

\begin{figure}[htb!]
    \centering
    \begin{minipage}{.45\textwidth}
        \centering    \includegraphics[width=0.5\linewidth]{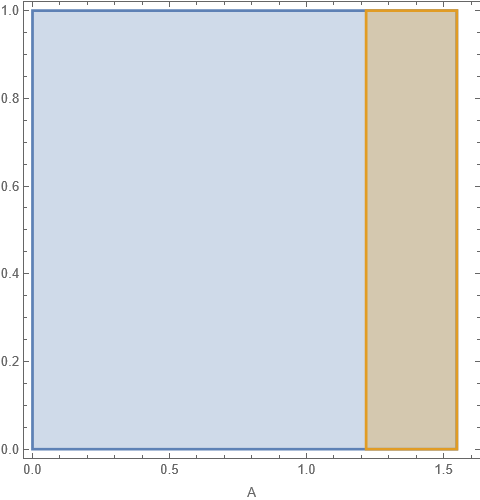}
        \captionof{figure}{\footnotesize The blue area shows the range for which $A$ ensures positive energy (i.e., \eqref{E_poly}$>0$), while the orange area (on top of blue) shows the range of $A$ such that \eqref{Pos_E_bup_realdata} holds.  }
        \label{fig:poly_criteria}
    \end{minipage}%
    \begin{minipage}{.45\textwidth}
        \centering    \includegraphics[width=0.8\linewidth]{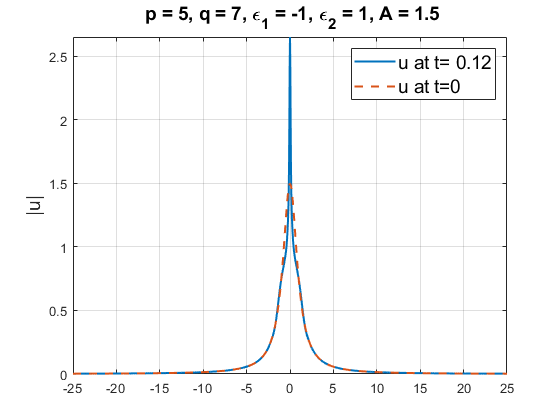}
        \captionof{figure}{\footnotesize Blow-up profile with $A = 1.5$ where the initial condition is of the form $u_0 = \frac{A}{1+x^2}.$ }
    \label{fig:Blowup_posE_weighted}
    \end{minipage}
\end{figure}

}

{\bf Acknowledgments.}
A.D.R. was partially supported by NSF grants DMS-2055130, 2221491, 2452782. The author would like to thank Svetlana Roudenko for numerous insightful conversations and suggestions related to this work.
\smallskip

{\bf Conflict of Interest:} The author declares that they have no conflicts of interest.
\smallskip

{\bf Data Availability}

Data sharing is not applicable to this article, as no data sets were generated or analyzed. 

\bigskip

{\bf ORCID}

{\it Alex D. Rodriguez}
\qquad {https://orcid.org/0000-0003-4382-7430}

\bibliographystyle{abbrv} 
\bibliography{main.bib}
\end{document}